\documentclass[12pt]{amsart}
\counterwithin{equation}{section}

\allowdisplaybreaks

\usepackage{amsmath,amssymb,amsbsy,amsthm,amscd}
\usepackage[all]{xy}
\usepackage{tikz}
\usepackage{mathrsfs}
\usepackage{graphicx}
\usepackage{enumerate}
	\definecolor{linkred}{rgb}{0.7,0.2,0.2}
	\definecolor{linkblue}{rgb}{0,0.2,0.6}
	\definecolor{linkgreen}{rgb}{0,0.6,0.2}
\usepackage[colorlinks,plainpages,
    linkcolor=linkblue,
    citecolor=linkgreen,
    urlcolor=linkred]{hyperref}

\newtheorem{theorem}{Theorem}[section]
\newtheorem{definition}[theorem]{Definition}
\newtheorem{lemma}[theorem]{Lemma}

\newtheorem{prop}[theorem]{Proposition}

\newtheorem{example}[theorem]{Example}
\newtheorem{remark}[theorem]{Remark}

\newcommand{\Ui}{\mathbf{U}^{\imath}}
\newcommand{\Sym}{{\mathrm{Sym}}}
\newcommand{\GL}{{\mathrm{GL}}}
\newcommand{\SO}{{\mathrm{SO}}}
\newcommand{\Sp}{{\mathrm{Sp}}}

\newcommand{\calD}{{\mathcal{D}}}
\newcommand{\calK}{{\mathcal{K}}}
\newcommand{\calO}{{\mathcal{O}}}
\newcommand{\calR}{{\mathcal{R}}}
\newcommand{\bfN}{{\mathbf{N}}}
\newcommand{\bGr}{\overline{\Gr}}
\newcommand{\pt}{\mathsf{pt}}

\DeclareMathOperator{\Hom}{Hom}
\DeclareMathOperator{\Gr}{Gr}
\DeclareMathOperator{\Eu}{Eu}
\DeclareMathOperator{\Res}{Res}
\DeclareMathOperator{\loc}{loc}

\begin{document}

\title[$\mathrm{i}$Quantum groups and Coulomb Branches]{Quivers with Involutions and Shifted affine $\mathrm{i}$Quantum groups via Coulomb Branches}

\author{Yaolong Shen}
\address[Yaolong Shen]{School of Mathematical Sciences, Key Laboratory of MEA (Ministry of Education)
\& Shanghai Key Laboratory of PMMP, East China Normal University, Shanghai 200241,
China}
\email{ylshen@math.ecnu.edu.cn}

\author{Changjian Su}
\address[Changjian Su]{Yau Mathematical Sciences Center, Tsinghua University, Beijing, China}
\email{changjiansu@mail.tsinghua.edu.cn}

\author{Zichang Wang}
\address[Zichang Wang]{Qiuzhen College, Tsinghua University, Beijing, China, 100084}
\email{w-zc21@mails.tsinghua.edu.cn}

\author{Rui Xiong}
\address[Rui Xiong]{Department of Mathematics, University of Michigan, Ann Arbor, MI 48109}
\email{xiongrui@umich.edu}

\keywords{Coulomb branch, $\mathrm{i}$quantum group, equivariant K-theory}
\subjclass[2020]{Primary 17B37; Secondary 14D21, 19L47}

\begin{abstract}
We give a formula for quasi-minuscule monopole operators in $K$-theoretic quantized Coulomb branch algebras of cotangent type. We then study signed quivers with involutions, with orthogonal or symplectic gauge factors at the involution-fixed vertices. We construct a GKLO-type homomorphism from the corresponding shifted affine $\mathrm{i}$quantum group to an algebra of $q$-difference operators and prove that it factors through the quantized Coulomb branch algebra. These homomorphisms give geometric realizations of the shifted affine $\mathrm{i}$quantum groups in terms of dressed minuscule and quasi-minuscule monopole operators.
\end{abstract}

\maketitle
%\tableofcontents

\section{Introduction}

Quantized Coulomb branch algebras provide a geometric interpretation of difference-operator representations of quantum algebras. For a complex reductive group $G$ and a representation $\bfN$, the Braverman--Finkelberg--Nakajima construction equips the equivariant $K$-homology of the space $\calR_{G,\bfN}$ with a convolution product \cite{NakCou1,BFN18}. Loop rotation gives the quantization parameter $q$, and equivariant localization embeds the resulting algebra $\mathcal A_q(G,\bfN)$ into an algebra of $q$-difference operators. Under this embedding, monopole classes have explicit formulas in terms of the weights of the gauge and matter representations. For ordinary quiver gauge theories, these operators give geometric realizations of shifted Yangians/quantum affine algebras \cite{BFN19,FT19}. The purpose of this paper is to develop the corresponding constructions for shifted affine $\mathrm{i}$quantum groups arising from quivers with involutions.

A Coulomb branch realization requires two ingredients. One must find difference operators satisfying the defining relations of the quantum algebra, and one must identify those operators with classes in equivariant $K$-homology. The latter step imposes a geometric condition on formulas that are initially defined in a localization. For the quivers considered here, dressed minuscule monopole classes account for the generators at general linear and orthogonal gauge factors. A symplectic gauge factor requires a quasi-minuscule cocharacter. The corresponding affine Grassmannian orbit closure has an additional stratum, whose contribution must be included in the operator. This is the geometric issue addressed by our quasi-minuscule formula.

We treat signed quivers with involution. A nonfixed orbit gives a general linear gauge factor, while a fixed vertex gives an orthogonal or symplectic factor. A single current construction produces a GKLO-type homomorphism and its factorization through the associated $K$-theoretic quantized Coulomb branch algebra. 

\medskip
\noindent\textbf{Quasi-minuscule monopole operators.}
Let $\lambda$ be a dominant quasi-minuscule cocharacter. Its affine Grassmannian Schubert variety has the decomposition
\[
\bGr_\lambda=\Gr_\lambda\sqcup\{e\}.
\]
The open orbit $\Gr_\lambda$ is a line bundle $\mathcal L$ over $G/P_\lambda$, and the compactification $\mathbb P(\calO\oplus\mathcal L)$ resolves $\bGr_\lambda$. Extending the BFN space over this resolution gives a proper map
\[
\tilde p:\tilde{\calR}_{G,\bfN,\leq\lambda}\longrightarrow\calR_{G,\bfN,\leq\lambda}.
\]
We compute the localization of the pushforward of a dressed structure sheaf under $\tilde p$. Theorem~\ref{thm:quasi} shows, in particular, that for $f\in\mathbb C[T]^{W_\lambda}$,
\[
\sum_{w\in W^\lambda}
w\!\left(f\,\frac{\Eu\bigl(z^\lambda\bfN_\calO/(z^\lambda\bfN_\calO\cap\bfN_\calO)\bigr)}
{\Eu(T_\lambda\Gr_\lambda)}\right)(D_{w\lambda}-1)
\in\mathcal A_q(G,\bfN).
\]
Here $\Eu$ is the $K$-theoretic Euler class, and the inclusion is understood through the localization embedding. More precisely, the displayed operator agrees with the image of
$\tilde p_*\bigl(f[\calO_{\tilde{\calR}_{G,\bfN,\leq\lambda}}]\bigr)$ up to an element of $K^{G\times\widetilde{\mathbb C}_q^\times}(\pt)$. This is the $K$-theoretic counterpart of \cite[Theorem~2.4]{SSX26b}. It applies to arbitrary cotangent-type matter representations and supplies the monopole operator needed at the symplectic fixed vertex.

\medskip
\noindent\textbf{Quivers with involutions.}
Let $(Q,\tau)$ be a simply-laced quiver with an involution reversing arrows, with the signed data of Section~\ref{sec:quiver}. If $\tau$ has no fixed vertices, the gauge group $G_V^\sigma$ is a product of general linear groups, with one factor for each orbit of vertices. The matter representation is obtained from the involution-fixed arrow spaces together with the framing spaces. In particular, an arrow fixed by $\tau$ contributes a second exterior power in the convention used here. The geometric generators in this case are dressed minuscule monopole operators.

The fixed vertices $Q_0^\tau$ are partitioned into orthogonal ones $Q_O$ and the symplectic ones $Q_S$. For an orthogonal vertex $i$, the gauge group is $\SO(V_i)$ and the framing is a symplectic vector space $W_i$. For a symplectic vertex $i$, the gauge group is $\Sp(V_i)$ and the framing is an orthogonal even dimensional vector space $W_i$. In both cases, choose a maximal isotropic subspace $U_i\subset W_i$ so that $W_i=U_i\oplus U_i^*$.

By assumption, the fixed vertices are pairwise nonadjacent. Choose a
decomposition
\[
 Q_0=Q_0^+\sqcup Q_0^\tau\sqcup\tau(Q_0^+),
 \qquad Q_0^\tau=Q_O\sqcup Q_S.
\]
Choose a subset $Q_1^+\subset Q_1$ of representatives of the two-element arrow orbits.
Write $Q_1^\tau$ for the fixed arrows. With these data, we study the K-theoretic quantized Coulomb branch
\[
 \mathcal A_q(G\times F,\bfN)=K^{(G_\calO\times F_\calO)\rtimes
 \widetilde{\mathbb C}_q^\times}(\calR_{G,\bfN})
\]
associated to the following data:
\begin{equation*}
 G=\prod_{i\in Q_0^+}\GL(V_i)\times
   \prod_{i\in Q_O}\SO(V_i)\times
   \prod_{i\in Q_S}\Sp(V_i),
\end{equation*}
\begin{equation*}
 \bfN=\bigoplus_{h\in Q_1^+}\Hom(V_{s(h)},V_{t(h)})
 \oplus\bigoplus_{h\in Q_1^\tau}\Lambda^2 V_{t(h)}\oplus\bigoplus_{i\notin Q_0^\tau}\Hom(V_i,W_i)
 \oplus\bigoplus_{i\in Q_0^\tau}U_i\otimes V_i,
\end{equation*}
\[F=\prod_{i\notin Q_0^\tau}\GL(W_i)
   \times\prod_{i\in Q_0^\tau}\GL(U_i).\]
Here $F$ is the flavor symmetry.   

At the orthogonal fixed vertex the relevant cocharacter is minuscule; at the symplectic fixed vertex it is quasi-minuscule. The formula above therefore allows the same geometric construction to be carried out in all cases.

\medskip
\noindent\textbf{GKLO homomorphisms and Coulomb branches.}
For the signed-quiver setting, we define explicit generating series $\hat B_i(u)$ and $\hat\Theta_i(u)$ of $q$-difference operators. The shift $\mu$ and the leading coefficients $k_i$ are determined by the quiver and dimension data. Theorem~\ref{thm:gklo} shows that the assignments
\[
B_i(u)\longmapsto\hat B_i(u),\qquad
\Theta_i(u)\longmapsto\hat\Theta_i(u),\qquad
\mathbb K_i\longmapsto k_i
\]
define a homomorphism $\psi$ from the shifted affine $\mathrm{i}$quantum group $\Ui_\mu$, with flavor parameters, to the localized difference algebra. The geometric factorization, Theorem~\ref{thm:Coulomb}, asserts that $\psi$ factors through a homomorphism
\[
\Psi:\Ui_\mu\otimes_{\mathbb C}K^F(\pt)
\longrightarrow
\widetilde{\mathcal A_q}\otimes_{\mathbb C[q^{\pm1}]}\mathbb C(q).
\]
Here $F$ is the flavor group, and $\widetilde{\mathcal A_q}$ denotes the coefficient extension of the Coulomb branch algebra obtained by adjoining the square roots of determinant characters specified in the respective constructions.

The factorization identifies the coefficients of $\hat B_i(u)$ with dressed monopole classes, together with their prescribed character factors and, at the symplectic vertex, terms from the equivariant coefficient ring. The coefficients of $\hat\Theta_i(u)$ also belong to this coefficient ring. The proof thus combines verification of the Drinfeld relations with the minuscule and quasi-minuscule localization calculations. It provides a geometric interpretation of the generators at all three vertex types. In finite ADE type the source is the shifted affine $\mathrm{i}$quantum group of \cite[Definition 2.4]{LWW26}; outside that setting we use the formal extension of its current presentation specified in Section~\ref{sec:iquantum}.

\medskip
\noindent\textbf{Geometric context and related work.}
The constructions above belong to the Coulomb side of the geometric representation theory of quantum algebras. On the Higgs side, the basic mechanism is convolution on correspondences between quiver varieties. Its type $A$ model is the generalized Steinberg variety, whose equivariant $K$-theory realizes affine quantum groups by the work of Ginzburg--Vasserot and Vasserot \cite{GV93,Vas98}. Nakajima's quiver varieties extend this realization to type ADE \cite{Nak01}, while equivariant homology gives the corresponding Yangians \cite{Var00}. For quantum symmetric pairs, generalized Steinberg varieties of type $C$ play an analogous role: their equivariant $K$-theory realizes affine $\mathrm{i}$quantum groups of type AIII \cite{FMX22,SW26,LSX26}, and their equivariant homology realizes twisted Yangians \cite{dong2024flag,SY26}. The present paper studies the Coulomb counterpart of this picture.

The difference-operator description on the Coulomb side has its origin in the representations of Yangians and quantum groups constructed by Gerasimov, Kharchev, Lebedev, and Oblezin \cite{GKLO05,GKLOquant}. Its geometric meaning emerged in the quantization of affine Grassmannian slices \cite{KWWY14}. The identification of quiver Coulomb branches with generalized slices, together with the homomorphism from shifted Yangians to their quantized algebras \cite{BFN19}, placed these operators within the BFN construction; the $K$-theoretic extension involves shifted quantum affine algebras \cite{FT19}. For quivers with involutions, the proposed relation between fixed loci in affine Grassmannian slices and shifted twisted Yangians \cite{LWW,lu2025shifted} suggests a corresponding extension to quantum symmetric pairs; see also \cite{bartlett2025gklo} for type AI. The cohomological realizations for fixed-point-free involutions and orthosymplectic quivers were developed in \cite{SSX26,SSX26b,wang2025quivers}.

GKLO representations for shifted affine $\mathrm{i}$quantum groups have recently been constructed for quasi-split ADE types in \cite{LWW26} and for split simply-laced types in \cite{li2026gklorepresentationsshiftedquantum}. Our formulas are obtained from the monopole geometry of the specified gauge theories. The factorization theorem shows that these formulas define classes in the corresponding Coulomb branch algebras, giving a $K$-theoretic extension of the cohomological constructions in the cases treated here.

The type $\mathrm{AIII}^{(\tau)}_{n}$ specializations are among the cases proposed in \cite[Section~3(ix)]{Nak25}. The signed theories treated here come with an explicit cotangent polarization, so the ordinary BFN construction is available. The orthosymplectic quivers, and the mixed types $D$ and $E$ in \textit{loc.\ cit.}, need not admit such a polarization. A comparable $K$-theoretic realization of non-cotangent matter would require an extension of the geometric framework.

%\noindent\textbf{Organization of the paper.}
The paper is organized as follows. In Section~\ref{sec:Coulomb} we review $K$-theoretic Coulomb branches of cotangent type and prove the quasi-minuscule monopole formula. Section~\ref{sec:quiverinv} introduces signed quivers with isolated fixed vertices, and studies the corresponding quantized K-theoretic Coulomb branches. Section~\ref{sec:gklo} introduces the current presentation of the shifted $\mathrm{i}$quantum groups, constructs their GKLO-type representations and proves the Coulomb branch factorization. Appendices~\ref{sec:appen1}--\ref{sec:appen3} prove Theorem \ref{thm:gklo}.

\subsection*{Acknowledgments}
This work is supported by the National Key R\&D Program of China (No. 2025YFA1017400). YS is supported by the Science and Technology Commission of Shanghai Municipality (No. 22DZ2229014). We used generative AI to polish the article and proofread the text.

\section{Coulomb branches of cotangent type}\label{sec:Coulomb}
In this section, we recall the definition of the Coulomb branch introduced by Braverman, Finkelberg, and Nakajima \cite{NakCou1,BFN18}. We first state the formula for minuscule monopole operators, and then present a formula for quasi-minuscule monopole operators.

\subsection{BFN Coulomb Branch}
Let $G$ be a complex connected reductive group with a Borel subgroup $B$ and a maximal torus $T$. Let $X_*(T)$ be the cocharacter lattice of $T$ with dominant ones denoted by $X_*(T)^+$. Recall the dominance order on $X_*(T)^+$: for any $\lambda,\mu\in X_*(T)^+$, $\lambda\leq \mu$ if and only if $\mu-\lambda$ is a nonnegative integral linear combination of the positive coroots.
Let
$\calO=\mathbb{C}[[z]]$, 
$\calK=\mathbb{C}(\!(z)\!)$. We write $G_\calK:=G(\calK)$, $G_\calO:=G(\calO)$ and $T_\calK:=T(\calK)$, respectively. Any cocharacter $\lambda:\mathbb{G}_m\rightarrow T$ gives a homomorphism $\mathcal{K}^*\rightarrow T_\mathcal{K}$, and we let $z^\lambda$ denote the image of $z\in \mathcal{K}^*$.
Recall the affine Grassmannian is defined to be $\Gr_G=G_\calK/G_\calO$. By the Cartan decomposition,
\[\Gr_G=\bigsqcup_{\lambda\in X_*(T)^+}\Gr_\lambda,\]
where $\Gr_\lambda:=G_\calO z^\lambda G_\calO/G_\calO$. Its closure $\bGr_\lambda=\bigsqcup\limits_{\mu\in X_*(T)^+,\mu\leq \lambda}\Gr_\mu$. Let $P_\lambda$ be the subgroup of $G$ generated by the maximal torus $T$ and the root subgroups $U_\alpha$ satisfying $\langle\lambda,\alpha\rangle\leq 0$. Then $\Gr_\lambda$ is an affine bundle over the partial flag variety $G/P_\lambda$, see \cite{Zhu17}.

Let $\bfN$ be a complex representation of $G$. Let $\bfN_\calK:=\bfN(\calK)$ and $\bfN_\calO:=\bfN(\calO)$, respectively.
We consider
$$\mathcal{T}_{G,\bfN}
=G_\calK\times_{G_\calO} \bfN_\calO
=\{(gG_\calO,x):x\in g\bfN_\calO\}\subset 
\Gr_G\times \bfN_\calK.$$
The BFN space $\calR_{G,\bfN}$ \cite{BFN18} is defined by the pull back square
$$\xymatrix{
\calR_{G,\bfN} \ar@{^{(}->}[r] \ar@{^{(}->}[d] & \Gr_G\times \bfN_\calO \ar@{^{(}->}[d]\\
\mathcal{T}_{G,\bfN} \ar@{^{(}->}[r]&\Gr_G\times \bfN_\calK,}$$
i.e.,
$$\calR_{G,\bfN} = \{(gG_\calO,x): x\in g\bfN_\calO\cap \bfN_\calO\}\subset \Gr_G\times \bfN_\calO.$$

Following \cite[Section 2(i)]{BFN18}, we consider the $\mathbb{C}^\times$ action on $\calR_{G,\bfN}$, which rotates $z\in \calO$ by weight $1$ and scales $\bfN$ by weight $\frac{1}{2}$ simultaneously. 
Let $q^2$ denote the standard representation of $\mathbb{C}^\times$, and denote this $\mathbb{C}^\times$ by $\mathbb{C}^\times_q$. Let $\widetilde{\mathbb{C}}^\times_q$ denote the double cover of $\mathbb{C}^\times_q$. Hence,  
$K_{\widetilde{\mathbb{C}}^\times_q}(\pt)=\mathbb{C}[q^{\pm 1}]$. 
The quantized K-theoretic Coulomb branch algebra is defined to be the complexified equivariant K-homology group
$$\mathcal{A}_q(G,\bfN) = 
K^{G_{\calO}\rtimes\widetilde{\mathbb{C}}^\times_q}(\calR_{G,\bfN}),$$
equipped with the convolution product.

Let $T$ be a maximal torus of $G$, and let $W$ be the Weyl group. Then $\mathcal{A}_q(T,0)$ is generated by the Laurent polynomial ring $\mathbb{C}[T][q^{\pm 1}]$, together with symbols $D_\lambda$ for $\lambda\in X_*(T)$, satisfying the following conditions: 
\[D_\lambda D_\mu=D_{\lambda+\mu}, \quad D_\lambda e^\eta=q^{2\langle\lambda, \eta\rangle}e^\eta D_\lambda,\]
where $\eta\in X^*(T)$. Let $\calD_q(T)$ be the localization of $\mathcal{A}_q(T,0)$ by the multiplicative set generated by $\{1-q^me^\alpha\mid m\in \mathbb{Z},\alpha\in R\}\cup\{1-q^m\mid m\in \mathbb{Z}\setminus\{0\}\}$. Moreover, let
\[\calD_q(T)_{\loc}:=\calD_q(T)\otimes_{\mathbb{C}[q^{\pm 1}]}\mathbb{C}(q).\]

Then we have the following embedding of the Coulomb branch algebra $\mathcal{A}_q(G,\bfN)$ 
\begin{equation}\label{eq:embeddiff}
z^*\circ (\iota_*)^{-1}:\mathcal{A}_q(G,\bfN)\hookrightarrow \calD_q(T). 
\end{equation}

\subsection{Minuscule monopole operators}

Recall there is a natural projection 
$$\pi:\calR_{G,\bfN}\longrightarrow \Gr_G.$$
For any $\lambda\in X_*(T)^+$, let us denote 
$$\calR_{\lambda}=\pi^{-1}(\Gr_\lambda),\qquad \text{ and }\qquad \calR_{\leq \lambda} = \pi^{-1}(\bGr_\lambda).$$
Note that we can identify
$K^{G_\calO}(\calR_\lambda) \simeq K^{G_\calO}(\Gr_\lambda) \simeq \mathbb{C}[T]^{W_\lambda}$, where $W_\lambda$ is the stabilizer group of $\lambda$ in the Weyl group $W$. 

For a finite dimensional $T\times\widetilde{\mathbb{C}}^\times_q$-vector space $V$, we let 
\[\Eu(V):=\sum_i(-1)^i\wedge^iV^*=\prod_{w}(1-e^{-w}),\] where the product is over all the $T\times\widetilde{\mathbb{C}}^\times_q$-weights (with multiplicities) of $V$.

Now assume that $\lambda\in X_*(T)^+$ is minuscule, i.e.,
$\langle \lambda,\alpha\rangle\in \{0,1\}$
for all positive roots $\alpha$. Then the minuscule monopole operator is given by the following formula:
\begin{equation}\label{eq:minu-mono-op}
z^*\circ (\iota_*)^{-1}(f[\calO_{\mathcal{R}_\lambda}])=\sum_{w\in W^\lambda}
w\left(f\cdot \frac{\Eu\left(
z^\lambda\bfN_\calO/(z^\lambda\bfN_\calO\cap\bfN_\calO)\right)}{
\Eu(T_{\lambda}\Gr_\lambda)
}\right)
D_{w\lambda}\in \calD_q(T).
\end{equation}
Here $f\in \mathbb{C}[T]^{W_\lambda}\simeq K^{G_\calO}(\calR_\lambda)$, $W^\lambda$ is the set of minimal length representatives of cosets in $W/W_\lambda$, and $T_{\lambda}\Gr_\lambda
$ is the tangent space of $\Gr_\lambda$ at the torus fixed point $z^\lambda G_\calO/G_\calO$. Note that we have a bijection $W^\lambda\cong W\cdot \lambda$ by $w\mapsto w\lambda$. 
For any $\mu\in W\cdot\lambda$, 
\begin{equation}\label{equ:wts}
    T_\mu\Gr_\lambda = \bigoplus_{\alpha\in R, \langle\mu,\alpha\rangle\geq 1}\bigoplus_{n=0}^{\langle\mu,\alpha\rangle - 1}\mathfrak{g}_\alpha z^n,
\end{equation}
where $R$ is the set of roots of $G$.

If $N$ is one dimensional with $T$-weight $\mu$, then 
$$z^\lambda N_\calO=N\otimes z^m\calO,$$
where $m=\langle \lambda, \mu\rangle$. Thus,
$$z^{\lambda}N_\calO/(z^{\lambda}N_\calO\cap N_\calO)
=\begin{cases}
0, & \textit{ if }m\geq 0,\\
N\otimes(\mathbb{C}z^{-1}\oplus \cdots \oplus \mathbb{C}z^{m}), & \textit{ if } m< 0,
\end{cases}$$
and
\begin{equation}\label{equ:EuN}
    \Eu(z^{\lambda}N_\calO/(z^{\lambda}N_\calO\cap N_\calO))
=\begin{cases}
1 , & \textit{ if } m\geq 0,\\
(1-qe^{-\mu})(1-q^3e^{-\mu})\cdots (1-q^{-2m-1}e^{-\mu})

, & \textit{ if } m< 0.
\end{cases}
\end{equation}
Recall that $N$ is assumed to have $\widetilde{\mathbb{C}}^\times_q$-weight $q$.

\subsection{Quasi-minuscule monopole operators}
In this section, we compute the monopole operators associated to a quasi-minuscule cocharacter. The cohomology case is studied in \cite{SSX26b}.

Recall that a cocharacter $\lambda$ is called quasi-minuscule if $\lambda=\theta^\vee$ with $\theta$ being a highest root of $G$ (when the root system is irreducible, $\theta$ is unique). For any positive root $\alpha\in R^+\setminus\{\theta\}$, $\langle\lambda, \alpha\rangle\in \{0,1\}$, and $0$ is the only element in $X_*(T)^+$ smaller than $\lambda$. Hence, 
\[\bGr_\lambda=\Gr_\lambda\sqcup \{e\},\]
where $e\in \Gr_G$ is the identity point. Moreover, $\Gr_\lambda$ is a line bundle over $G/P_\lambda$. By a computation of the tangent weights, 
\[\Gr_\lambda\simeq G\times_{P_\lambda}\mathbb{C}_\theta\]
as $G$-equivariant line bundles over $G/P_\lambda$. Here $\mathbb{C}_\theta$ is the one-dimensional representation of $P_\lambda$. We let $\mathcal{L}$ denote the line bundle on the right hand side. The explicit isomorphism $\mathcal{L}\simeq \Gr_\lambda$ is given by 
\[[g,t]\mapsto gx_\theta(tz)z^\lambda G_\calO/G_\calO,\]
where $x_\theta(t):=\exp(tE_\theta)$ for $E_\theta$ the root vector in $\mathfrak{g}_\theta$.

There is a resolution of $\overline{\Gr_\lambda}$ given by 
\[p:\mathbb{P}(\calO\oplus\mathcal{L})\rightarrow \overline{\Gr_\lambda},\]
where the fibers at infinity all map to the identity point $e\in \overline{\Gr_\lambda}$, see \cite{NP01}. 

Note that when restricting to $G\times_{P_\lambda} \mathbb{C}_\theta$, the fiberwise quotient 
$p^*(\mathcal{T}_{G,\bfN})/p^*(\mathcal{R}_{G,\bfN})$ is a vector bundle of rank 
$d:=\dim(z^{\lambda}\bfN_\calO/(z^{\lambda}\bfN_\calO\cap \bfN_\calO))$; 
    at the infinity points, the fiber of $p^*(\mathcal{R}_{G,\bfN})$ equals the fiber of $p^*(\mathcal{T}_{G,\bfN})$.

\begin{lemma}\cite[Lemma 2.2]{SSX26b}
There exists a unique $G\times \widetilde{\mathbb{C}}^\times_q$-equivariant subbundle $\tilde{\mathcal{R}}_{G,\bfN,\leq \lambda}$ of $p^*(\mathcal{T}_{G,\bfN})$ 
such that 
\begin{itemize}
    \item $\tilde{\mathcal{R}}_{G,\bfN,\leq \lambda}$ coincides with $p^*(\mathcal{R}_{G,\bfN})$ over $p^{-1}(\Gr_\lambda)=G\times_{P_\lambda}\mathbb{C}_\theta$; 
    \item the fiberwise quotient $p^*(\mathcal{T}_{G,\bfN})/\tilde{\mathcal{R}}_{G,\bfN,\leq \lambda}$
    is a vector bundle of rank $d$. 
\end{itemize}
\end{lemma}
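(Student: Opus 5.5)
Uniqueness and existence are both about a subbundle of a vector bundle on the smooth variety $X:=\mathbb P(\calO\oplus\mathcal L)$. Over the open dense subset $p^{-1}(\Gr_\lambda)=G\times_{P_\lambda}\mathbb C_\theta$ the subbundle is prescribed, namely $p^*(\calR_{G,\bfN})$ with quotient of constant rank $d$. A subbundle of $p^*(\mathcal T_{G,\bfN})$ with locally free quotient of rank $d$ is a section of a Grassmannian bundle $\mathrm{Gr}_{\mathrm{corank}\,d}(p^*\mathcal T_{G,\bfN})\to X$. The Grassmannian bundle is separated, so a section is determined by its restriction to a dense open set; this gives uniqueness. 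Equivariance then also follows, since translates of $\tilde{\calR}$ by $G\times\widetilde{\mathbb C}^\times_q$ satisfy the same two properties. Note that $p^*\mathcal T_{G,\bfN}$ is infinite rank, but everything happens inside a finite-dimensional quotient. Over $\bGr_\lambda$ we have $z^{N}\bfN_\calO\subset g\bfN_\calO\cap\bfN_\calO$ for $N\gg0$. So I will work with $p^*\mathcal T/z^N\bfN_\calO$, a bundle of finite rank, and the statements reduce to it.

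For existence I will give an explicit construction. The complement of the open set is the infinity divisor $D\cong G/P_\lambda$, a smooth divisor. I will use the $G$-action to reduce to a single fiber $\mathbb P^1=\mathbb P(\mathbb C\oplus\mathbb C_\theta)$ over the base point $P_\lambda$, with coordinate $t$ on $\mathbb C_\theta$ and $s=1/t$ near infinity. On this fiber the point $t$ maps to $x_\theta(tz)z^\lambda$. Hence the fiber of $\mathcal T$ at $t$ is $x_\theta(tz)z^\lambda\bfN_\calO$, and the fiber of $\calR$ there is its intersection with $\bfN_\calO$.

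The key step is the following. I will decompose $\bfN$ into $\mathfrak{sl}_2$-strings for the $\mathfrak{sl}_2$-triple attached to $\theta$; equivalently, I will use weight spaces $\bfN_\mu$ grouped by $\theta$-strings. Because $\lambda=\theta^\vee$ pairs with weights in $\{-2,\dots,2\}$ on relevant pieces, the computation becomes a rank-one linear-algebra problem in the variables $z$ and $t$. On each string, the subspace $x_\theta(tz)z^\lambda\bfN_\calO\cap\bfN_\calO$ is spanned by explicit vectors whose coordinates are polynomial in $t$. After rescaling by powers of $s$, these vectors extend regularly to $s=0$, and at $s=0$ they remain linearly independent of the correct number. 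This produces a rank-$(\mathrm{rk}-d)$ subbundle over $\mathbb P^1$ extending the given one. The extension across $D$ can also be phrased abstractly. The map $U\to\mathrm{Gr}$ extends over the codimension-one locus $D$ because $X$ is smooth (a normal curve argument, or the valuative criterion, since $\mathrm{Gr}$ is proper). The extension is then unique and automatically equivariant, and the extended subbundle has quotient of rank $d$. So existence is really just properness of the Grassmannian plus smoothness of $X$ in codimension one: the rational map from a smooth variety to a proper variety is defined in codimension one, and here the indeterminacy locus would lie in $D$.

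The main obstacle is that extension over codimension one only gives a subbundle away from codimension $\ge2$, while $D$ is itself a divisor, so codimension one already suffices for the locus. The real issue is verifying that the limit subspace sits inside the fiber of $p^*\mathcal T$, which is automatic since we take the limit inside the Grassmannian of that bundle. It also requires the limit to be flat, which holds by the valuative extension along the DVR at the generic point of $D$. Extending then to all of $D$ uses $G$-equivariance, since $D$ is a single $G$-orbit. I would first try this equivariance argument, and fall back on the explicit $\mathfrak{sl}_2$-string computation to confirm that the limit at $s=0$ has the right dimension.
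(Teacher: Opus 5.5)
The paper does not prove this lemma; it quotes it from \cite[Lemma 2.2]{SSX26b}, so there is no in-text argument to compare with. Your proof stands on its own and is essentially correct. You pass to the finite-rank bundle $E=p^*\mathcal T_{G,\bfN}/z^N\bfN_\calO$. Uniqueness and equivariance then follow from separatedness of the Grassmannian bundle and density of $U=p^{-1}(\Gr_\lambda)$ in the reduced $X$. Existence follows from the valuative criterion at the generic point of $D$, together with homogeneity of $D\cong G/P_\lambda$.

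Add one remark so that uniqueness genuinely reduces to finite rank. Any $\tilde\calR$ as in the lemma contains $z^N\bfN_\calO$, because the composite $z^N\bfN_\calO\otimes\mathcal O_X\to p^*\mathcal T_{G,\bfN}/\tilde\calR$ vanishes on $U$ and its target is locally free.

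There is a more elementary variant that also makes your phrase about reducing to a single fiber precise. Since $X\cong G\times_{P_\lambda}\mathbb P^1$, $G$-equivariant subbundles of $p^*\mathcal T_{G,\bfN}$ correspond to $P_\lambda$-equivariant subbundles of its restriction to the fiber $\mathbb P^1=\mathbb P(\mathbb C\oplus\mathbb C_\theta)$. On that smooth curve, the saturation in $E|_{\mathbb P^1}$ of the subbundle given over $\mathbb A^1$ has torsion-free, hence locally free, quotient of generic rank $d$. It is $P_\lambda$-stable by uniqueness. So no explicit computation is needed for this lemma. The $\mathfrak{sl}_2$-string analysis is what one needs for the weights of the quotient at $\infty$, which is the input \cite[Lemma 2.5]{SSX26b} used in the proof of Theorem~\ref{thm:quasi}.

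Two points in your write-up need fixing.

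\emph{The codimension-one step.} You say the section ``extends over the codimension-one locus $D$ because $X$ is smooth'' and that ``codimension one already suffices.'' That is wrong as stated. Normality only gives a maximal domain of definition $X'\supseteq U$ whose complement is a closed subset of $D$ of codimension $\ge 2$ in $X$, and this could a priori be nonempty. The step that closes the argument is that $X'$ is $G$-stable, and it should be the main argument rather than an afterthought:
\begin{itemize}
\item $U$ is $G$-stable and the section $\sigma$ on $U$ is $G$-equivariant.
\item Hence $x\mapsto g\,\sigma(g^{-1}x)$ is a section on $gX'$ that agrees with $\sigma$ on $U$, so $gX'\subseteq X'$ by maximality.
\item $X'$ contains a neighbourhood of the generic point of $D$, and $G$ acts transitively on $D$, so $X'=X$.
\end{itemize}

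\emph{The explicit fallback.} You assert that $\langle\theta^\vee,\mu\rangle\in\{-2,\dots,2\}$ on the relevant weights. This is false for a general $\bfN$. For example, for $\Sp(V)$ acting on $\Sym^3V$, the quasi-minuscule coweight $\epsilon_1$ pairs to $3$ with the highest weight. It does hold for the quiver matter of Section~\ref{sec:quiverinv}, but the lemma is stated for arbitrary cotangent-type $\bfN$. Since the fallback is unnecessary, drop it.
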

The map $p$ also induces a proper map $\tilde{p}:\tilde{\calR}_{G,\bfN,\leq \lambda}\rightarrow \mathcal{T}_{G,\bfN}$, with image lying in $\mathcal{R}_{G,\bfN,\leq \lambda}:=\pi^{-1}(\bGr_\lambda)\subset \calR_{G,\bfN}$. 
We have the following diagram
$$\xymatrix@!C=4pc{
p^*(\mathcal{T}_{G,\bfN})\ar[rr]\ar[dd]
&& \mathcal{T}_{G,\bfN,\leq \lambda}\ar[dd]|{\hole}\ar[rr]
&& \mathcal{T}_{G,\bfN}\ar[dd]|{\hole}\\
& \tilde{\mathcal{R}}_{G,\bfN,\leq \lambda}\ar[ul]\ar[dl]
\ar[rr]^{\tilde{p}\qquad\quad } && \mathcal{R}_{G,\bfN,\leq \lambda}\ar[ul]\ar[dl]\ar[rr]&&
\mathcal{R}_{G,\bfN}.\ar[ul]\ar[dl]^\pi\\
\mathbb{P}(\mathcal{O}\oplus \mathcal{L})
\ar[rr]^-{p} &&\overline{\Gr_\lambda}\ar[rr]&&\Gr_G}$$

For any $f\in K^G(\Gr_\lambda)\simeq K^G(G/P_\lambda)\simeq \mathbb{C}[T]^{W_\lambda}$, we can pull it back to $\tilde{\calR}_{G,\bfN,\leq \lambda}$ via the projection map $\tilde{\calR}_{G,\bfN,\leq \lambda}\rightarrow\mathbb{P}(\calO\oplus\mathcal{L})\rightarrow G/P_\lambda$. Therefore, we can consider the class
\[\tilde{p}_*(f[\calO_{\tilde{\calR}_{G,\bfN,\leq \lambda}}])\in \mathcal{A}_q(G,\bfN), \]
and we have the following formula.

\begin{theorem}\label{thm:quasi}
    With the above notations,
    \begin{align*}
    &z^*\circ (\iota_*)^{-1}\bigg(\tilde{p}_*(f[\calO_{\tilde{\calR}_{G,\bfN,\leq \lambda}}])\bigg)\\
    =&\sum_{w\in W^\lambda}w\left(f\cdot \frac{\Eu\left(z^\lambda\bfN_\calO/(z^\lambda\bfN_\calO\cap\bfN_\calO)\right)}{\Eu(T_{\lambda}\Gr_\lambda)}\right)(D_{w\lambda}-1)+\cdots\in \calD_q(T),
\end{align*}
where $\cdots$ denotes some polynomial in $K^{G\times\widetilde{\mathbb{C}}^\times_q}(\pt)$. In particular, 
\[\sum_{w\in W^\lambda}w\left(f\cdot \frac{\Eu\left(z^\lambda\bfN_\calO/(z^\lambda\bfN_\calO\cap\bfN_\calO)\right)}{\Eu(T_{\lambda}\Gr_\lambda)}\right)(D_{w\lambda}-1)\in \mathcal{A}_q(G,\bfN).\]
\end{theorem}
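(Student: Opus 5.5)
We compute $z^*\circ(\iota_*)^{-1}$ of $\tilde p_*(f[\calO_{\tilde\calR_{G,\bfN,\leq\lambda}}])$ by torus localization on the resolution $\mathbb P(\calO\oplus\mathcal L)$, in parallel with the cohomological argument of \cite[Theorem~2.4]{SSX26b} and the derivation of the minuscule formula \eqref{eq:minu-mono-op}.

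The $T\times\widetilde{\mathbb C}^\times_q$-fixed points of $\mathbb P(\calO\oplus\mathcal L)$ come in two families, both indexed by $w\in W^\lambda$, that is, by the $T$-fixed points $wP_\lambda$ of $G/P_\lambda$. The zero-section points map to $z^{w\lambda}\in\Gr_\lambda$, and the infinity-section points $\infty_w$ all map to $e$. Since $\tilde\calR_{G,\bfN,\leq\lambda}$ is a vector bundle over $\mathbb P(\calO\oplus\mathcal L)$ sitting inside $p^*\mathcal T_{G,\bfN}$, the class $[\calO_{\tilde\calR}]$ pulled back to the zero section of $p^*\mathcal T_{G,\bfN}$ is the Euler class of the quotient bundle $p^*\mathcal T/\tilde\calR$. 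Over the zero-section points this quotient has fiber $z^{w\lambda}\bfN_\calO/(z^{w\lambda}\bfN_\calO\cap\bfN_\calO)$, exactly as in the minuscule case.

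We then push forward via the localization (Lefschetz--Riemann--Roch) formula along $p$. A zero-section point $w$ contributes
\[
w\Bigl(f\,\Eu\bigl(z^\lambda\bfN_\calO/(z^\lambda\bfN_\calO\cap\bfN_\calO)\bigr)\big/\Eu(T_\lambda\Gr_\lambda)\Bigr)D_{w\lambda}.
\]
Here we use that the tangent space to $\mathbb P(\calO\oplus\mathcal L)$ at this point equals $T_{w\lambda}\Gr_\lambda$, since $p$ is an isomorphism there; this matches \eqref{eq:minu-mono-op}. An infinity point $\infty_w$ maps to $e$, so it contributes only a multiple of $D_0=1$, namely
\[
w\Bigl(f\cdot \Eu(Q_\infty)\big/\Eu\bigl(T_{\infty}\mathbb P(\calO\oplus\mathcal L)\bigr)\Bigr).
\]
In this expression $Q_\infty$ is the fiber of $p^*\mathcal T/\tilde\calR$ at $\infty_w$, and the Euler class in the denominator is taken with the normal weight of $\mathcal L^{-1}$ in place of $\mathcal L$. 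The normalization also contributes an Euler class of $\mathfrak g/\mathfrak b$ type, which cancels after embedding via $\iota_*^{-1}$ into $\calD_q(T)$, just as in the minuscule case.

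The key step is to compare the two families of terms. Write $A:=\sum_w w(f\,\Eu(\cdots)/\Eu(T_\lambda\Gr_\lambda))$, which is the sum of the zero-section coefficients. The total pushforward is then
\[
\sum_w w(\cdots)(D_{w\lambda}-1)+\Bigl(A+\sum_w(\infty\text{-terms})\Bigr).
\]
The bracket is the fixed-point sum for the pushforward to a point of the class $f\cdot\Eu(p^*\mathcal T/\tilde\calR)$ on the projective bundle $\mathbb P(\calO\oplus\mathcal L)$. This is a proper smooth $G\times\widetilde{\mathbb C}^\times_q$-variety, so by localization the sum equals the equivariant Euler characteristic of a genuine equivariant class on a proper variety. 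It therefore lies in $K^{G\times\widetilde{\mathbb C}^\times_q}(\pt)$, and it is a polynomial, not merely a rational function. To make this precise, we first check that the equivariant $K$-theory class $\Eu(p^*\mathcal T/\tilde\calR)$ restricts at each fixed point to the Euler class appearing in the two families. The identification of the fiber $Q_\infty$ is given by the uniqueness part of the preceding lemma, since $p^*\mathcal T/\tilde\calR$ is an honest vector bundle of rank $d$.

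The main obstacle is bookkeeping rather than conceptual difficulty. We must justify that the pushforward $\tilde p_*$, followed by $z^*\circ(\iota_*)^{-1}$, is computed by this fiberwise localization: the $\iota_*^{-1}$ normalization at $e$ uses $\mathcal T$ rather than $\calR$, and the $\widetilde{\mathbb C}^\times_q$ weights ($q$ on $\bfN$, $q^2$ on $z$) must be tracked consistently on $\mathcal L$. We would handle this by restricting to $T$-fixed fibers, where everything reduces to the abelian computation underlying \eqref{eq:minu-mono-op}.

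The final inclusion in the statement then follows. The class $\tilde p_*(f[\calO_{\tilde\calR}])$ lies in $\mathcal A_q(G,\bfN)$, and constants in $K^{G\times\widetilde{\mathbb C}^\times_q}(\pt)$ lie in $\mathcal A_q(G,\bfN)$ as classes supported on $\pi^{-1}(e)$. Subtracting the constant from the pushforward leaves the displayed operator, which is therefore in $\mathcal A_q(G,\bfN)$.
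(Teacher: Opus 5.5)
Your proposal is correct, and its skeleton matches the paper's proof. You localize over the fixed points $(wP_\lambda,0)$ and $(wP_\lambda,\infty)$, observe that the infinity points lie over $e$ and so carry $D_0=1$, and split the sum into the $(D_{w\lambda}-1)$-terms plus a shift-free remainder.

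The difference is in how you show the remainder lies in $K^{G\times\widetilde{\mathbb C}^\times_q}(\pt)$. The paper works on the $\mathbb P^1$-fiber over $1\cdot P_\lambda$ in several steps:
\begin{enumerate}
\item it uses \cite[Lemma~2.5]{SSX26b} to write the weights of $(p^*\mathcal T_{G,\bfN}/\tilde{\calR})_\infty$ as $e^{\gamma_i}e^{n_i\theta}q^{2n_i}$;
\item it deduces from this that the two-term combination $F$ is a Laurent polynomial;
\item it checks $W_\lambda$-invariance of $F$;
\item it applies localization on $G/P_\lambda$.
\end{enumerate}
You instead recognize the whole remainder as the fixed-point expansion of $\chi\bigl(\mathbb P(\calO\oplus\mathcal L),\,f\cdot\lambda_{-1}(Q^\vee)\bigr)$. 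Here $Q=p^*\mathcal T_{G,\bfN}/\tilde{\calR}$ is a $G\times\widetilde{\mathbb C}^\times_q$-equivariant rank-$d$ bundle on a smooth projective variety with isolated fixed points, so the result is automatically a genuine class in $K^{G\times\widetilde{\mathbb C}^\times_q}(\pt)$. This is the paper's argument carried out as a single pushforward to a point: the paper's $F$ is the restriction to $1\cdot P_\lambda$ of the fiberwise pushforward along $\mathbb P(\calO\oplus\mathcal L)\to G/P_\lambda$.

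Your route needs no information about the weights at infinity and no separate invariance check. In particular, your appeal to the uniqueness in the preceding lemma to identify $Q_\infty$ is superfluous. The paper's route, in exchange, keeps explicit control of the fiber at infinity, which is what you would need to actually compute the term $\cdots$.

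Your aside about a cancelling Euler class of $\mathfrak g/\mathfrak b$ type is also unnecessary. With the normalization of \eqref{eq:minu-mono-op}, the denominators are exactly the tangent Euler classes of $\mathbb P(\calO\oplus\mathcal L)$ at the fixed points, as in the paper.
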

\begin{proof}
    The proof is similar to the proof of \cite[Theorem 2.4]{SSX26b}. The torus fixed points in $\tilde{\calR}_{G,\bfN,\leq \lambda}$ are $\{(w\cdot P_\lambda, 0), (w\cdot P_\lambda, \infty)\mid w\in W^\lambda\}$. Since everything is $G$-equivariant, we only need to compute the localization contribution when $w\in W^\lambda$ is the identity element. By definition, 
    \[\begin{aligned}
    \Eu(T_{(1\cdot P_\lambda, 0)}\mathbb{P}(\calO\oplus\mathcal{L}))
    & =(1-q^{-2}e^{-\theta})\cdot \Eu(T_{1\cdot P_\lambda}G/P_\lambda)=\Eu(T_\lambda\Gr_\lambda),\\
    \Eu(T_{(1\cdot P_\lambda, \infty)}\mathbb{P}(\calO\oplus\mathcal{L}))
    & =(1-q^2e^\theta)\cdot \Eu(T_{1\cdot P_\lambda}G/P_\lambda).
    \end{aligned}\]
    
    Hence, by localization we get
    \begin{align*}
        &z^*\circ (\iota_*)^{-1}\big(\tilde{p}_*(f[\calO_{\tilde{\calR}_{G,\bfN,\leq \lambda}}])\big)\\
        =&\sum_{w\in W^\lambda}w\bigg(f\frac{\Eu\big((p^*(\mathcal{T}_{G,\bfN})/\tilde{\calR})_{(1\cdot P_\lambda, 0)}\big)}{\Eu(T_{(1\cdot P_\lambda, 0)}\mathbb{P}(\calO\oplus\mathcal{L}))}\bigg)D_{w\lambda}+\sum_{w\in W^\lambda}w\bigg(f\frac{\Eu\big((p^*(\mathcal{T}_{G,\bfN})/\tilde{\calR})_{(1\cdot P_\lambda, \infty)}\big)}{\Eu(T_{(1\cdot P_\lambda,\infty)}\mathbb{P}(\calO\oplus\mathcal{L}))}\bigg)\\
        =&\sum_{w\in W^\lambda}w\bigg(f\frac{\Eu\big(z^\lambda\bfN_\calO/(z^\lambda\bfN_\calO\cap\bfN_\calO)\big)}{\Eu(T_\lambda\Gr_\lambda)}\bigg)(D_{w\lambda}-1)\\
        &+\sum_{w\in W^\lambda}w\bigg(f\frac{\Eu\big(z^\lambda\bfN_\calO/(z^\lambda\bfN_\calO\cap\bfN_\calO)\big)}{(1-q^{-2}e^{-\theta})\Eu(T_{1\cdot P_\lambda}G/P_\lambda)}+f\frac{\Eu\big((p^*(\mathcal{T}_{G,\bfN})/\tilde{\calR})_{(1\cdot P_\lambda, \infty)}\big)}{(1-q^{2}e^{\theta})\Eu(T_{1\cdot P_\lambda}G/P_\lambda)}\bigg).
    \end{align*}
Suppose the $T\times \widetilde{\mathbb{C}}^\times_q$-weights in $z^\lambda\bfN_\calO/(z^\lambda\bfN_\calO\cap\bfN_\calO)$ are $\{e^{\gamma_1},\cdots, e^{\gamma_d}\}$. 
The fiber of $\mathbb{P}(\mathcal{O}\oplus \mathcal{L})$ over $1\cdot P_\lambda$ is isomorphic to $\mathbb{P}^1$. Restricting the vector bundle $p^*(\mathcal{T}_{G,\bfN})/\tilde{\mathcal{R}}$ to this fiber and applying \cite[Lemma 2.5]{SSX26b}, we obtain that
the $T\times \widetilde{\mathbb{C}}^\times_q$-weights in $\big(p^*(\mathcal{T}_{G,\bfN})/\tilde{\calR}\big)_{(1\cdot P_\lambda, \infty)}$ can be ordered as $e^{\gamma_1'},\ldots,e^{\gamma_d'}$ with $e^{\gamma_i'}=e^{\gamma_i}e^{n_i\theta}q^{2n_i}$ for some $n_i\in \mathbb{Z}$. Therefore, 
\[F:=f\frac{\Eu\big(z^\lambda\bfN_\calO/(z^\lambda\bfN_\calO\cap\bfN_\calO)\big)}{1-q^{-2}e^{-\theta}}+f\frac{\Eu\big((p^*(\mathcal{T}_{G,\bfN})/\tilde{\calR})_{(1\cdot P_\lambda, \infty)}\big)}{1-q^2e^\theta}\in K^T(\pt)[q^{\pm 1}].\]
Moreover, $F$ is $W_\lambda$ invariant, since $f$ and $e^\theta$ are invariant, as are the two Euler classes
\[
 \Eu\big(z^\lambda\bfN_\calO/(z^\lambda\bfN_\calO\cap\bfN_\calO)\big)
 \quad\text{and}\quad
 \Eu\big((p^*(\mathcal{T}_{G,\bfN})/\tilde{\calR})_{(1\cdot P_\lambda,\infty)}\big).
\]
Hence, by the localization theorem on $G/P_\lambda$, we get that
\begin{align*}
    &\sum_{w\in W^\lambda}w\bigg(f\frac{\Eu\big(z^\lambda\bfN_\calO/(z^\lambda\bfN_\calO\cap\bfN_\calO)\big)}{(1-q^{-2}e^{-\theta})\Eu(T_{1\cdot P_\lambda}G/P_\lambda)}+f\frac{\Eu\big((p^*(\mathcal{T}_{G,\bfN})/\tilde{\calR})_{(1\cdot P_\lambda, \infty)}\big)}{(1-q^{2}e^{\theta})\Eu(T_{1\cdot P_\lambda}G/P_\lambda)}\bigg)\\
    =&\sum_{w\in W^\lambda}w\bigg(\frac{F}{\Eu(T_{1\cdot P_\lambda}G/P_\lambda)}\bigg)\in K^{G\times\widetilde{\mathbb{C}}^\times_q}(\pt).
\end{align*}
This finishes the proof of the theorem.
\end{proof}

\subsection{Flavor symmetry}
Let $F$ be a reductive group, and we assume further that the $G$-representation $\bfN$ can be extended to a $G\times F$-representation. 
Then we can extend the above definition by 
$$\mathcal{A}_q(G\times F,\bfN) := K^{(G_{\calO}\times F_{\calO})\rtimes\widetilde{\mathbb{C}}^\times_q}(\calR_{G,\bfN}).$$
The embedding \eqref{eq:embeddiff} becomes
$$\mathcal{A}_q(G\times F,\bfN)\hookrightarrow K^F(\pt)\otimes \calD_q(T).$$
The results in this section still hold.

\section{Quantized Coulomb branch for signed quivers with involutions}\label{sec:quiverinv}
In this section, we introduce signed quivers with involutions, and study the corresponding quantized Coulomb branches. 

\subsection{Quivers, signs, and the cotangent polarization}\label{sec:quiver}
Let $Q=(Q_0,Q_1,s,t)$ be a finite simply-laced quiver without edge loops. We assume that there is an involution $\tau$ on $Q$ satisfying
\begin{equation*}
 s(\tau h)=\tau(t(h)),\qquad t(\tau h)=\tau(s(h)).
\end{equation*}
Let $Q_0^\tau$ denote the set of fixed vertices.
\begin{lemma}
    The fixed vertices are pairwise nonadjacent.
\end{lemma}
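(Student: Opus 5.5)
Suppose two fixed vertices $i,j\in Q_0^\tau$ are joined by an arrow $h$ with $s(h)=i$ and $t(h)=j$. Since $\tau$ reverses arrows, $\tau h$ satisfies
\[
s(\tau h)=\tau(t(h))=\tau(j)=j,\qquad t(\tau h)=\tau(s(h))=\tau(i)=i.
\]
So $\tau h$ is an arrow from $j$ to $i$. We now split into two cases according to whether $i=j$ or $i\neq j$, and derive a contradiction in each.

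If $i=j$, then $h$ is an edge loop. This is excluded, since $Q$ is assumed to have no edge loops. (A fixed arrow $h=\tau h$ is still allowed, but only between two vertices interchanged by $\tau$: a fixed arrow needs $s(h)=\tau(t(h))$.)

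If $i\neq j$, then $h$ goes from $i$ to $j$ and $\tau h$ goes from $j$ to $i$. These are distinct arrows, since they have different sources. So the underlying graph has at least two edges between $i$ and $j$. This contradicts the assumption that $Q$ is simply-laced, meaning that at most one arrow joins any pair of vertices.

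The only real point of care is the meaning of ``simply-laced''. It must mean that the underlying graph has at most one edge between two vertices, regardless of orientation. Then $h$ and $\tau h$, oppositely oriented between $i$ and $j$, already violate it, and the two cases together show that no two fixed vertices are adjacent.
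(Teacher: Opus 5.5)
Your proof is correct and follows the paper's argument: $\tau h$ runs from $j$ to $i$, which gives a second edge between $i$ and $j$ and contradicts simple-lacedness. Your separate treatment of $i=j$ through the no-loop assumption, and your remark that $h\neq\tau h$ because their sources differ, make explicit what the paper leaves implicit.
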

\begin{proof}
    Suppose $i,j\in Q_0^\tau$, and $h\in Q_1$ is an edge going from $i$ to $j$. Then by the above condition, $\tau h$ would be an edge going from $j$ to $i$. This contradicts the condition that $Q$ is simply-laced. 
\end{proof}
Choose a
decomposition
\[
 Q_0=Q_0^+\sqcup Q_0^\tau\sqcup\tau(Q_0^+),
 \qquad Q_0^\tau=Q_O\sqcup Q_S.
\] 
Choose a subset $Q_1^+\subset Q_1$ of representatives of the two-element arrow orbits.
Write $Q_1^\tau$ for the fixed arrows. Since the underlying graph is
simply-laced, an arrow is fixed precisely when its endpoints are
interchanged by $\tau$.

For $i\in Q_0^+$ choose nonzero $V_i$ and put $V_{\tau i}=V_i^*$. At $i\in Q_O$
choose an orthogonal space $V_i$ of dimension at least three, and at
$i\in Q_S$ choose a symplectic space of positive even dimension. Put
$v_i=\dim V_i=v_{\tau i}$ and
\begin{equation}\label{eq:signedgauge}
 G=\prod_{i\in Q_0^+}\GL(V_i)\times
   \prod_{i\in Q_O}\SO(V_i)\times
   \prod_{i\in Q_S}\Sp(V_i).
\end{equation}
For each vertex $i$, we choose a framing space $W_i$. If $i$ is fixed, we require $W_i$ to carry a bilinear form of the type opposite to that on $V_i$ and to admit a polarization $W_i=U_i\oplus U_i^*$. In particular, $w_i=\dim W_i$ is even at every fixed vertex.
Set
\[
 m_i:=\begin{cases}w_i,&i\notin Q_0^\tau,\\
 \dim U_i=w_i/2,&i\in Q_0^\tau,\end{cases}
 \qquad
 F=\prod_{i\notin Q_0^\tau}\GL(W_i)
   \times\prod_{i\in Q_0^\tau}\GL(U_i).
\]
The chosen half of the matter representation is
\begin{equation}\label{eq:signedmatter}
\begin{split}
 \bfN={}&\bigoplus_{h\in Q_1^+}\Hom(V_{s(h)},V_{t(h)})
 \oplus\bigoplus_{h\in Q_1^\tau}\Lambda^2 V_{t(h)}\\
 &\oplus\bigoplus_{i\notin Q_0^\tau}\Hom(V_i,W_i)
 \oplus\bigoplus_{i\in Q_0^\tau}U_i\otimes V_i.
\end{split}
\end{equation}
The symplectic matter space is then $\mathbf M=\bfN\oplus\bfN^*$. At a fixed vertex this polarization restricts the
framing symmetry to $\GL(U_i)$. In particular, the even-dimensional
orthogonal framing at a symplectic gauge vertex makes this a
cotangent-type theory.

Here is a signed fixed-locus description of \eqref{eq:signedmatter}.
Choose $\varepsilon_i=\varepsilon_{\tau i}$, equal to $1$ on nonfixed
pairs and orthogonal vertices and to $-1$ on symplectic vertices.
For a vector space $U$ and $\varepsilon\in\{1,-1\}$, the hyperbolic pairing of sign $\varepsilon$ on $U\oplus U^*$ is
\[
 H_\varepsilon\bigl((u,\phi),(v,\psi)\bigr)
 =\psi(u)+\varepsilon\phi(v).
\]
It is nondegenerate and satisfies
$H_\varepsilon(y,x)=\varepsilon H_\varepsilon(x,y)$, so it is
symmetric for $\varepsilon=1$ and alternating for $\varepsilon=-1$.
Both $U$ and $U^*$ are isotropic, and their mutual pairing is the
evaluation pairing. Equip $V_i\oplus V_{\tau i}=V_i\oplus V_i^*$
with $H_1$ for $i\in Q_0^+$. Write $B_i$ for the pairing between
$V_i$ and $V_{\tau i}$, including the chosen form when $i=\tau i$.
The adjoint $f^\dagger:V_{\tau j}\to V_{\tau i}$ of
$f:V_i\to V_j$ is characterized by
\[
 B_j(fx,y)=B_i(x,f^\dagger y),\qquad
 (f^\dagger)^\dagger=\varepsilon_i\varepsilon_j f.
\]
Choose arrow signs $\{\eta_h\in \{\pm 1\}\mid h\in Q_1\}$ satisfying
\begin{equation}\label{eq:arrowsigns}
 \eta_h\eta_{\tau h}=\varepsilon_{s(h)}\varepsilon_{t(h)},
 \qquad \eta_h=-1\quad(h\in Q_1^\tau).
\end{equation}
Then $(\sigma f)_{\tau h}=\eta_h f_h^\dagger$ is an involution on
$E_V=\bigoplus_h\Hom(V_{s(h)},V_{t(h)})$. Let us consider the fixed locus $E_V^\sigma$. If an arrow $h$ is not fixed by $\tau$, the $ \sigma$-fixed locus in the two summands corresponding to $h$ and $\tau h$ are identified with the $h$-summand. Therefore,
projection to $Q_1^+$ identifies each two-element fixed summand with
the corresponding Hom space in \eqref{eq:signedmatter}. A fixed arrow has condition
$f_h=-f_h^\dagger$, giving $\Lambda^2 V_{t(h)}$.

For the fixed framing summand use
$\Hom(V_i,U_i)\oplus\Hom(U_i^*,V_i)$, with the hyperbolic
pairing of sign $-\varepsilon_i$ on the framing vector space $W_i=U_i\oplus U_i^*$. The involution
$(a,b)\mapsto(b^\dagger,-a^\dagger)$ has fixed locus
$\{(a,-a^\dagger)\}\cong U_i\otimes V_i$.
For a nonfixed vertex $i$, introduce two new framing vertices
$p_i,p_i^\vee$ carrying $W_i,W_i^*$, respectively, with the pairing
$H_1$. Extend $\tau$ by $\widetilde\tau(p_i)=p_i^\vee$ and
$\widetilde\tau(p_i^\vee)=p_i$. Adjoin the two arrows
\[
 i\longrightarrow p_i,\qquad p_i^\vee\longrightarrow\tau i,
\]
which form a single orbit under $\widetilde\tau$. Their representation
space and its signed involution are
\[
 \Hom(V_i,W_i)\oplus\Hom(W_i^*,V_{\tau i}),
 \qquad \sigma(a,b)=(-b^\dagger,-a^\dagger).
\]
Since adjoint twice is the identity here, this is an involution with
fixed locus
\[
 \{(a,-a^\dagger):a\in\Hom(V_i,W_i)\}
 \cong\Hom(V_i,W_i).
\]
The flavor factor $\GL(W_i)$ acts on $W_i^*$ contragrediently, making
this identification equivariant.

We perform this construction independently for every
$i\notin Q_0^\tau$. In particular, $p_{\tau i}$ is a new
vertex distinct from $p_i^\vee$: its space is the independently
chosen $W_{\tau i}$, not $W_i^*$. Thus a nonfixed gauge orbit gives
two independent framing-arrow orbits, whose fixed loci are
$\Hom(V_i,W_i)$ and $\Hom(V_{\tau i},W_{\tau i})$. This explains
why the third summand of \eqref{eq:signedmatter} runs over all
nonfixed vertices, rather than over $Q_0^+$ alone. It is the fixed
locus of the augmented framing-arrow representation under
the signed involution $\sigma$ induced by $\widetilde\tau$; we are
not taking the fixed part of the original sum
$\bigoplus_i\Hom(V_i,W_i)$, on which no such involution has been
specified.

The gauge group is 
\begin{equation*}
 G_V=\prod_{i\in Q_0\setminus Q_0^\tau}\GL(V_i)\times
   \prod_{i\in Q_O}\mathrm{O}(V_i)\times
   \prod_{i\in Q_S}\Sp(V_i).
\end{equation*}
The group $G$ in \eqref{eq:signedgauge} can be identified with the identity component of the fixed gauge group 
\[G_V^\sigma:=\{(g_i)\in G_V\mid g_{\tau i}^{-1} = g_i^\dagger, \forall i\in Q_0\}.\]

\subsection{Monopole operators}

In this section, we study the K-theoretic quantized Coulomb branch
\[
 \mathcal A_q(G\times F,\bfN)=K^{(G_\calO\times F_\calO)\rtimes
 \widetilde{\mathbb C}_q^\times}(\calR_{G,\bfN})
\]
associated to the following $(G,\bfN,F)$ from Section \ref{sec:quiver}:
\begin{equation*}
 G=\prod_{i\in Q_0^+}\GL(V_i)\times
   \prod_{i\in Q_O}\SO(V_i)\times
   \prod_{i\in Q_S}\Sp(V_i),
\end{equation*}
\begin{equation*}
 \bfN=\bigoplus_{h\in Q_1^+}\Hom(V_{s(h)},V_{t(h)})
 \oplus\bigoplus_{h\in Q_1^\tau}\Lambda^2 V_{t(h)}\oplus\bigoplus_{i\notin Q_0^\tau}\Hom(V_i,W_i)
 \oplus\bigoplus_{i\in Q_0^\tau}U_i\otimes V_i,
\end{equation*}
\[F=\prod_{i\notin Q_0^\tau}\GL(W_i)
   \times\prod_{i\in Q_0^\tau}\GL(U_i).\]
We give some explicit formulae for the monopole operators.

For the group $\GL(V_i), \SO(V_i)$ or $\Sp(V_i)$, we use  $T_{V_i}$ to denote its diagonal maximal torus, and let 
\[
T=\prod_{i\in Q_0^+\sqcup Q_0^\tau}T_{V_i}
\]
be the maximal torus in $G$. We use $x_{i,j}$ (resp.$\epsilon_{i,j}$) to denote the $j$-th coordinate character (resp. cocharacter) for $T_{V_i}$, and let $D_{i,j}\in \mathcal{A}_q(T,0)$ denote the difference operator corresponding to the cocharacter $\epsilon_{i,j}$.

For any $i\in Q_0^+$, let
\[
 x_{\tau i,r}=x_{i,r}^{-1},\qquad
 D_{\tau i,r}=D_{i,r}^{-1}.
\]
For a fixed vertex $i\in Q_0^\tau$, put $r'=v_i+1-r$ and choose coordinates so that
\begin{equation*}
 x_{i,r'}=x_{i,r}^{-1},\qquad D_{i,r'}=D_{i,r}^{-1}.
\end{equation*}
Only $1\leq r\leq\lfloor v_i/2\rfloor$ are independent there.
If $v_i$ is odd, the middle weight is $1$ and has no associated
nonzero coordinate cocharacter. Define the index set
\[
 I_i=\begin{cases}
 \{1,\ldots,v_i\},&i\notin Q_0^\tau\text{ or }i\in Q_S,\\
 \{r:1\leq r\leq v_i,\ r\ne r'\},&i\in Q_O.
 \end{cases}
\]
For $i,j\in Q_0$ and $r\in I_i, s\in I_j$, we have the following commutation rules:
\[D_{i,r}x_{j,s}=\begin{cases}q^{2(\delta_{ij}-\delta_{i,\tau j})\delta_{rs}}x_{j,s}D_{i,r},& \textit{ if both } i,j\notin Q_0^\tau,\\
q^{2(\delta_{rs}-\delta_{rs'})\delta_{ij}}x_{j,s}D_{i,r},& \textit{ if } i,j\in Q_0^\tau,\\
x_{j,s}D_{i,r},& \textit{ others}.
\end{cases} \]
Write $w_{i,b}$ for the torus characters of $W_i$ or $U_i$, according
to the vertex type; there are $m_i$ such characters.

Recall that localization gives an embedding:
\[z^*\circ (\iota_*)^{-1}:\mathcal A_q(G\times F, \bfN)\hookrightarrow K^F(\pt)\otimes_{\mathbb{C}}\calD_q(T).\]

\subsubsection{General linear case}
In this subsection, we take $i\in Q_0^+$. The dominant coweight $\epsilon_{i,1}\in X_*(T)$ is a minuscule coweight for $G$. Hence, the spherical Schubert cell $\Gr_{\epsilon_{i,1}}$ is closed and isomorphic to the projective space $\mathbb{P}^{v_i-1}$. More precisely, it is identified with the moduli space of $\calO$-modules $L$ such that 
\[z\calO\otimes V_i\subset L\subset \calO\otimes V_i,\quad \dim_\mathbb{C}\calO\otimes V_i/L =1.\]
There is a tautological line bundle on $\Gr_{\epsilon_{i,1}}$ whose fiber at $L$ is $\calO\otimes V_i/L$. Thus, the torus weight of this line bundle at the fixed point $z^{\epsilon_{i,r}}$ is $x_{i,r}$. Let $\mathcal{Q}_i$ denote the pullback of this line bundle to $\calR_{\epsilon_{i,1}}$.  
\begin{prop}\label{prop:fmono}
Let $f\in \mathbb{C}[x^{\pm 1}]$ be a Laurent polynomial in one variable. We have 
\begin{align*}
    z^*&\circ(\iota_*)^{-1}\big(f(\mathcal{Q}_i) \cdot [\calO_{\mathcal{R}_{\epsilon_{i,1}}}]\big)
    =\sum_{r=1}^{v_i}f(x_{i,r})
    \prod_{h\in Q_1\setminus Q_1^\tau\atop s(h)=i}\prod_{a=1}^{v_{t(h)}}(1-qx_{i,r}/x_{t(h),a})\\
    &\cdot \prod_{h\in Q_1^\tau\atop s(h)=i}\prod_{1\leq a\leq v_{t(h)}\atop a\neq r}(1-qx_{i,r}/x_{\tau i,a})\cdot \frac{\prod_{1\leq a\leq w_{i}}(1-qx_{i,r}/w_{i,a})}{\prod_{1\leq s\leq v_i, s \neq r}(1-x_{i,s}/x_{i,r})}D_{i,r}
    \in K^F(\pt)\otimes_{\mathbb{C}}\calD_q(T).
\end{align*}
\end{prop}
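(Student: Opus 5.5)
The plan is to specialize the minuscule monopole formula \eqref{eq:minu-mono-op} to $\lambda=\epsilon_{i,1}$ and then evaluate each factor explicitly. The cocharacter $\epsilon_{i,1}$ is minuscule for $G$. Its stabilizer $W_\lambda$ in the factor $W(\GL(V_i))=S_{v_i}$ is $S_{v_i-1}$, and the other Weyl factors fix it. Hence $W^\lambda$ is in bijection with $\{\epsilon_{i,r}:1\le r\le v_i\}$. The class $f(\mathcal{Q}_i)$ corresponds to $f(x_{i,1})\in\mathbb{C}[T]^{W_\lambda}$, because the tautological quotient has weight $x_{i,1}$ at $z^{\epsilon_{i,1}}$. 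Applying $w$ therefore sends it to $f(x_{i,r})$, and it remains to compute the $W$-translate at $z^{\epsilon_{i,r}}$ of the Euler class ratio.

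\textbf{Tangent space.} By \eqref{equ:wts}, $T_{\epsilon_{i,r}}\Gr_\lambda$ is the sum of the root spaces $\mathfrak{g}_\alpha$ with $\langle\epsilon_{i,r},\alpha\rangle=1$. These are the roots $x_{i,r}/x_{i,s}$ for $s\ne r$, each with $n=0$, so no power of $q$ occurs. Then $\Eu=\prod_{s\ne r}(1-e^{-\alpha})=\prod_{s\ne r}(1-x_{i,s}/x_{i,r})$, which is the stated denominator.

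\textbf{Matter term.} Decompose $\bfN$ into weight lines and use \eqref{equ:EuN}. Only weights $\mu$ with $m=\langle\epsilon_{i,r},\mu\rangle<0$ contribute; since $\lambda$ is minuscule here, every such $m$ equals $-1$, and each such line contributes the single factor $(1-qe^{-\mu})$. The summands of \eqref{eq:signedmatter} are treated one at a time.
\begin{enumerate}[(a)]
\item $\Hom(V_i,W_i)$ has weights $w_{i,a}x_{i,r}^{-1}$, which give $\prod_a(1-qx_{i,r}/w_{i,a})$.
\item For $h\in Q_1^+$ with $s(h)=i$, the weights of $\Hom(V_i,V_{t(h)})$ are $x_{t(h),a}/x_{i,r}$, giving $\prod_a(1-qx_{i,r}/x_{t(h),a})$.
\item An arrow $h\in Q_1^+$ with $t(h)=\tau i$, or one with $s(h)=\tau i$ rewritten via $x_{\tau i,a}=x_{i,a}^{-1}$, pairs up with its $\tau$-partner $\tau h$, which has $s(\tau h)=i$. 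This is why the product in the statement runs over all $h\in Q_1\setminus Q_1^\tau$ with $s(h)=i$, using the convention $x_{\tau j,a}=x_{j,a}^{-1}$.
\item The framing of $\tau i$ gives $\Hom(V_{\tau i},W_{\tau i})$ with weights $w_{\tau i,b}x_{i,r}$. These have $m=+1$ and contribute nothing.
\item A fixed arrow $h$ with $s(h)=i$, $t(h)=\tau i$ gives $\Lambda^2V_{\tau i}$, whose weights are $x_{i,a}^{-1}x_{i,b}^{-1}$ for $a<b$. The pairing is $-1$ exactly when one index equals $r$ and the other index $a\ne r$; the value $-2$ cannot occur because the indices are distinct. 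The contribution is $\prod_{a\ne r}(1-qx_{i,r}x_{i,a})=\prod_{a\ne r}(1-qx_{i,r}/x_{\tau i,a})$.
\item Summands not involving $i$ or $\tau i$ pair to zero. If $i$ is adjacent to a fixed vertex $j$, the weights are $x_{j,b}^{\pm1}x_{i,r}^{\mp1}$; these are absorbed into (b) or (c) through $x_{j,b'}=x_{j,b}^{-1}$.
\end{enumerate}

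\textbf{Assembly and the main obstacle.} Collecting these factors and multiplying by $D_{\epsilon_{i,r}}=D_{i,r}$ gives the claimed formula. The main risk is bookkeeping rather than geometry. One must check that the set $\{h\in Q_1\setminus Q_1^\tau: s(h)=i\}$ records each nonfixed matter summand exactly once with the correct sign of pairing. In particular, arrows into $i$ from $Q_1^+$ must be matched with their $\tau$-images. For arrows touching a fixed vertex $j$, the two orbit members $h$ and $\tau h$ correspond to weights $x_{j,b}$ and $x_{j,b'}$, and it must be checked that these are not double-counted. I would check this case-by-case on an arrow orbit $\{h,\tau h\}$, according to whether $i$ is the source or the target of its $Q_1^+$ representative.
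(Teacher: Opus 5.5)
Your proposal is correct and follows the paper's proof. Both specialize \eqref{eq:minu-mono-op} to $\lambda=\epsilon_{i,1}$, read off $\prod_{s\ne r}(1-x_{i,s}/x_{i,r})$ from \eqref{equ:wts}, compute the matter Euler class summand by summand via \eqref{equ:EuN}, and merge the $Q_1^+$-arrows out of $i$ with the $Q_1^+$-arrows into $\tau i$ into the product over $h\in Q_1\setminus Q_1^\tau$ with $s(h)=i$.

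There are two small slips, neither affecting the result. In (c), an arrow with $s(h)=\tau i$ has $\tau$-partner with \emph{target} $i$; like the $\tau i$-framing in (d) and the $Q_1^+$-arrows into $i$, it pairs nonnegatively with $\epsilon_{i,r}$ and contributes nothing. Also, the bound $m\ge -1$ comes from inspecting the weights of $\bfN$, as you do in (e), not from minusculity of $\lambda$, which only constrains roots.
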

\begin{proof}
For $1\leq r\leq v_i$, \eqref{equ:wts} gives
$$\Eu(T_{\epsilon_{i,r}} \Gr_{\epsilon_{i,1}})
=\prod_{1\leq s\leq v_i, s \neq r}(1-x_{i,s}/x_{i,r}).$$
Let us compute 
$\Eu\big(z^{\epsilon_{i,r}}\bfN_{\calO}/(z^{\epsilon_{i,r}}\bfN_{\calO}\cap \bfN_{\calO})\big)$ by \eqref{equ:EuN}. Recall $\bfN$ from \eqref{eq:signedmatter}
and $\tilde{\mathbb{C}}^*_q$ acts on $\bfN$ by weight $q$. Since $i\in Q_0^+$, only the first three factors will have contributions, and they are given by the following formulae respectively:
\begin{align*}
& \quad \prod_{h\in Q_1^+\atop s(h)=i}
\prod_{b=1}^{v_{t(h)}}
(1-qx_{i,r}/x_{t(h),b})
\prod_{h\in Q_1^+\atop t(h)=\tau i}
\prod_{a=1}^{v_{s(h)}}
(1-qx_{s(h),a}/x_{\tau i,r})=\prod_{h\in Q_1\setminus Q_1^\tau\atop s(h)=i}\prod_{a=1}^{v_{t(h)}}
(1-qx_{i,r}/x_{t(h),a}),
\end{align*}
$$\prod_{h\in Q_1^\tau\atop s(h)=i}\prod_{1\leq a\leq v_{t(h)}\atop a\neq r}(1-qx_{i,r}/x_{\tau i,a}),\quad\textit{ and } \prod_{1\leq a\leq w_{i}}(1-qx_{i,r}/w_{i,a}).$$
Substituting all of these into \eqref{eq:minu-mono-op}, we get the desired formula. 
\end{proof}

Now let us consider the case of $\lambda=-\epsilon_{i,v_i}\in X_*(T)$. This is also a minuscule coweight, and the spherical Schubert cell $\Gr_{-\epsilon_{i,v_i}}$ is also isomorphic to the projective space $\mathbb{P}^{v_i-1}$. More precisely, it is identified with the moduli space of $\calO$-modules $L$ such that 
\[\calO\otimes V_i\subset L\subset z^{-1}\calO\otimes V_i,\quad \dim_\mathbb{C}L/(\calO\otimes V_i) =1.\]
There is a tautological line bundle on $\Gr_{-\epsilon_{i,v_i}}$ whose fiber at $L$ is $L/(\calO\otimes V_i)$. Therefore, the torus weight of the fiber at the torus fixed point $z^{-\epsilon_{i,r}}$ is $q^{-2}x_{i,r}$. Let $\mathcal{S}_i$ denote the pullback of this line bundle to $\calR_{-\epsilon_{i,v_i}}$. Similar to the previous proposition, we have the following formula, whose proof is almost the same. 
\begin{prop}\label{prop:fmono2}
Let $f\in \mathbb{C}[x^{\pm 1}]$ be a Laurent polynomial in one variable. We have 
\begin{align*}
    z^*&\circ(\iota_*)^{-1}\big(f(\mathcal{S}_i)\cdot [\mathcal{O}_{\mathcal{R}_{-\epsilon_{i,v_i}}}]\big)
    =\sum_{r=1}^{v_i}f(q^{-2}x_{i,r})
    \prod_{h\in Q_1\setminus Q_1^\tau \atop s(h)=\tau i}\prod_{b=1}^{v_{t(h)}}(1-qx_{\tau i,r}/x_{t(h),b})\\
    &\cdot\prod_{h\in Q_1^\tau\atop t(h)=i}\prod_{1\leq a\leq v_{t(h)}\atop a\neq r}(1-qx_{i,a}^{-1}x_{i,r}^{-1})\cdot \frac{\prod_{1\leq a\leq w_{ \tau i}}(1-qx_{\tau i,r}/w_{\tau i,a})}{\prod_{1\leq s\leq v_i, s \neq r}(1-x_{i,r}/x_{i,s})}D_{i,r}^{-1}\in K^F(\pt)\otimes_{\mathbb{C}}\calD_q(T).
\end{align*}
\end{prop}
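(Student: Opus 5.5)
The plan is to apply the minuscule formula \eqref{eq:minu-mono-op} to $\lambda=-\epsilon_{i,v_i}$, exactly as in the proof of Proposition~\ref{prop:fmono}. The $W$-orbit of $\lambda$ is $\{-\epsilon_{i,r}:1\le r\le v_i\}$, so the sum over $W^\lambda$ becomes a sum over $r$, and each term carries $D_{-\epsilon_{i,r}}=D_{i,r}^{-1}$. The dressing $f(\mathcal S_i)$ restricts at the fixed point $z^{-\epsilon_{i,r}}$ to $f$ evaluated at the weight of $L/(\calO\otimes V_i)$. This quotient is spanned by $z^{-1}e_r$. By the loop rotation convention, in which $z$ has weight $q^2$, its character is $q^{-2}x_{i,r}$, which gives the factor $f(q^{-2}x_{i,r})$.

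Next comes the tangent space. By \eqref{equ:wts} with $\mu=-\epsilon_{i,r}$, the roots with $\langle\mu,\alpha\rangle=1$ are $\alpha=x_{i,s}/x_{i,r}$ for $s\ne r$, each contributing the $z^0$ part only. The Euler class is then $\prod_{s\ne r}(1-e^{-\alpha})=\prod_{s\ne r}(1-x_{i,r}/x_{i,s})$, which is the stated denominator.

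The main step is the matter Euler class, computed by \eqref{equ:EuN} summand by summand. A weight $\chi$ of $\bfN$ contributes only if $m=\langle-\epsilon_{i,r},\chi\rangle<0$. It contributes $(1-qe^{-\chi})$ when $m=-1$, which is the only case that occurs since every weight is linear in $x_{i,\cdot}$ with coefficient in $\{-1,0,1\}$. We therefore need exactly those weights in which $x_{i,r}$ appears with exponent $+1$. Equivalently, writing $x_{\tau i,r}=x_{i,r}^{-1}$, these are the weights with $x_{\tau i,r}^{-1}$, so that $e^{-\chi}$ contains $x_{\tau i,r}$. I will go through the summands of \eqref{eq:signedmatter} in turn.

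\begin{enumerate}[(a)]
\item \emph{Arrow summands.} Consider $\Hom(V_{s(h)},V_{t(h)})$ for $h\in Q_1^+$. Each $h$ with $t(h)=i$ gives weights $x_{i,r}/x_{s(h),b}$. Each $h$ with $s(h)=\tau i$ gives weights $x_{t(h),b}/x_{\tau i,r}=x_{t(h),b}x_{i,r}$. Reindexing via $\tau$, these two families combine into the product over all non-fixed arrows $h$ with $s(h)=\tau i$, exactly as in the first displayed identity in the proof of Proposition~\ref{prop:fmono}. This yields $\prod_b(1-qx_{\tau i,r}/x_{t(h),b})$.
\item \emph{Fixed arrows.} A fixed arrow $h$ with $t(h)=i$ contributes $\Lambda^2V_i$, whose weights are $x_{i,a}x_{i,b}$ with $a<b$. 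Those containing $x_{i,r}$ are $x_{i,a}x_{i,r}$ with $a\ne r$, and they give $\prod_{a\ne r}(1-qx_{i,a}^{-1}x_{i,r}^{-1})$.
\item \emph{Framing at $\tau i$.} The summand $\Hom(V_{\tau i},W_{\tau i})$ has weights $w_{\tau i,a}/x_{\tau i,r}$, which contribute $\prod_a(1-qx_{\tau i,r}/w_{\tau i,a})$.
\item \emph{Framing at $i$.} The summand $\Hom(V_i,W_i)$ has weights $w/x_{i,r}$, on which $\langle-\epsilon_{i,r},\cdot\rangle=+1$, so it contributes nothing.
\end{enumerate}

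The only real obstacle is bookkeeping: getting the identification $V_{\tau i}=V_i^*$, the reindexing of the arrow orbits, and the sign of the pairing correct. I expect no geometric difficulty. Substituting all of these factors into \eqref{eq:minu-mono-op} then gives the stated formula.
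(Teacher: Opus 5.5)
Your proposal is correct and is essentially the paper's argument. The paper proves this exactly as in Proposition~\ref{prop:fmono}: it substitutes the tangent Euler class $\prod_{s\neq r}(1-x_{i,r}/x_{i,s})$ and the matter Euler class at $z^{-\epsilon_{i,r}}$ into the minuscule formula \eqref{eq:minu-mono-op}, with the dressing evaluated at the fiber weight $q^{-2}x_{i,r}$, and your bookkeeping (the $\tau$-reindexing of arrow orbits, the $\Lambda^2V_i$ factor arising only when the fixed arrow ends at $i$, and only the framing at $\tau i$ contributing) reproduces each factor of the stated formula.
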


\subsubsection{Affine Grassmannian for orthogonal and symplectic groups.}\label{sec:orthsympAff}
In this section, we recall some basic facts about the affine Grassmannian for the orthogonal and symplectic groups.
Let $V$ be an orthogonal or symplectic vector space, and let $G:=\SO(V)$ if $V$ is orthogonal or $G:=\Sp(V)$ if $V$ is symplectic. Throughout this paper, we assume $\dim V\geq 3$ if $V$ is an orthogonal vector space. Pick a maximal torus $T$ in $G$, and let $\epsilon_r$ be the $r$-th coordinate cocharacter for $T$.

For a lattice $L$ in $V(\calK)$, we define its dual $$L^\vee:=\{x\in V(\calK)\mid (x,L)\subseteq \calO\},$$ where the pairing $(-,-)$ is induced from the one on $V$. Then the affine Grassmannian $\Gr_{G}$ is the set of self-dual lattices in $V(\calK)$, and the Schubert cell
\begin{align*}
    \Gr_{G,\epsilon_{1}}=&\{\textit{self-dual lattices }  L\subset V(\calK)\mid\\
    &zV(\calO)\subset L\subset z^{-1}V(\calO), \dim_{\mathbb{C}} \frac{V(\calO)}{V(\calO)\cap L}=1\}.
\end{align*}
There is a tautological line bundle $\mathcal{L}$ on $\Gr_{G,\epsilon_{1}}$, whose fiber at a lattice $L$ is $\frac{V(\calO)}{V(\calO)\cap L}$. Under the isomorphism $$K^{G}(\Gr_{G,\epsilon_{1}})\simeq K^{G}(G/P_{\epsilon_{1}})\simeq \mathbb{C}[T]^{W_{\epsilon_{1}}},$$ 
the line bundle $\mathcal{L}$ corresponds to $x_{1}$. The torus fixed points in $\Gr_{G,\epsilon_{1}}$ are $$\big\{z^{\pm\epsilon_{r}}G(\calO)/G(\calO)\mid 1\leq r\leq \lfloor\frac{\dim V}{2}\rfloor\big\}.$$ To simplify notation, we will just use $z^{\pm\epsilon_{r}}$ to denote the corresponding fixed points.

\subsubsection{Orthogonal case}
In this section, $i\in Q_O$.
The cocharacter $\epsilon_{i,1}$ is a minuscule coweight for $\SO(V_i)$. We recall some notations from Section \ref{sec:orthsympAff}.
Let $\mathcal{L}_i$ denote
the tautological line bundle on $\Gr_{\SO(V_i),\epsilon_{i,1}}$. Under the isomorphism $$K^{\SO(V_i)}(\Gr_{\SO(V_i),\epsilon_{i,1}})\simeq \mathbb{C}[T_i]^{W_{\epsilon_{i,1}}},$$ 
the line bundle $\mathcal{L}_i$ corresponds to $x_{i,1}$. The torus fixed points in $\Gr_{\SO(V_i),\epsilon_{i,1}}$ are $\{z^{\epsilon_{i,r}}\mid r\in I_i\}$.

\begin{prop}\label{prop:orthmon}
Let $f\in \mathbb{C}[x^{\pm 1}]$ be a Laurent polynomial in one variable. Then

\begin{align*}
    z^*\circ(\iota_*)^{-1}&\big(f(\mathcal{L}_i) \cdot[\calO_{\mathcal{R}_{\epsilon_{i,1}}}]\big)
    =\sum_{r\in I_i}f(x_{i,r})\prod_{h\in Q_1\atop s(h)=i}\prod_{b=1}^{v_{t(h)}}(1-qx_{i,r}x_{t(h),b}^{-1})\\
    &\cdot \prod_{a=1}^{m_i}(1-qx_{i,r}w_{i,a}^{-1})\cdot \frac{1}{\prod_{\substack{1\leq s\leq v_i\\ s\neq r,r'}}(1-x_{i,s}x_{i,r}^{-1})}
    D_{i,r}.
\end{align*}
\end{prop}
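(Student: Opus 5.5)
We apply the minuscule monopole formula \eqref{eq:minu-mono-op} with $\lambda=\epsilon_{i,1}$, which is minuscule for $\SO(V_i)$ (we use $\dim V_i\geq 3$), and with $f$ replaced by $f(\mathcal L_i)$, i.e.\ by $f(x_{i,1})\in\mathbb{C}[T]^{W_\lambda}$. The Weyl orbit of $\epsilon_{i,1}$ is $\{\pm\epsilon_{i,r}\}=\{\epsilon_{i,r}\mid r\in I_i\}$, using the convention $\epsilon_{i,r'}=-\epsilon_{i,r}$. Hence the sum over $W^\lambda$ becomes a sum over $r\in I_i$. The term indexed by $r$ is obtained by transporting $f(x_{i,1})$ to $f(x_{i,r})$ and $D_{\epsilon_{i,1}}$ to $D_{i,r}$. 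The work is therefore to compute the two Euler classes at the fixed point $z^{\epsilon_{i,r}}$.

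For the denominator we use \eqref{equ:wts}. The roots $\alpha$ of $\SO(V_i)$ with $\langle\epsilon_{i,r},\alpha\rangle\geq 1$ all pair to exactly $1$, so only $n=0$ contributes. In the coordinates $x_{i,s}$, these roots are the characters $x_{i,r}x_{i,s}^{-1}$ for $s\neq r,r'$. Two facts make this uniform over $r\in I_i$: when $v_i$ is odd, the middle coordinate $x_{i,s}=1$ gives the short root $x_{i,r}$; and the index $s=r'$ is excluded because $x_{i,r}/x_{i,r'}=x_{i,r}^2$ is not a root of type $B/D$. With $\Eu$ taking $e^{-\alpha}$, we get
\[\Eu(T_{\epsilon_{i,r}}\Gr_{\epsilon_{i,1}})=\prod_{s\neq r,r'}(1-x_{i,s}x_{i,r}^{-1}).\]
This is the product in the statement. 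I will check it against the equivalent description of $T\Gr$ as $\mathfrak g\otimes(\calO/z)$ twisted by $z^\lambda$.

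For the numerator we use \eqref{equ:EuN} summand by summand in \eqref{eq:signedmatter}. Only the weights $\mu$ with $\langle\epsilon_{i,r},\mu\rangle=-1$ contribute, and each contributes a factor $(1-qe^{-\mu})$. Since fixed vertices are pairwise nonadjacent, the summand $\Lambda^2V$ (which sits on fixed arrows between $\tau$-paired vertices) never involves $V_i$. For an arrow $h\in Q_1^+$ with an endpoint at $i$, the space $\Hom(V_{s(h)},V_{t(h)})$ has weights $x_{t,b}x_{s,a}^{-1}$. Suppose $t(h)=i$ and $s(h)=j$. The contributing weights are then $x_{i,r'}x_{j,b}^{-1}=x_{i,r}^{-1}x_{j,b}^{-1}$, which give the factors $(1-qx_{i,r}x_{j,b})=(1-qx_{i,r}x_{\tau j,b}^{-1})$. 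Suppose instead $s(h)=i$. The contributing weights are $x_{t,b}x_{i,r}^{-1}$, which give the factors $(1-qx_{i,r}x_{t(h),b}^{-1})$.

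Next I would reindex arrows via $\tau$, and this is the main bookkeeping obstacle. For a fixed vertex $i$, the orbit $\{h,\tau h\}$ of an arrow ending at $i$ contains $\tau h$, which starts at $\tau(i)=i$ and ends at $\tau s(h)$. The arrows of $Q_1$ with source $i$ are thus exactly one per orbit, namely the representative from $Q_1^+$ or its $\tau$-partner. The two cases above therefore combine into $\prod_{h\in Q_1,\,s(h)=i}\prod_b(1-qx_{i,r}x_{t(h),b}^{-1})$. I also need to confirm that the weight sets are counted with the right multiplicities: the $\pm\epsilon$ identification $x_{\tau j,b}=x_{j,b}^{-1}$ holds for nonfixed $j$, and the multiplicity is $v_j$.

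Finally, the framing summand $U_i\otimes V_i$ has weights $w_{i,a}x_{i,s}$. The contributing ones are those with $s=r'$, and they give $(1-qx_{i,r}w_{i,a}^{-1})$ for $a=1,\ldots,m_i$. Substituting the numerator and denominator into \eqref{eq:minu-mono-op} yields the proposition.
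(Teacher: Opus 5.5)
Your proposal is correct and follows the paper's proof: apply \eqref{eq:minu-mono-op} at $\lambda=\epsilon_{i,1}$, compute the tangent Euler class from \eqref{equ:wts} and the matter Euler class from \eqref{equ:EuN}. Your $\tau$-reindexing of the arrows at $i$ and your remark on the middle weight just make explicit the bookkeeping the paper leaves to the reader; the one slip is cosmetic, since the $\Lambda^2$ summands avoid $V_i$ because a $\tau$-fixed arrow at a fixed vertex would be a loop, not because fixed vertices are pairwise nonadjacent.
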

\begin{proof}
    In this proof, $r\in I_i$. By \eqref{equ:wts}, 
    \[\Eu(T_{\epsilon_{i,r}}\Gr_{\SO(V_i),\epsilon_{i,1}})=\prod_{\substack{1\leq s\leq v_i\\ s\neq r,r'}}(1-x_{i,s}x_{i,r}^{-1}).\]
    Recall $\bfN$ from \eqref{eq:signedmatter}.
    By \eqref{equ:EuN}, we get
    \begin{align*}
        \Eu\bigg(z^{\epsilon_{i,r}}\bfN(\calO)/(z^{\epsilon_{i,r}}\bfN(\calO)\cap \bfN(\calO))\bigg)
        =\prod_{h\in Q_1\atop s(h)=i}\prod_{b=1}^{v_{t(h)}}(1-qx_{i,r}x_{t(h),b}^{-1})\prod_{a=1}^{m_i}(1-qx_{i,r}w_{i,a}^{-1}),
    \end{align*}
    Substituting these into \eqref{eq:minu-mono-op}, we get the desired formula. 
\end{proof}

\subsubsection{Symplectic case}
In this section, $i\in Q_S$.
The first coordinate cocharacter $\epsilon_{i,1}$ is quasi-minuscule for $\Sp(V_i)$. 
Let $\mathcal{L}_i$ denote
the tautological line bundle on $\Gr_{\Sp(V_i),\epsilon_{i,1}}$. Under the isomorphism $$K^{\Sp(V_i)}(\Gr_{\Sp(V_i),\epsilon_{i,1}})\simeq \mathbb{C}[T_i]^{W_{\epsilon_{i,1}}},$$ 
the line bundle $\mathcal{L}_i$ corresponds to $x_{i,1}$. The torus fixed points in $\Gr_{\Sp(V_i),\epsilon_{i,1}}$ are denoted by $z^{\epsilon_{i,r}}$ for $r\in I_i$. We will use the notations from Theorem \ref{thm:quasi}.
\begin{prop}\label{prop:sympmon}
Let $f\in \mathbb{C}[x^{\pm 1}]$ be a Laurent polynomial in one variable. Then
\begin{align*}
    z^*\circ (\iota_*)^{-1}&\bigg(\tilde{p}_*\big(f(\mathcal{L}_i)\cdot [\calO_{\tilde{\calR}_{G,\bfN,\leq \epsilon_{i,1}}}]\big)\bigg)=\sum_{r=1}^{v_i}f(x_{i,r})\prod_{h\in Q_1\atop s(h)=i}\prod_{a=1}^{v_{t(h)}}(1-qx_{i,r}x_{t(h),a}^{-1})\\
    &\cdot\prod_{a=1}^{m_i}(1-qx_{i,r}w_{i,a}^{-1})\cdot \frac{1}{(1-q^{-2}x_{i,r}^{-2})\prod_{1\leq s\neq r\leq v_i}(1-x_{i,s}x_{i,r}^{-1})}
    (D_{i,r}-1)+\cdots,\\
\end{align*}
where $\cdots$ denotes some polynomial in $K^{G\times F\times \tilde{\mathbb{C}}^\times_q}(\pt)$.
\end{prop}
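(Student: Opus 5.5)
This is a direct specialization of Theorem~\ref{thm:quasi} to $\lambda=\epsilon_{i,1}$, the quasi-minuscule coweight of the $\Sp(V_i)$ factor. The other factors of $G$ and the flavor group $F$ act only as spectators: $\Gr_{G,\lambda}=\Gr_{\Sp(V_i),\lambda}$, the other tori act trivially on it, and the flavor version of Theorem~\ref{thm:quasi} holds by the remark in the subsection on flavor symmetry. Under $K^{G}(G/P_\lambda)\simeq\mathbb C[T]^{W_\lambda}$ the class $f(\mathcal L_i)$ corresponds to $f(x_{i,1})$. The orbit $W\cdot\lambda=\{\epsilon_{i,r}:1\le r\le v_i\}$, using the convention $\epsilon_{i,r'}=-\epsilon_{i,r}$, is indexed by $r=1,\dots,v_i$. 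Applying $w$ sends $x_{i,1}\mapsto x_{i,r}$ and $D_{w\lambda}=D_{i,r}$. So it suffices to compute the two Euler classes at the fixed point $z^{\epsilon_{i,r}}$ and substitute them into the formula of Theorem~\ref{thm:quasi}. The $\cdots$ term is then the element of $K^{G\times F\times\widetilde{\mathbb C}^\times_q}(\pt)$ produced there.

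\emph{Tangent space.} I would use \eqref{equ:wts} with $\mu=\epsilon_{i,r}$. The roots of $\Sp(V_i)$ pairing positively with $\epsilon_{i,r}$ are of three kinds.
\begin{itemize}
\item $x_{i,r}x_{i,s}^{-1}$ for $s\ne r,r'$, with pairing $1$.
\item $x_{i,r}x_{i,s}$ for $s\ne r,r'$, which equals $x_{i,r}x_{i,s'}^{-1}$ and again has pairing $1$.
\item The long root $x_{i,r}^2$, with pairing $2$, contributing $\mathfrak g_\alpha\oplus\mathfrak g_\alpha z$.
\end{itemize}
The first two kinds together run over $s\ne r$, $1\le s\le v_i$, and give $\prod_{s\ne r}(1-x_{i,s}x_{i,r}^{-1})$; the sign conventions match those of the proof of Proposition~\ref{prop:orthmon}. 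The long root gives weights $x_{i,r}^2$ and $x_{i,r}^2q^2$. The $z^0$ piece has weight $x_{i,r}^2$, and the $s=r'$ term of the product already equals $1-x_{i,r}^{-2}$. So the only extra factor is the $z$-piece, which gives $1-q^{-2}x_{i,r}^{-2}$; this is consistent with $\Eu(T\mathbb P)=(1-q^{-2}e^{-\theta})\Eu(T G/P)$ in the proof of Theorem~\ref{thm:quasi}, with $\theta=2x_{i,1}$. This yields the claimed denominator.

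\emph{Matter term.} I would use \eqref{equ:EuN} summand by summand on \eqref{eq:signedmatter}, as in the proof of Proposition~\ref{prop:orthmon}. Since $i$ is fixed and fixed vertices are pairwise nonadjacent, $\Lambda^2V_{t(h)}$ never involves $V_i$. The summands $\Hom(V_{s(h)},V_{t(h)})$ with $h\in Q_1^+$ and one endpoint equal to $i$ contribute only when the weight pairs to $-1$ with $\epsilon_{i,r}$. This gives one factor $(1-qx_{i,r}x_{j,a}^{-1})$ for each weight, and there is no $q^3$ term because all pairings are $\pm1$. Reindexing through $\tau$, using $x_{\tau j,a}=x_{j,a}^{-1}$, assembles these into the product over all $h\in Q_1$ with $s(h)=i$. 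The framing summand $U_i\otimes V_i$ has weights $u_a^{\pm}$ times $x_{i,s}$; the relevant ones give $\prod_a(1-qx_{i,r}w_{i,a}^{-1})$ with $m_i$ factors.

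\emph{Main obstacle.} The delicate step is the bookkeeping in the matter term: checking that the orbit reindexing $h\leftrightarrow\tau h$ on arrows at a fixed vertex reproduces exactly $\prod_{h:s(h)=i}$, neither doubling nor omitting arrows, and that the weight conventions for $U_i$ versus $U_i^*$ match the sign of $w_{i,a}^{-1}$ in the statement. I would handle this by listing, for a single arrow orbit $\{h,\tau h\}$ with $s(h)=i$, the weights $x_{i,s}x_{j,b}^{-1}$ of $\Hom(V_i,V_j)$ and checking which pair to $-1$. This is the same check as in Proposition~\ref{prop:orthmon}, so the formula transfers verbatim, with $I_i$ replaced by $\{1,\dots,v_i\}$.
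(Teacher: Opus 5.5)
Your proposal is correct and is the paper's own argument: specialize Theorem~\ref{thm:quasi} to $\lambda=\epsilon_{i,1}$, then substitute the tangent Euler class $\Eu(T_{\epsilon_{i,r}}\Gr_{\Sp(V_i),\epsilon_{i,1}})=(1-q^{-2}x_{i,r}^{-2})\prod_{s\neq r}(1-x_{i,s}x_{i,r}^{-1})$ and the matter Euler class; the paper only states these two classes, while you derive them. The one slip is cosmetic: the short roots give only the factors with $s\neq r,r'$, and the $s=r'$ factor $1-x_{i,r}^{-2}$ comes from the $z^0$ piece of the long root, which your next sentence already accounts for correctly.
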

\begin{proof}
    As in the orthogonal case, this follows from
    \[\Eu(T_{\epsilon_{i,r}}\Gr_{\Sp(V_i),\epsilon_{i,1}})
        =(1-q^{-2}x_{i,r}^{-2})\prod_{1\leq s\neq r\leq v_i}(1-x_{i,s}x_{i,r}^{-1}),\]
        and
    \begin{align*}
        \Eu\bigg(z^{\epsilon_{i,r}}\bfN(\calO)/(z^{\epsilon_{i,r}}\bfN(\calO)\cap \bfN(\calO))\bigg)
        =\prod_{h\in Q_1\atop s(h)=i}\prod_{a=1}^{v_{t(h)}}(1-qx_{i,r}x_{t(h),a}^{-1})\prod_{a=1}^{m_i}(1-qx_{i,r}w_{i,a}^{-1}).
    \end{align*}
\end{proof}

\begin{lemma}\label{lem:sympmono}
    Let 
    \[g(u):=\prod_{h\in Q_1\atop s(h)=i}\prod_{a=1}^{v_{t(h)}}(1-qux_{t(h),a}^{-1})\prod_{a=1}^{m_i}(1-quw_{i,a}^{-1}).\] Then for any $f\in \mathbb{C}[x^{\pm 1}]$,
    \begin{align*}
    z^*\circ (\iota_*)^{-1}&\bigg(\tilde{p}_*\big(f(\mathcal{L}_i)\cdot [\calO_{\tilde{\calR}_{G,\bfN,\leq \epsilon_{i,1}}}]\big)\bigg)=\sum_{r=1}^{v_i}\frac{f(x_{i,r})g(x_{i,r})}{(1-q^{-2}x_{i,r}^{-2})\prod_{1\leq s\neq r\leq v_i}(1-x_{i,s}x_{i,r}^{-1})}
    D_{i,r}\\
    &+\frac{1}{2}\frac{f(q^{-1})g(q^{-1})}{\prod_{r=1}^{v_i}(1-qx_{i,r})}+\frac{1}{2}\frac{f(-q^{-1})g(-q^{-1})}{\prod_{r=1}^{v_i}(1+qx_{i,r})}+\cdots\in \mathcal{A}_q,
\end{align*}
where $\cdots$ is some element in $K^{G\times F\times \tilde{\mathbb{C}}^\times_q}(\pt)$.
\end{lemma}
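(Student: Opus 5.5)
Starting from Proposition~\ref{prop:sympmon}, write $\Phi(u):=\frac{f(u)g(u)}{1-q^{-2}u^{-2}}$ and
\[
 S:=\sum_{r=1}^{v_i}\frac{\Phi(x_{i,r})}{\prod_{1\leq s\neq r\leq v_i}(1-x_{i,s}x_{i,r}^{-1})},
\]
where the sum runs over all $r$, with $x_{i,r'}=x_{i,r}^{-1}$. Proposition~\ref{prop:sympmon} gives the operator as
\[
 \sum_r(\cdots)D_{i,r}-S+(\text{element of }K^{G\times F\times\widetilde{\mathbb C}^\times_q}(\pt)).
\]
It therefore suffices to show that
\[
 S=-\frac12\frac{f(q^{-1})g(q^{-1})}{\prod_{r}(1-qx_{i,r})}-\frac12\frac{f(-q^{-1})g(-q^{-1})}{\prod_r(1+qx_{i,r})}+P,
\]
with $P$ a $W$-invariant Laurent polynomial, so that $P$ is absorbed into $\cdots$.

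To evaluate $S$, I would use a residue computation in one auxiliary variable $u$. Since the $x_{i,s}$ for $s\ne r$ run over $x_{i,r}^{-1}$ and all the other coordinates, we have
\[
 \prod_{s\neq r}(1-x_{i,s}/u)\big|_{u=x_{i,r}}=\frac{d}{du}\Bigl(\prod_{s=1}^{v_i}(1-x_{i,s}/u)\Bigr)\Big|_{u=x_{i,r}}\cdot x_{i,r}.
\]
Consider the rational $1$-form
\[
 \omega=\frac{\Phi(u)}{\prod_{s=1}^{v_i}(1-x_{i,s}u^{-1})}\frac{du}{u}.
\]
Its residues at the points $u=x_{i,s}$ are exactly the terms of $S$. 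Its remaining poles are at $u=\pm q^{-1}$, coming from $1-q^{-2}u^{-2}$, and at $u=0,\infty$. The residue at $u=\pm q^{-1}$ equals
\[
 \frac{f(\pm q^{-1})g(\pm q^{-1})}{2\prod_s(1\mp q x_{i,s})},
\]
after using $\prod_s(1-x_{i,s}/u)=\prod_s(1-x_{i,s}u)$, which holds by the pairing symmetry $x\leftrightarrow x^{-1}$. The factor $\frac12$ comes from $\frac{d}{du}(1-q^{-2}u^{-2})=2q^{-2}u^{-3}$. Applying the residue theorem gives $S$ as minus these two terms minus the residues at $0$ and $\infty$.

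The main remaining obstacle is to show that the residues at $0$ and $\infty$ are $W$-invariant Laurent polynomials in the $x_{i,s}$, with coefficients in the flavor characters, $q$, and the neighbouring variables. For this I would expand $\prod_s(1-x_{i,s}u^{-1})^{-1}$ as a power series in $u^{\mp1}$ near each point. Its coefficients are complete homogeneous symmetric functions in the full multiset $\{x_{i,s}^{\pm1}\}$, hence $W$-invariant polynomials. Since $f\cdot g$ is a Laurent polynomial, only finitely many terms contribute, so the residues are polynomial and invariant. Invariance is also clear from the start, because $S$ itself is $W$-symmetric.

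Finally I would check the sign conventions, in particular the sign relating $S$ to $-\sum(\cdots)$ in $(D_{i,r}-1)$, against the stated formula. I would then combine this with Theorem~\ref{thm:quasi} to conclude membership in $\mathcal A_q$.
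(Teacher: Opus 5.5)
Your argument is correct, but it organizes the computation differently from the paper. The paper first uses the partial fraction $\frac{1}{1-q^{-2}x^{-2}}=\frac12\bigl(\frac{1}{1-q^{-1}x^{-1}}+\frac{1}{1+q^{-1}x^{-1}}\bigr)$ and subtracts $f(\pm q^{-1})g(\pm q^{-1})$ from the numerators. The resulting sums have Laurent-polynomial numerators, so they lie in $K^{G\times F\times\widetilde{\mathbb{C}}^\times_q}(\pt)$ by the localization theorem on $\Sp(V_i)/P_{\epsilon_{i,1}}$. The residue theorem is then needed only for the universal identity $\sum_{r}\frac{1}{(1\mp q^{-1}x_{i,r}^{-1})\prod_{s\neq r}(1-x_{i,s}x_{i,r}^{-1})}=1-\frac{1}{\prod_r(1\mp qx_{i,r})}$.

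You instead apply the residue theorem once to the full form $\omega$. The appeal to geometric localization is thereby replaced by your direct check that $\Res_{u=0}\omega$ and $\Res_{u=\infty}\omega$ are $W$-invariant Laurent polynomials; this is the same residue mechanism that underlies localization on $\mathbb{P}^{v_i-1}$. Your route is more self-contained and even gives an explicit formula for the term $\cdots$ in terms of complete homogeneous symmetric functions. The paper's route keeps the geometric input and the singular part visibly separate.

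When you carry out the expansions, remember also to expand $(1-q^{-2}u^{-2})^{-1}$, which contributes only powers of $q^{\pm 2}$. At $u=0$, write $\prod_s(1-x_{i,s}u^{-1})^{-1}=u^{v_i}\prod_s(u-x_{i,s})^{-1}$ and use $\prod_s(-x_{i,s})=1$.

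One slip to fix: the identity $\prod_s(1-x_{i,s}/u)=\prod_s(1-x_{i,s}u)$ is false. The pairing symmetry actually gives $\prod_s(1-x_{i,s}/u)=u^{-v_i}\prod_s(1-x_{i,s}u)$. You do not need it, since evaluating $\prod_s(1-x_{i,s}/u)$ at $u=\pm q^{-1}$ directly gives $\prod_s(1\mp qx_{i,s})$. With that correction the signs work out as you hoped: $-S$ equals the two displayed scalar terms plus $\Res_{u=0}\omega+\Res_{u=\infty}\omega$.
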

\begin{proof}
First of all,
\begin{align*}
    &\sum_{r=1}^{v_i}\frac{f(x_{i,r})g(x_{i,r})}{(1-q^{-2}x_{i,r}^{-2})\prod_{1\leq s\neq r\leq v_i}(1-x_{i,s}x_{i,r}^{-1})}\\
    =&\frac{1}{2}\sum_{r=1}^{v_i}\frac{f(x_{i,r})g(x_{i,r})-f(q^{-1})g(q^{-1})}{(1-q^{-1}x_{i,r}^{-1})\prod_{1\leq s\neq r\leq v_i}(1-x_{i,s}x_{i,r}^{-1})}+\frac{1}{2}\sum_{r=1}^{v_i}\frac{f(x_{i,r})g(x_{i,r})-f(-q^{-1})g(-q^{-1})}{(1+q^{-1}x_{i,r}^{-1})\prod_{1\leq s\neq r\leq v_i}(1-x_{i,s}x_{i,r}^{-1})}\\
    &+\frac{1}{2}\sum_{r=1}^{v_i}\frac{f(q^{-1})g(q^{-1})}{(1-q^{-1}x_{i,r}^{-1})\prod_{1\leq s\neq r\leq v_i}(1-x_{i,s}x_{i,r}^{-1})}+\frac{1}{2}\sum_{r=1}^{v_i}\frac{f(-q^{-1})g(-q^{-1})}{(1+q^{-1}x_{i,r}^{-1})\prod_{1\leq s\neq r\leq v_i}(1-x_{i,s}x_{i,r}^{-1})}.
\end{align*}
By the localization theorem on $\Sp(V_i)/P_{\epsilon_{i,1}}$, the sum of the first two summands lies in $K^{G\times F\times \tilde{\mathbb{C}}^\times_q}(\pt)$.
Moreover, applying the residue theorem to 
\[\frac{z^{v_i}}{(z-q^{-1})\prod_{1\leq  r\leq v_i}(z-x_{i,r})}dz,\]
we get
\[\sum_{r=1}^{v_i}\frac{1}{(1-q^{-1}x_{i,r}^{-1})\prod_{1\leq s\neq r\leq v_i}(1-x_{i,s}x_{i,r}^{-1})}=1-\frac{1}{\prod_{r=1}^{v_i}(1-qx_{i,r})}.\]
Since $f(q^{-1})g(q^{-1}), f(-q^{-1})g(-q^{-1})\in K^{G\times F\times \tilde{\mathbb{C}}^\times_q}(\pt)$, we get by Proposition \ref{prop:sympmon} that 
\begin{align*}
    z^*\circ (\iota_*)^{-1}&\bigg(\tilde{p}_*\big(f(\mathcal{L}_i)\cdot [\calO_{\tilde{\calR}_{G,\bfN,\leq \epsilon_{i,1}}}]\big)\bigg)=\sum_{r=1}^{v_i}\frac{f(x_{i,r})g(x_{i,r})}{(1-q^{-2}x_{i,r}^{-2})\prod_{1\leq s\neq r\leq v_i}(1-x_{i,s}x_{i,r}^{-1})}
    D_{i,r}\\
    &+\frac{1}{2}\frac{f(q^{-1})g(q^{-1})}{\prod_{r=1}^{v_i}(1-qx_{i,r})}+\frac{1}{2}\frac{f(-q^{-1})g(-q^{-1})}{\prod_{r=1}^{v_i}(1+qx_{i,r})}+\cdots\in \mathcal{A}_q,
\end{align*}
where $\cdots$ is some element in $K^{G\times F\times \tilde{\mathbb{C}}^\times_q}(\pt)$. This finishes the proof.
\end{proof}

\subsection{GKLO operators}
We need to enlarge the coefficient ring. 
For $i\in Q_0^+$ let
\[
 \widetilde{\GL(V_i)}=\{(g,y):\det g=y^2\},\qquad
 \widetilde G=\prod_{i\in Q_0^+}\widetilde{\GL(V_i)}
 \times\prod_{i\in Q_O}\SO(V_i)\times\prod_{i\in Q_S}\Sp(V_i).
\]
We use $\widetilde{\calD_q(T)}$ to denote the algebra obtained from $\calD_q(T)$ by adjoining
$\{x_{i,r}^{\pm 1/2}=x_{\tau i,r}^{\mp 1/2}\mid i\notin Q_0^\tau, 1\leq r\leq v_i\}$, which obey the rule
$D_{i,r}x_{i,r}^{1/2}=qx_{i,r}^{1/2}D_{i,r}$, and put
\[\widetilde{\calD_q(T)}_{\loc}=\widetilde{\calD_q(T)}\otimes_{\mathbb C[q^{\pm1}]}\mathbb C(q).\]
Define $\widetilde{\mathcal A_q}(G\times F, \bfN)$ to be the subalgebra in 
$K^F(\pt)\otimes_{\mathbb C}\widetilde{\calD_q(T)}$
generated by
$z^*\circ(\iota_*)^{-1}(\mathcal A_q(G\times F, \bfN))$ and $K^{\widetilde G\times F}(\pt)$.

For $i\in Q_0^+$, set
\[
M_i=\prod_{r=1}^{v_i}x_{i,r}^{1/2}
\in K^{\widetilde G\times F}(\pt).
\]
For the remaining vertices, define
\[
M_{\tau i}=M_i^{-1}\quad(i\in Q_0^+),
\qquad
M_i=1\quad(i\in Q_0^\tau).
\]
Thus $M_{\tau i}=M_i^{-1}$ for every $i\in Q_0$.
Define
\begin{align*}
 V_i(z)&=M_i^{-1}\prod_{r=1}^{v_i}(1-x_{i,r}/z),&
 \hat V_i(z)&=M_i\prod_{r=1}^{v_i}(1-z/x_{i,r}),\\
 V_{i,r}(z)&=M_i^{-1}\prod_{s\ne r}(1-x_{i,s}/z),&
 \hat V_{i,r}(z)&=M_i\prod_{s\ne r}(1-z/x_{i,s}).
\end{align*}
Also set
\begin{equation*}
 W_i(z)=\prod_{b=1}^{m_i}(1-qz/w_{i,b}),\qquad
 A_i(z)=W_i(z)\prod_{h\in Q_1,s(h)=i}\hat V_{t(h)}(qz).
\end{equation*}
Put $e_i=\mathbf1_{\{i\to\tau i\}}$, which is equal to $1$ if there is an edge $h$ going from $i$ to $\tau i$ and $0$ otherwise;
it is zero at a fixed vertex.

\begin{definition}
At a nonfixed vertex let
\begin{equation*}
 \hat B_i(u)=\sum_{r=1}^{v_i}\delta(qx_{i,r}u)
 \frac{A_i(x_{i,r})}{(1-qx_{i,r}^2)^{e_i}V_{i,r}(x_{i,r})}D_{i,r}.
\end{equation*}
At an orthogonal fixed vertex let
\begin{equation}\label{eq:Borthogonal}
 \hat B_i(u)=\sum_{r\in I_i}\delta(qx_{i,r}u)
 \frac{(1-x_{i,r}^{-2})A_i(x_{i,r})}{V_{i,r}(x_{i,r})}D_{i,r}.
\end{equation}
At a symplectic fixed vertex let
\begin{equation}\label{eq:Bsymplectic}
\begin{split}
 \hat B_i(u)={}&\sum_{r=1}^{v_i}\delta(qx_{i,r}u)
 \frac{A_i(x_{i,r})}{(1-q^{-2}x_{i,r}^{-2})V_{i,r}(x_{i,r})}D_{i,r}\\
 &+\sum_{\epsilon=\pm1}\frac{\delta(\epsilon u)A_i(\epsilon q^{-1})}
 {2V_i(\epsilon q^{-1})}.
\end{split}
\end{equation}
For every vertex put
\begin{equation*}
 \hat\Theta_i(u)=(q-q^{-1})^2p_i(u)
 \frac{A_i(q^{-1}u^{-1})A_{\tau i}(q^{-1}u)}
 {V_i(q^{-1}u^{-1})V_{\tau i}(q^{-1}u)},
\end{equation*}
where
\begin{equation*}
 p_i(u)=\begin{cases}
 (qu)^{e_i}(-qu^{-1})^{e_{\tau i}}(u-u^{-1})^{c_{i,\tau i}},
 &i\notin Q_0^\tau,\\
 -q(1-q^2u^2)(1-q^2u^{-2}),&i\in Q_O,\\
 q/(u-u^{-1})^2,&i\in Q_S.
 \end{cases}
\end{equation*}
Here we expand the rational function $\hat \Theta_i(u)$ at $u=\infty$.
\end{definition}

Define 
\begin{equation}\label{eq:muuniform}
\ell_i:=m_i-v_i+\sum_{h:s(h)=i}v_{t(h)}+d_i,
 \qquad
 d_i:=\begin{cases}-2e_i,&i\notin Q_0^\tau,\\
 2,&i\in Q_O,\\-2,&i\in Q_S.\end{cases}
\end{equation}
Direct computations give the following lemma.
\begin{lemma}\label{lem:Thetauniform}
As rational functions,
$\hat\Theta_i(u)=\hat\Theta_{\tau i}(u^{-1})$. At infinity,
\[
 \hat\Theta_i(u)=k_{\tau i}u^{\ell_{\tau i}}
 +\text{terms of smaller degree in }u,
\]
where $k_{\tau i}$ is a unit in
$K^{\widetilde G\times F}(\pt)\otimes_{\mathbb{C}}\mathbb C(q)$.
For a nonfixed vertex it is given by
\begin{equation*}
 k_{\tau i}=q^{e_i}(-q)^{e_{\tau i}}(q-q^{-1})^2
 \frac{\prod_{b=1}^{m_{\tau i}}(-w_{\tau i,b})^{-1}}
 {\prod_{r=1}^{v_i}(-qx_{i,r})}
 \frac{\prod_{h:s(h)=i}M_{t(h)}}{\prod_{h:s(h)=\tau i}M_{t(h)}}
 \prod_{h:s(h)=\tau i}(-1)^{v_{t(h)}}.
\end{equation*}
At a fixed vertex it is
\begin{equation*}
 k_i=
 (q-q^{-1})^2(-q)^{-v_i}
 \prod_{b=1}^{m_i}(-w_{i,b})^{-1}
 \prod_{h:s(h)=i}(-1)^{v_{t(h)}}\cdot\begin{cases}q^3,&i\in Q_O,\\q,&i\in Q_S\end{cases}.
\end{equation*}
All coefficients of $\hat\Theta_i(u)$ at infinity belong to the
same equivariant coefficient ring.
\end{lemma}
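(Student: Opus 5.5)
The plan is to work directly from the definition of $\hat\Theta_i(u)$ as a product of $p_i(u)$, $(q-q^{-1})^2$, and the four rational factors $A_i(q^{-1}u^{-1})$, $A_{\tau i}(q^{-1}u)$, $V_i(q^{-1}u^{-1})^{-1}$, $V_{\tau i}(q^{-1}u)^{-1}$. The proof splits into three steps: the symmetry $\hat\Theta_i(u)=\hat\Theta_{\tau i}(u^{-1})$, the computation of the leading term at $u=\infty$, and the statement about coefficients.

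\emph{Symmetry.} Replacing $u$ by $u^{-1}$ and $i$ by $\tau i$ visibly swaps the two $A$-factors and the two $V$-factors, so the identity reduces to $p_i(u)=p_{\tau i}(u^{-1})$.
\begin{itemize}
\item At a fixed vertex, $p_i$ is manifestly invariant under $u\mapsto u^{-1}$.
\item At a nonfixed vertex, the exponents $e_i$ and $e_{\tau i}$ get exchanged. One should check that $e_i=e_{\tau i}$: an arrow $i\to\tau i$ is mapped by $\tau$ to an arrow $i\to\tau i$, and simply-lacedness makes it fixed. Then $(qu)^{e}(-qu^{-1})^{e}=(-q^2)^e$ is invariant.
\item The factor $(u-u^{-1})^{c_{i,\tau i}}$ changes by the sign $(-1)^{c_{i,\tau i}}$. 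Here $c_{i,\tau i}$ is the Cartan entry, so it equals $-e_i$ or $0$, and the resulting sign has to be tracked.
\end{itemize}
If this sign does not cancel directly, I would absorb it by comparing with the conventions for $M_{\tau i}=M_i^{-1}$ in the $V$-factors. Indeed $V_{\tau i}(z)$ with $x_{\tau i,r}=x_{i,r}^{-1}$ can be rewritten via $\hat V_i$ up to monomials, and these must match.

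\emph{Leading term.} At $u\to\infty$:
\begin{itemize}
\item $V_i(q^{-1}u^{-1})=M_i^{-1}\prod_r(1-qx_{i,r}u)\sim M_i^{-1}\prod_r(-qx_{i,r})\,u^{v_i}$.
\item $V_{\tau i}(q^{-1}u)\to M_{\tau i}^{-1}$.
\item $W_i(q^{-1}u^{-1})\to1$.
\item $W_{\tau i}(q^{-1}u)\sim\prod_b(-w_{\tau i,b})^{-1}u^{m_{\tau i}}$.
\item $\hat V_{t(h)}(u^{-1})\to M_{t(h)}$.
\item $\hat V_{t(h)}(u)\sim M_{t(h)}\prod_a(-x_{t(h),a}^{-1})\,u^{v_{t(h)}}$.
\end{itemize}
Collecting degrees gives $m_{\tau i}-v_i+\sum_{s(h)=\tau i}v_{t(h)}$ plus the degree of $p_i$. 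That degree is $e_i\cdot0+c_{i,\tau i}$ in the nonfixed case, i.e.\ $-2e_i$ once the $e$-powers cancel. It is $+2$ for $Q_O$ (the numerator is quartic against $u^2$ normalization) and $-2$ for $Q_S$. Using $v_i=v_{\tau i}$, this matches $\ell_{\tau i}$.

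The products $\prod_a x_{t(h),a}^{-1}$ over $h$ with $s(h)=\tau i$ combine with $M_{t(h)}$: since $\prod_a x_{j,a}^{-1}=M_j^{-2}$ for nonfixed $j$, the quantity $M_{t(h)}\prod_a x_{t(h),a}^{-1}$ becomes $M_{t(h)}^{-1}$. This gives the ratio $\prod_{s(h)=i}M_{t(h)}\big/\prod_{s(h)=\tau i}M_{t(h)}$ after reindexing by $\tau$, using $M_{\tau j}=M_j^{-1}$ and the bijection $h\mapsto\tau h$ from arrows out of $i$ to arrows into $\tau i$. For fixed targets $t(h)$, the product of all $x_{t(h),a}$ equals $1$, and $M=1$.

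At a fixed vertex, $\tau i=i$, and the same bookkeeping, together with the constants $-q\cdot q^4\cdot$ (leading coefficient of $(1-q^2u^2)(1-q^2u^{-2})$ is $-q^2$) and $q$ respectively, gives the stated $q^3$ and $q$ factors. $M_i=1$, $\prod_r x_{i,r}=1$, and the remaining product is $\prod_r(-qx_{i,r})^{-1}=(-q)^{-v_i}$.

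\emph{Coefficients and units.} The leading coefficient $k_{\tau i}$ is a product of powers of $q$, $q-q^{-1}$, signs, and monomials in $w$, $x$, $M$. Hence it is a unit in $K^{\widetilde G\times F}(\pt)\otimes\mathbb C(q)$, with the $(q-q^{-1})^2$ factor inverted in $\mathbb C(q)$.

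For the claim on all coefficients, the expansion of each factor at $u=\infty$ has coefficients given by elementary symmetric functions of the $x_{i,r}^{\pm1}$ and $w$'s, which are Weyl-invariant. The only non-polynomial operations are the geometric expansions of $1/V_i(q^{-1}u^{-1})$ in $u^{-1}$ and of $1/(u-u^{-1})^2$ at the symplectic vertex. These introduce only inverses of the leading unit and of $q$, so all coefficients lie in the same ring.

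The main obstacle is the sign and monomial bookkeeping in the nonfixed case: matching $(-1)^{v_{t(h)}}$, the $M$-ratio, and the sign from $(u-u^{-1})^{c_{i,\tau i}}$ against the stated formula, particularly when an arrow $i\to\tau i$ exists. I would first verify that case separately on the rank-one example $Q_0=\{i,\tau i\}$ with a single fixed arrow.
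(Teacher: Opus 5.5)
Your overall strategy (direct computation of the symmetry, the leading term and the coefficients) is the paper's "direct computation". Several parts check out: the asymptotics you list for the $A$-, $V$- and $W$-factors, the identity $M_{t(h)}\prod_a x_{t(h),a}^{-1}=M_{t(h)}^{-1}$, the fixed-vertex constants $q^3$ and $q$, and the unit and coefficient-ring claims.

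\textbf{The gap is in the nonfixed factor $p_i(u)$: your claim $e_i=e_{\tau i}$ is false.} You correctly observe that $\tau$ sends an arrow $h\colon i\to\tau i$ to an arrow with the same source and target, so $h$ is fixed. But this produces no arrow $\tau i\to i$. Since the graph is simply-laced, there is at most one edge between $i$ and $\tau i$. Hence $e_ie_{\tau i}=0$ and $e_i+e_{\tau i}=-c_{i,\tau i}$, so when $c_{i,\tau i}=-1$ exactly one of $e_i,e_{\tau i}$ equals $1$. Under your premise one would get $p_{\tau i}(u^{-1})=-p_i(u)$ in the adjacent case. Your proposed rescue via the convention $M_{\tau i}=M_i^{-1}$ cannot work. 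As you noted yourself, the $A$- and $V$-factors of $\hat\Theta_i(u)$ and $\hat\Theta_{\tau i}(u^{-1})$ agree identically. So the symmetry is literally the identity $p_i(u)=p_{\tau i}(u^{-1})$, and a leftover sign would make the lemma false.

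With the correct combinatorics the check is immediate. If $e_i=1$, then $p_i(u)=qu/(u-u^{-1})=(-qu)/(u^{-1}-u)=p_{\tau i}(u^{-1})$. If $e_{\tau i}=1$, then $p_i(u)=-qu^{-1}/(u-u^{-1})=qu^{-1}/(u^{-1}-u)=p_{\tau i}(u^{-1})$. The minus sign in $(-qu^{-1})^{e_{\tau i}}$ is exactly what absorbs $(-1)^{c_{i,\tau i}}$.

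The same error corrupts your degree count and leading coefficient:
\begin{itemize}
\item Degree: $\deg_\infty p_i=e_i-e_{\tau i}+c_{i,\tau i}=-2e_{\tau i}=d_{\tau i}$, which is what $\ell_{\tau i}$ requires. It is not $-2e_i$; your own premise would even give $-e_i$, so that step is internally inconsistent.
\item Leading coefficient: for $p_i$ it is $q^{e_i}(-q)^{e_{\tau i}}$, not $(-q^2)^{e}$. This is precisely the prefactor in the stated $k_{\tau i}$.
\end{itemize}

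Finally, no reindexing by $\tau$ is needed for the $M$-ratio. We have $A_i(q^{-1}u^{-1})\to\prod_{s(h)=i}M_{t(h)}$. The leading coefficient of $A_{\tau i}(q^{-1}u)$ is $\prod_b(-w_{\tau i,b})^{-1}\prod_{s(h)=\tau i}(-1)^{v_{t(h)}}M_{t(h)}^{-1}$. The two $V$-factors contribute $M_i^{-1}M_{\tau i}^{-1}=1$ besides $\prod_r(-qx_{i,r})$. This yields the stated ratio directly.
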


Define $\hat B_{i,s}$ and $\hat\Theta_{i,r}$ by
\[
 \hat B_i(u)=\sum_s\hat B_{i,s}u^{-s},\qquad
 \hat\Theta_i(u)=k_{\tau i}\left(u^{\ell_{\tau i}}+
 (q-q^{-1})\sum_{r>-\ell_{\tau i}}\hat\Theta_{i,r}u^{-r}\right).
\]

\begin{prop}\label{prop:ThetaBS}
The elements $(q-q^{-1})\hat\Theta_{i,r}$ and $k_i$ belong to
$\widetilde{\mathcal A_q}$. Moreover,
\begin{equation*}
 \hat B_{i,s}=\begin{cases}
 q^{-s}M_i\prod_{h:s(h)=i}M_{t(h)}\cdot z^*\circ(\iota_*)^{-1}\bigl(\mathcal Q_i^{-s}\cdot [\calO_{\calR_{\epsilon_{i,1}}}]\bigr),
 &i\in Q_0^+,\\
 q^sM_i\prod_{h:s(h)=i}M_{t(h)}\cdot z^*\circ(\iota_*)^{-1}\bigl(\mathcal S_{\tau i}^{s}\cdot [\calO_{\calR_{-\epsilon_{\tau i,v_i}}}]\bigr),
 &i\in\tau(Q_0^+),\\
 q^{-s}\prod_{h:s(h)=i}M_{t(h)}\cdot z^*\circ(\iota_*)^{-1}\bigl(\mathcal L_i^{-s}\cdot [\calO_{\calR_{\epsilon_{i,1}}}]\bigr),
 &i\in Q_O,\\
 q^{-s}\prod_{h:s(h)=i}M_{t(h)}\cdot z^*\circ (\iota_*)^{-1}\bigg(\tilde{p}_*\big(\mathcal L_i^{-s}\cdot [\calO_{\tilde{\calR}_{G,\bfN,\leq \epsilon_{i,1}}}]\big)\bigg)+\cdots,&i\in Q_S,
 \end{cases}
\end{equation*}
where $\cdots$ denotes some element in $K^{\widetilde{G}\times F\times \widetilde{\mathbb{C}}_q^\times}(\pt)$.
\end{prop}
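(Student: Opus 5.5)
The plan is to prove the two assertions separately: first the coefficients of $\hat\Theta_i(u)$ and $k_i$, then the identification of each $\hat B_{i,s}$ by matching $\delta$-function expansions against the monopole formulas already established.

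\emph{The elements $(q-q^{-1})\hat\Theta_{i,r}$ and $k_i$.} By Lemma~\ref{lem:Thetauniform}, $k_{\tau i}$ is a unit built from $q$, the flavor characters $w_{i,b}$, the characters $x_{i,r}$ and the half-determinants $M_j$. All of these lie in $K^{\widetilde G\times F}(\pt)\otimes\mathbb C(q)$, which $\widetilde{\mathcal A_q}$ contains by definition. The expansion of $\hat\Theta_i(u)$ at $u=\infty$ is a ratio of $W$-symmetric Laurent polynomials in the $x_{i,r}$, symmetrized appropriately under the Weyl group of $\SO$ or $\Sp$ at fixed vertices. Expanding in $u^{-1}$ therefore gives coefficients that are symmetric power series truncated at each degree. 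More concretely, $1/V_i(q^{-1}u^{-1})$ expands through complete homogeneous symmetric functions, which are polynomial in the $x$'s. Hence every coefficient lies in $K^{\widetilde G\times F}(\pt)[q^{\pm1}]$, and $(q-q^{-1})\hat\Theta_{i,r}=k_{\tau i}^{-1}[u^{-r}]\hat\Theta_i$ lies in $\widetilde{\mathcal A_q}$. The one point to check is that the factors $p_i(u)$ at symplectic vertices, which contain $(u-u^{-1})^{-2}$, still give polynomial coefficients after expansion at $\infty$. This holds because $(u-u^{-1})^{-2}=u^{-2}\sum_{n\ge0}(n+1)u^{-2n}$.

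\emph{The $\hat B_{i,s}$ at nonfixed vertices.} Write $\delta(qx_{i,r}u)=\sum_s (qx_{i,r})^{s}u^{s}$, so the coefficient of $u^{-s}$ is $(qx_{i,r})^{-s}$. For $i\in Q_0^+$, the plan is to match the summand of $\hat B_i(u)$ with Proposition~\ref{prop:fmono} for $f(x)=x^{-s}$, factor by factor:
\begin{itemize}
\item $W_i(x_{i,r})$ gives the framing factor.
\item For $s(h)=i$ and $h$ not fixed, $\hat V_{t(h)}(qx_{i,r})=M_{t(h)}\prod_a(1-qx_{i,r}/x_{t(h),a})$.
\item For the fixed arrow $h\colon i\to\tau i$, $\hat V_{\tau i}(qx_{i,r})$ contains the extra factor $a=r$, namely $(1-qx_{i,r}x_{i,r})$. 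This is cancelled by $(1-qx_{i,r}^2)^{e_i}$.
\item The remaining denominator $V_{i,r}(x_{i,r})=M_i^{-1}\prod_{s\ne r}(1-x_{i,s}/x_{i,r})$.
\end{itemize}
Collecting, the prefactor is $q^{-s}M_i\prod_{h:s(h)=i}M_{t(h)}$, as claimed. For $i\in\tau(Q_0^+)$, the same comparison with Proposition~\ref{prop:fmono2} uses $x_{i,r}=x_{\tau i,r}^{-1}$ and $D_{i,r}=D_{\tau i,r}^{-1}$. The shift $q^{-2}$ in $f(q^{-2}x)$ converts $q^{-s}x_{i,r}^{-s}$ into $q^{s}(q^{-2}x_{\tau i,r})^{s}$. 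This is the bookkeeping step where sign and $q$-power errors are most likely, so I would verify it on $v_i=1$ first.

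\emph{Fixed vertices.} For $i\in Q_O$, I will compare with Proposition~\ref{prop:orthmon}. Here $M_i=1$, and the factor $(1-x_{i,r}^{-2})$ in \eqref{eq:Borthogonal} cancels the $s=r'$ factor in $V_{i,r}(x_{i,r})$. That factor is exactly the one excluded in the tangent Euler class.

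For $i\in Q_S$, I will use Lemma~\ref{lem:sympmono} with $f(x)=x^{-s}$ and $g=A_i/\prod_h M_{t(h)}$. The $D$-part matches in the same way as the orthogonal case; here $V_{i,r}(x_{i,r})$ runs over all $s\ne r$, including $r'$, in agreement with Proposition~\ref{prop:sympmon}. The constant term of $\hat B_i(u)$ at $u^{-s}$ is $\sum_\epsilon \epsilon^{s}A_i(\epsilon q^{-1})/(2V_i(\epsilon q^{-1}))$. Since $V_i(\epsilon q^{-1})=\prod_r(1-\epsilon qx_{i,r})$, this equals $\tfrac12 f(\epsilon q^{-1})g(\epsilon q^{-1})/\prod_r(1-\epsilon q x_{i,r})$ multiplied by $q^{-s}\prod M_{t(h)}$ after rescaling. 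These are exactly the two explicit terms of Lemma~\ref{lem:sympmono}.

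The main obstacle I expect is matching this normalization exactly, in particular reconciling $\epsilon^{s}$ and the powers of $q$ between $f(\epsilon q^{-1})=(\epsilon q)^{s}$ and the prefactor $q^{-s}$. I would test this on $v_i=2$ before concluding. Any remaining discrepancy lies in $K^{\widetilde G\times F\times\widetilde{\mathbb C}_q^\times}(\pt)$, which the statement absorbs into $\cdots$.
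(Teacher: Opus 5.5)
Your proposal is correct and follows the paper's own argument: the paper derives the first claim from Lemma~\ref{lem:Thetauniform}, and the formulas for $\hat B_{i,s}$ by coefficientwise comparison with Propositions~\ref{prop:fmono}, \ref{prop:fmono2}, \ref{prop:orthmon} and Lemma~\ref{lem:sympmono}; your normalizations are right, including $q^{-s}(\epsilon q)^{s}=\epsilon^{s}$ for the two scalar terms. Two small fixes: $\widetilde{\mathcal A_q}$ does not contain $\mathbb C(q)$, so you should instead observe that the factor $(q-q^{-1})^2$ cancels between $\hat\Theta_i(u)$ and $k_{\tau i}$; and you should drop the fallback that a leftover discrepancy is absorbed into $\cdots$, since the scalar terms carry the denominators $V_i(\epsilon q^{-1})$ and therefore must match exactly (which they do).
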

\begin{proof}
    The first statement follows from Lemma \ref{lem:Thetauniform}. The rest follows from Proposition \ref{prop:fmono}, \ref{prop:fmono2}, \ref{prop:orthmon}, and Lemma \ref{lem:sympmono}.
\end{proof}

\section{GKLO homomorphisms and Coulomb branch realizations}\label{sec:gklo}
In this section, we construct the GKLO-type homomorphism for the shifted affine $\mathrm{i}$quantum groups and prove that it factors through the corresponding quantized Coulomb branches. 

\subsection{Formal distributions and expansion conventions}\label{sec:formal-distributions}

Let $K$ be the field of rational functions in the coordinate variables and parameters. The spectral variables $u,v,\ldots$ are independent formal variables over $K$. A formal distribution is a sum of integral powers of the
spectral variables with coefficients in $K$.

Set $\delta(u)=\sum_{m\in\mathbb Z}u^m$. For $a,b\in K^\times$, $f\in K[u^{\pm1}]$, and $G\in K[u^{\pm1},v^{\pm1}]$, we have
\begin{equation}\label{eq:delta-laurent}
    \begin{aligned}
         f(u)\delta(a/u)&=f(a)\delta(a/u),\\
         G(u,v)\delta(a/u)\delta(b/v)
         &=G(a,b)\delta(a/u)\delta(b/v).
    \end{aligned}
\end{equation}
For any formal distribution $S$ in one variable, we also have
\[
    \delta(uv)S(v)=\delta(uv)S(u^{-1}).
\]

For $F(u)\in K(u)$, write
\[
    F^+(u)=\iota_\infty^u F(u)\in K((u^{-1})),
    \qquad
    F^-(u)=\iota_0^u F(u)\in K((u)),
\]
as its expansions at infinity and zero in $u$, respectively.

The following lemma explains the coefficient-wise delta substitutions used below.

\begin{lemma}\label{lem:coefficientwise-delta}
    Suppose
    \[
        A(z,w)=\sum_{k\ge k_0}p_k(w)z^{-k},\qquad S(z)=\sum_{r\ge r_0}s_rz^{-r},
    \]
    where $k_0,r_0\in\mathbb Z$, $p_k(w)\in K[w^{\pm1}]$, and $s_r\in K$. For $a\in K^\times$, the products below are well defined and
    \[
        A(z,w)S(z)\delta(a/w) = \left( \sum_{k\ge k_0}p_k(a)z^{-k} \right) S(z)\delta(a/w).
    \]
    The identity remains valid after multiplication by $\delta(zv)$, where $v$ is an additional independent spectral variable. The analogous statements hold for expansions at $z=0$.
\end{lemma}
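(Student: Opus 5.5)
We must prove Lemma~\ref{lem:coefficientwise-delta}, and the plan is a direct coefficient comparison. First we check that the products make sense. Since $A(z,w)$ and $S(z)$ both lie in $K[w^{\pm1}]((z^{-1}))$, their product $A(z,w)S(z)=\sum_{n}\bigl(\sum_{k+r=n}p_k(w)s_r\bigr)z^{-n}$ has coefficients given by finite sums: for fixed $n$ we need $k\ge k_0$, $r\ge r_0$ and $k+r=n$, which leaves finitely many pairs. Thus each $z^{-n}$-coefficient $c_n(w)$ is a Laurent polynomial in $w$. Next we multiply by $\delta(a/w)=\sum_m a^m w^{-m}$. For fixed $n$, the coefficient of $z^{-n}w^{j}$ in the product is a finite sum, because $c_n(w)$ has finitely many terms. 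Hence $A S\,\delta(a/w)$ is a well-defined formal distribution. The same argument applies to the right-hand side, whose coefficients $p_k(a)\in K$ are constants in $w$.

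The core step is to apply \eqref{eq:delta-laurent} coefficientwise in $z$. Writing
\[
A(z,w)S(z)\delta(a/w)=\sum_n c_n(w)\delta(a/w)z^{-n},
\]
each $c_n(w)$ is a Laurent polynomial, so $c_n(w)\delta(a/w)=c_n(a)\delta(a/w)$. Since $c_n(a)=\sum_{k+r=n}p_k(a)s_r$, this is exactly the $z^{-n}$-coefficient of $\bigl(\sum_k p_k(a)z^{-k}\bigr)S(z)$. Reassembling over $n$ gives the identity. The only subtle point is justifying that we may regroup the triple product $A\cdot S\cdot\delta$ by first summing in $z$ and then multiplying by $\delta$. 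This is legitimate because, at each fixed monomial $z^{-n}w^j$, only finitely many terms contribute, so associativity holds coefficientwise.

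For the extension with $\delta(zv)$, we again check finiteness. The coefficient of $z^{p}v^{m}w^{j}$ in $\bigl(\sum_n c_n(w)\delta(a/w)z^{-n}\bigr)\delta(zv)$ requires $-n+m=p$, so $n$ is determined by $m$ and $p$. The sum is therefore finite and the product is well defined. Multiplying both sides of the already-established identity by $\delta(zv)$ then preserves equality termwise. For expansions at $z=0$, the same argument runs with $z^{-1}$ replaced by $z$, using $K[w^{\pm1}]((z))$.

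The main (mild) obstacle is bookkeeping. We must keep track of which infinite sums are genuinely finite at each coefficient, so that the substitution $w\mapsto a$, which is valid only for Laurent polynomials, is applied one $z$-coefficient at a time rather than to the whole series in $w$.
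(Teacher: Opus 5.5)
Your proposal is correct and is essentially the paper's argument: the paper likewise compares the coefficient of $z^{-N}w^{-M}$ on both sides and obtains the finite sum $a^M\sum_{k=k_0}^{N-r_0}p_k(a)s_{N-k}$, which is exactly your $a^M c_N(a)$ from applying \eqref{eq:delta-laurent} to each Laurent-polynomial coefficient $c_N(w)$. The paper handles the $\delta(zv)$ extension and the expansions at $z=0$ the same way you do: fixing the power of $v$ fixes the summation index, and the case at $z=0$ follows by replacing $z$ with $z^{-1}$.
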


\begin{proof}
    By \eqref{eq:delta-laurent}, the coefficient of $z^{-N}w^{-M}$ on either side is
    \[
        a^M\sum_{k=k_0}^{N-r_0}p_k(a)s_{N-k},
    \]
    with an empty sum interpreted as zero. This is a finite sum. After inserting $\delta(zv)$, fixing the power of $v$ fixes its summation index. The statements at zero follow by replacing $z$ with $z^{-1}$.
\end{proof}

The same finiteness argument shows that $A(z,w)S(z)B(w)$ is well defined for an arbitrary bilateral current $B(w)$. For the rational coefficients in the following $\Theta$--$B$ and $\mathrm{i}$-Serre relations, the prescribed expansions have Laurent polynomial coefficients in the remaining $B$-variable. Lemma \ref{lem:coefficientwise-delta} therefore applies to each delta summand of the represented $B$-current.

In difference-operator expressions, all shift operators are placed on the right before these identities are applied. Thus the delta parameters used for substitution are those in the normal-ordered expression.

For $\mathbf{a}=(a_1,\ldots,a_n)\in(K^\times)^n$, define
\begin{equation*}
    \Delta_{\mathbf{a}}(\mathbf{u})=\prod_{i=1}^n\delta\left( \frac{u_i}{a_i} \right) .
\end{equation*}
Then for any $f\in K[u_1^{\pm 1},\ldots,u_n^{\pm 1}]$, we have $f(\mathbf{u})\Delta_{\mathbf{a}}(\mathbf{u})=f(\mathbf{a})\Delta_{\mathbf{a}}(\mathbf{u})$.

We will use the following cancellation criterion for finite delta sums.
\begin{lemma}\label{lem:delta-cancellation}
    Suppose
    \begin{equation*}
        T(\mathbf{u}) = \sum_{k=1}^N\Delta_{\mathbf{a}_k}(\mathbf{u})A_k,
    \end{equation*}
    where each $A_k$ is a difference operator with all shifts placed on the right. If $P\in K[u_1^{\pm 1},\ldots,u_n^{\pm 1}]$ satisfies $P(\mathbf{a}_k)\neq 0$ for all $1\leq k\leq N$, then
    \begin{equation*}
        P\cdot T = 0 \implies T = 0.
    \end{equation*}
\end{lemma}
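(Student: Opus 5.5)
The plan is to reduce the operator identity $P\cdot T=0$ to a family of scalar identities, one for each monomial in the spectral variables, and then use a finite linear system whose matrix is invertible. First I normal-order everything: each $A_k$ is written as $\sum_{\lambda}c_{k,\lambda}D^\lambda$ with $c_{k,\lambda}\in K$ and all shift monomials $D^\lambda$ on the right. Since the $D^\lambda$ are linearly independent over $K$ as left coefficients in $\calD_q(T)_{\loc}$, and multiplication by $P$ acts on the left coefficients only, it suffices to fix one shift $\lambda$. This reduces the statement to scalar formal distributions
\[
 T_\lambda(\mathbf u)=\sum_{k=1}^N c_{k,\lambda}\Delta_{\mathbf a_k}(\mathbf u),\qquad c_{k,\lambda}\in K .
\]
After this step I may assume every $A_k=c_k\in K$.

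Next I group the terms by distinct points. If $\mathbf a_k=\mathbf a_l$, I merge the two coefficients. I then assume the points $\mathbf a_1,\dots,\mathbf a_M$ are pairwise distinct, with coefficients $c_1,\dots,c_M$. The first identity in \eqref{eq:delta-laurent}, applied variable by variable, gives
\[
P(\mathbf u)T(\mathbf u)=\sum_k P(\mathbf a_k)c_k\Delta_{\mathbf a_k}(\mathbf u).
\]
So the hypothesis $PT=0$ becomes
\[
\sum_k P(\mathbf a_k)c_k\Delta_{\mathbf a_k}=0 .
\]

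The main point is then to show that the distributions $\Delta_{\mathbf a_1},\dots,\Delta_{\mathbf a_M}$ at distinct points are linearly independent over $K$. The coefficient of $\mathbf u^{\mathbf m}$ in $\Delta_{\mathbf a}$ is $\mathbf a^{-\mathbf m}$. A relation $\sum_k d_k\Delta_{\mathbf a_k}=0$ therefore means
\[
\sum_k d_k\,\mathbf a_k^{-\mathbf m}=0\quad\text{for all }\mathbf m\in\mathbb Z^n .
\]
Since the $\mathbf a_k$ are distinct, I can choose an integer vector $\mathbf e$ such that the scalars $t_k=\mathbf a_k^{-\mathbf e}$ are pairwise distinct. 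Such an $\mathbf e$ exists because each condition $t_k=t_l$ is a proper "subgroup" condition, and finitely many of these cannot cover $\mathbb Z^n$. I then take $\mathbf m=j\mathbf e$ for $j=0,\dots,M-1$. This gives a Vandermonde system in the $t_k$ with nonzero determinant, which forces all $d_k=0$.

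Finally I apply the independence with $d_k=P(\mathbf a_k)c_k$. This gives $P(\mathbf a_k)c_k=0$ for every $k$, and since $P(\mathbf a_k)\neq0$ by hypothesis, every $c_k=0$. Reassembling over the shifts $\lambda$ yields $T=0$. I expect the only delicate step to be the linear independence of the delta distributions, in particular the choice of $\mathbf e$ that separates the points. The normal-ordering reduction is routine, since $P$ carries no shift operators.
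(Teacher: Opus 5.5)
Your reduction to scalar coefficients and the identity $P\cdot T=\sum_k P(\mathbf a_k)c_k\Delta_{\mathbf a_k}$ are fine. The step you flagged as delicate, however, is false as stated: an $\mathbf e\in\mathbb Z^n$ with the values $\mathbf a_k^{-\mathbf e}$ pairwise distinct need not exist. Each set $H_{kl}=\{\mathbf e:(\mathbf a_k/\mathbf a_l)^{\mathbf e}=1\}$ is indeed a proper subgroup. But for $n\ge 2$, finitely many proper subgroups \emph{can} cover $\mathbb Z^n$ once some coordinate ratios are roots of unity. For example, $2\mathbb Z\times\mathbb Z$, $\mathbb Z\times 2\mathbb Z$ and $\{\mathbf e: e_1+e_2\in 2\mathbb Z\}$ cover $\mathbb Z^2$. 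Concretely, for the four points $(\pm1,\pm1)$, every value $\mathbf a_k^{-\mathbf e}$ lies in $\{1,-1\}$, so no one-direction Vandermonde system separates them.

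This case actually occurs in the paper. The scalar summands $\delta(\epsilon u)$, $\epsilon=\pm1$, of \eqref{eq:Bsymplectic} produce exactly the supports $(u_1,u_2)\in\{\pm1\}^2$ (with a common $v$-coordinate) in the Serre relation \eqref{rel:Serre2} at a symplectic vertex, where the lemma is applied.

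The conclusion you need, linear independence of $\Delta_{\mathbf a_1},\dots,\Delta_{\mathbf a_M}$ at distinct points, is still true, and there are two ways to repair the step. You can induct on $n$: group the points by their last coordinate and apply the one-variable Vandermonde argument in $m_n$, where distinct $a$'s do give distinct $a^{-1}$. Then apply the induction hypothesis to the remaining coordinates, which are pairwise distinct within each group. Alternatively, cite Dedekind--Artin independence of the distinct characters $\mathbf m\mapsto\mathbf a_k^{-\mathbf m}$ of $\mathbb Z^n$.

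The paper avoids the issue entirely. Polynomial interpolation in $K[\mathbf u]$ at finitely many distinct points always works, for instance via products of $(u_i-a_{l,i})/(a_{k,i}-a_{l,i})$ along a coordinate $i$ where $\mathbf a_k$ and $\mathbf a_l$ differ. The paper uses this to pick $Q$ with $Q(\mathbf a_k)=1/P(\mathbf a_k)$, so that $Q\cdot P\cdot T=\sum_k\Delta_{\mathbf a_k}A_k=T$ and hence $T=0$. This route needs neither the independence of delta distributions nor your reduction to a single shift $D^\lambda$.
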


\begin{proof}
    By polynomial interpolation on the finite set of distinct support points, there exists a polynomial $Q\in K[\mathbf{u}]$ such that $Q(\mathbf{a}_k)=\frac{1}{P(\mathbf{a}_k)}$ for all $1\leq k\leq N$. Hence the product $Q\cdot P\cdot T$ is well-defined, and
    \begin{equation*}
        0=Q\cdot P\cdot T=\sum_{k=1}^NQ(\mathbf{a}_k)P(\mathbf{a}_k)\Delta_{\mathbf{a}_k}(\mathbf{u})A_k=\sum_{k=1}^N\Delta_{\mathbf{a}_k}(\mathbf{u})A_k=T.
    \end{equation*}
\end{proof}

\subsection{Shifted affine $\mathrm{i}$quantum groups}\label{sec:iquantum}
Let $\mathbb I=Q_0$, and let $C=(c_{ij})$ be the Cartan matrix of the
underlying graph: $c_{ii}=2$, $c_{ij}=-1$ for adjacent vertices and
$c_{ij}=0$ otherwise. The involution preserves $C$. 

Let $\Lambda_i$ be the fundamental coweights and choose an integral
coweight $\mu$, with $\ell_i=\langle\mu,\alpha_i\rangle$.
Put $[n]=(q^n-q^{-n})/(q-q^{-1})$.
Following \cite[Definition 2.4]{LWW26}, we give the following definition of the affine $\mathrm{i}$quantum group $\Ui_\mu$.
\begin{definition}\label{def:iquant}
The algebra $\Ui_\mu$ is generated over $\mathbb C(q)$ by
$\mathbb K_i^{\pm1}$, $B_{i,s}$ ($s\in\mathbb Z$), and
$\Theta_{i,r}$ ($r>-\ell_{\tau i}$). Write
\begin{equation*}
 \Theta_i(u)=\mathbb K_{\tau i}\left(u^{\ell_{\tau i}}+
 (q-q^{-1})\sum_{r>-\ell_{\tau i}}\Theta_{i,r}u^{-r}\right),
 \qquad B_i(u)=\sum_{s\in\mathbb Z}B_{i,s}u^{-s}.
\end{equation*}
The defining current relations are
\begin{align}
 &[\Theta_i(u),\Theta_j(v)]=0,\label{rel:HH}\\
 &\Theta_i(u)B_j(v)=
 \frac{(q^{c_{ij}}u-v)(q^{c_{\tau i,j}}u^{-1}-v)}
 {(u-q^{c_{ij}}v)(u^{-1}-q^{c_{\tau i,j}}v)}B_j(v)\Theta_i(u),\label{rel:HB}\\
 &[B_i(u),B_{\tau i}(v)]=\frac{\delta(uv)}{q-q^{-1}}
 (\Theta_i(u)-\Theta_{\tau i}(v)),\quad c_{i,\tau i}=0,\label{rel:BB1}\\
 &(u-q^2v)B_i(u)B_i(v)+(v-q^2u)B_i(v)B_i(u)\notag\\
 &\hspace{15mm}=\frac{\delta(uv)(u-v)}{q^{-1}-q}
 (\Theta_i(u)-\Theta_i(v)),\quad i=\tau i,\label{rel:BB2}\\
 &(u-q^{-1}v)B_i(u)B_{\tau i}(v)
 +(v-q^{-1}u)B_{\tau i}(v)B_i(u)\notag\\
 &\hspace{15mm}=\frac{\delta(uv)(u-v)}{q^2-1}
 (\Theta_i(u)-\Theta_{\tau i}(v)),\quad c_{i,\tau i}=-1,\label{rel:BB3}\\
 &[B_i(u),B_j(v)]=0,\quad c_{ij}=0,\ j\ne\tau i,\label{rel:BB4}\\
 &(u-q^{c_{ij}}v)B_i(u)B_j(v)
 +(v-q^{c_{ij}}u)B_j(v)B_i(u)=0,\quad j\ne\tau i.\label{rel:BB5}
\end{align}
Define
\[
 \begin{aligned}
 \mathbb S_{i,j}(u_1,u_2\mid v)=\Sym_{u_1,u_2}\bigl(&B_i(u_1)B_i(u_2)B_j(v)\\
 &-[2]B_i(u_1)B_j(v)B_i(u_2)+B_j(v)B_i(u_1)B_i(u_2)\bigr).
 \end{aligned}
\]
The Serre relations are
\begin{align}
 &\mathbb S_{i,j}(u_1,u_2\mid v)=0,
 \quad c_{ij}=-1,\ i\ne\tau i,\ j\ne\tau i,\label{rel:Serre1}\\
 &\mathbb S_{i,j}(u_1,u_2\mid v)=
 \frac{\delta(u_1u_2)(u_1-u_2)v}{(u_1-qv)(u_2-qv)}
 (\Theta_i(u_1)-\Theta_i(u_2))B_j(v),\quad c_{ij}=-1,\ i=\tau i,\label{rel:Serre2}\\
 &\mathbb S_{i,\tau i}(u_1,u_2\mid v)=\Sym_{u_1,u_2}
 \frac{\delta(u_2v)(1+q^2)(u_2-v)v}{(qu_1-v)(v-q^2u_1^{-1})}
 (\Theta_i(u_2)-\Theta_{\tau i}(v))B_i(u_1),\notag\\
 &\hspace{85mm}c_{i,\tau i}=-1.\label{rel:Serre3}
\end{align}
\end{definition}

\begin{remark}\label{rem:expansions}
We use the conventions of Section \ref{sec:formal-distributions}.
The rational coefficient in \eqref{rel:HB} is expanded at $u=\infty$. Put
\begin{align*}
    h_v^{(2)}(u)&:=\frac{(u-u^{-1})v}{(u-qv)(u^{-1}-qv)},\\
    h_{u_1}^{(3)}(u)&:=\frac{(1+q^2)(u-u^{-1})u^{-1}}{(qu_1-u^{-1})(u^{-1}-q^2u_1^{-1})}.
\end{align*}
The right-hand sides of \eqref{rel:Serre2} and
\eqref{rel:Serre3} mean, respectively,
\begin{align*}
    &\delta(u_1u_2)
    \bigl(h_v^{(2),+}(u_1)\Theta_i(u_1)-h_v^{(2),-}(u_1)\Theta_i(u_1^{-1})\bigr)B_j(v),\\
    &\operatorname{Sym}_{u_1,u_2}\delta(u_2v)
    \bigl(h_{u_1}^{(3),+}(u_2)\Theta_i(u_2)-h_{u_1}^{(3),-}(u_2)\Theta_{\tau i}(u_2^{-1})\bigr)B_i(u_1)
\end{align*}
In a representation where these two $\Theta$-series come from the same rational function $\hat{\Theta}_i(u)$, put
\[
    \mathcal H_{i,a}^{(r)}(u)
    :=h_a^{(r)}(u)\hat{\Theta}_i(u),
    \qquad r=2,3.
\]
The parenthesized terms above are the expansion differences of $\mathcal H_{i,v}^{(2)}(u_1)$ and $\mathcal H_{i,u_1}^{(3)}(u_2)$, respectively. Here $+$ and $-$ denote expansion of the complete rational product at infinity and zero in its argument. These interpretations of \eqref{rel:Serre2} and \eqref{rel:Serre3} are consistent with the calculations in \cite[Sections~4.5 and~5.2.2]{LWW26}, respectively.
\end{remark}

\subsection{The homomorphism and its geometric factorization}
Now we can state the main result of this paper. Let $\mu=\sum_i\ell_i\Lambda_i$, where $\ell_i$ is defined in \eqref{eq:muuniform}.
\begin{theorem}\label{thm:gklo}
For the signed-quiver data in Section \ref{sec:quiver},
the assignments
\[
 \mathbb K_i\longmapsto k_i,\qquad
 \Theta_i(u)\longmapsto\hat\Theta_i(u)^+,\qquad
 B_i(u)\longmapsto\hat B_i(u)
\]
define a unique $K^F(\pt)\otimes_{\mathbb{C}}\mathbb C(q)$-algebra homomorphism
\[
 \psi:\Ui_\mu\otimes_{\mathbb C}K^F(\pt)
 \longrightarrow K^F(\pt)\otimes_{\mathbb C}
 \widetilde{\calD_q(T)}_{\loc}.
\]
\end{theorem}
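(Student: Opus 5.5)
Since $\Ui_\mu$ is defined by generators and the current relations \eqref{rel:HH}--\eqref{rel:Serre3}, uniqueness is automatic once the images of $\mathbb K_i^{\pm1}$, $B_{i,s}$, $\Theta_{i,r}$ are fixed. For existence we must show that $k_i$, $\hat B_i(u)$ and $\hat\Theta_i(u)^+$ satisfy every relation inside $K^F(\pt)\otimes\widetilde{\calD_q(T)}_{\loc}$. The setup is as follows. By Lemma~\ref{lem:Thetauniform}, $\hat\Theta_i(u)^+$ has leading term $k_{\tau i}u^{\ell_{\tau i}}$ with $k_{\tau i}$ a unit, so the assignment on $\mathbb K_i^{\pm1}$ and $\Theta_{i,r}$ is well defined and matches the normalization of $\Theta_i(u)$ in Definition~\ref{def:iquant}. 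Moreover, every $\hat\Theta_i$ and $k_i$ is a multiplication operator (it involves no $D$'s), so \eqref{rel:HH} is immediate. The symmetry $\hat\Theta_i(u)=\hat\Theta_{\tau i}(u^{-1})$ is what makes the right-hand sides of \eqref{rel:BB1}--\eqref{rel:BB3} and \eqref{rel:Serre2}--\eqref{rel:Serre3} into expansion differences of a single rational function, as in Remark~\ref{rem:expansions}.

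\textbf{Step 1: $\Theta$--$B$ relations.} Write each $\hat B_j(v)$ as a finite sum $\sum_s\delta(qx_{j,s}v)\,c_{j,s}D_{j,s}$, plus the constant delta terms at a symplectic vertex. Conjugating $\hat\Theta_i(u)$ past $D_{j,s}$ shifts $x_{j,s}$ by $q^2$ (and $x_{\tau j,s}$ by $q^{-2}$). The ratio $\hat\Theta_i(u)/D_{j,s}\hat\Theta_i(u)D_{j,s}^{-1}$ only involves the factors $V_j,V_{\tau j},A_i,A_{\tau i}$ containing $x_{j,s}$. Evaluating at $x_{j,s}=q^{-1}v^{-1}$ via \eqref{eq:delta-laurent} and Lemma~\ref{lem:coefficientwise-delta} should give exactly $\frac{(q^{c_{ij}}u-v)(q^{c_{\tau i,j}}u^{-1}-v)}{(u-q^{c_{ij}}v)(u^{-1}-q^{c_{\tau i,j}}v)}$. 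This is a case check over whether $j=i$, $j=\tau i$, $j$ is adjacent to $i$ or $\tau i$, and the vertex type. For the symplectic constant terms $\delta(\pm u)$ we have $v=\pm1$, and we must check that the rational factor equals $1$ there. This is consistent because $c_{ij}$ and $c_{\tau i,j}$ enter symmetrically under $v\mapsto v^{-1}$.

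\textbf{Step 2: quadratic $BB$ relations.} For \eqref{rel:BB4} and \eqref{rel:BB5} with $j\ne\tau i$, expand both orderings into double delta sums and normal order the shifts. The off-diagonal terms ($(i,r)\ne(j,s)$) cancel by the standard GKLO computation: the factor $(u-q^{c_{ij}}v)$ evaluated at the delta support compensates the shift of $A_i$ or $V_{i,r}$. Terms with identical $D$-support but different delta supports are separated using Lemma~\ref{lem:delta-cancellation}. For \eqref{rel:BB1}--\eqref{rel:BB3}, the terms $D_{i,r}D_{\tau i,s}$ with $s=r$ produce $D_{i,r}D_{i,r}^{-1}=1$, since $x_{\tau i,r}=x_{i,r}^{-1}$ (and likewise for $r,r'$ at fixed vertices). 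These surviving diagonal terms equal $\delta(uv)$ times a sum over $r$ of residues of $\hat\Theta_i$ at $u=q^{-1}x_{i,r}^{-1}$ and its partners. A partial fraction expansion identifies this sum with $\hat\Theta_i^+-\hat\Theta_i^-$; the normalizations $p_i(u)$ and $(q-q^{-1})^2$ were chosen so that the residues match. At a symplectic vertex, the extra poles of $\hat\Theta_i$ at $u=\pm1$ (from $p_i=q/(u-u^{-1})^2$) must be matched by the cross terms between the constant $\delta(\pm u)$ pieces and the $D$-terms; the factor $\tfrac12 A_i(\pm q^{-1})/V_i(\pm q^{-1})$ should provide exactly those residues. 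At an orthogonal vertex, the factor $(1-x_{i,r}^{-2})$ plays the analogous role.

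\textbf{Step 3: Serre relations (main obstacle).} For \eqref{rel:Serre1}, restrict to triple delta supports and reduce to the known rational identity for ordinary shifted quantum affine GKLO representations \cite{FT19}. For \eqref{rel:Serre2} and \eqref{rel:Serre3}, the left side has terms in which $D_{i,r}$ meets $D_{i,r'}$ (or $D_{\tau i,r}$) and cancels, leaving $\delta(u_1u_2)$ or $\delta(u_2v)$ times a single $B$-current. Here we compare with the expansion differences of $\mathcal H^{(2)}_{i,v}$ and $\mathcal H^{(3)}_{i,u_1}$. We first clear denominators with Lemma~\ref{lem:delta-cancellation}, then verify a residue identity pole by pole, following \cite[Sections~4.5, 5.2.2]{LWW26}. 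I expect the symplectic Serre relation \eqref{rel:Serre2} to be the hardest case. There the constant $\delta(\pm u)$ terms of $\hat B_i$ interact with $\hat B_j(v)$ and with the double pole of $\hat\Theta_i$ at $u=\pm1$, so the cancellation requires careful bookkeeping of second-order residues. I would handle it by first checking the case $v_i=2$ explicitly, then proving the general case by a residue argument in $u_1$.
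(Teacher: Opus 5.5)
Your skeleton matches the paper's: check each relation of Definition~\ref{def:iquant} on normal-ordered delta sums, using exchange identities for \eqref{rel:HB}, \eqref{rel:BB4}, \eqref{rel:BB5}, and Lemma~\ref{lem:res} for \eqref{rel:BB1}--\eqref{rel:BB3}.

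\textbf{The symplectic case of \eqref{rel:BB2} fails as written.} This is the one genuinely new feature of the construction. You propose to match the poles of $\Gamma(u)=\frac{u-u^{-1}}{q^{-1}-q}\hat\Theta_i(u)$ at $u=\pm1$ with cross terms between the scalar pieces $\delta(\epsilon u)X_\epsilon$ and the shift terms $\delta(qx_ru)X_r$. Those cross terms cannot produce them. Each carries $D_{i,r}$ and is supported at $(u,v)=(\epsilon,q^{-1}x_r^{-1})$ or its transpose, off $uv=1$, while the right-hand side is shift-free. In fact they cancel exactly like two distinct physical indices, since $X_\epsilon X_r=C_{i,i}(x_r,x_\epsilon)X_rX_\epsilon$ with $x_\epsilon=\epsilon q^{-1}$; the $X_+X_-$ terms cancel as well. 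The residues at $\pm1$ come instead from the squares $X_\epsilon^2$. Their coefficient on $\delta(\epsilon u)\delta(\epsilon v)$ is $(u-q^2v)+(v-q^2u)=2\epsilon(1-q^2)$, and
\[
 \Res_{u=\epsilon}\frac{\Gamma(u)}{u}\,du=\frac{\epsilon(1-q^2)}{2}\Bigl(\frac{A_i(\epsilon q^{-1})}{V_i(\epsilon q^{-1})}\Bigr)^2=2\epsilon(1-q^2)X_\epsilon^2 .
\]
This is exactly why the scalar terms carry the factor $\tfrac12$.

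A smaller point: in \eqref{rel:HB} the scalar terms of $\hat B_j$ are harmless because $c_{ij}=c_{\tau i,j}$ when $j=\tau j$, since $\tau$ preserves $C$. It is this equality, not a symmetry under $v\mapsto v^{-1}$, that makes the rational factor identically $1$ at $v=\pm1$.

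\textbf{Serre relations.} Here you take a different and much heavier route: direct triple sums, an FT19-type rational identity, and pole-by-pole residues as in LWW26. The paper multiplies by $P=(u_1-qv)(u_2-qv)$. It then uses only the mixed relation $(u-qv)\hat B_j(v)\hat B_i(u)=(qu-v)\hat B_i(u)\hat B_j(v)$ to obtain
\[
 P\cdot\hat{\mathbb S}_{i,j}(u_1,u_2\mid v)=(q^{-1}-q)L_i(u_1,u_2)\,v\,\hat B_j(v),\qquad L_i(u_1,u_2)=\Sym_{u_1,u_2}\bigl((u_1-q^2u_2)\hat B_i(u_1)\hat B_i(u_2)\bigr).
\]
This reduces each Serre relation to a quadratic one:
\begin{itemize}
\item \eqref{rel:Serre1} is $L_i=0$, i.e.\ \eqref{rel:BB5} with $j=i$.
\item \eqref{rel:Serre2} is \eqref{rel:BB2} rewritten, because $P(u,u^{-1},v)\mathcal H^{(2)}_{i,v}(u)=(u-u^{-1})v\hat\Theta_i(u)$.
\item \eqref{rel:Serre3} is the same computation with the correction term coming from \eqref{rel:BB3}.
\end{itemize}
What remains is to cancel $P$ by Lemma~\ref{lem:delta-cancellation}. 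This needs $P\neq0$ on every delta support of both sides, which follows from the odd $q$-exponents of the support values and from coordinate independence at the symplectic supports $\pm1$. Your ``clear denominators'' step needs the same check, but you do not address it.

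In particular, the symplectic \eqref{rel:Serre2}, which you expect to be hardest, follows at once from \eqref{rel:BB2}, and no second-order residues appear. On $\delta(u_1u_2)$ the factor $u_1-u_2=u_1-u_1^{-1}$ in $h^{(2)}$ lowers the double pole of $p_i$ at $\pm1$ to a simple one. Your direct approach can be made to work. However, it forces you to redo by hand the coincident-index, scalar-index and self-adjacent ($c_{i,\tau i}=-1$) cases, which the reduction to quadratic relations absorbs.
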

\begin{proof}
The leading terms have the prescribed degrees by
Lemma~\ref{lem:Thetauniform}. Appendices~\ref{sec:appen1}--\ref{sec:appen3}
verify the full presentation of Definition~\ref{def:iquant}.
\end{proof}
\begin{remark}
    If we change to $\eta_h=1$ for all $h\in Q_1^\tau$ in \eqref{eq:arrowsigns}, the second wedge square in \eqref{eq:signedmatter} will become the second symmetric square. We can get similar results. For the cohomology case, see \cite{wang2025quivers}.
\end{remark}
Recall that $M_i=\prod_{r=1}^{v_i}x_{i,r}^{1/2}$ for $i\notin Q_0^{\tau}$ and $M_i=1$ for $i\in Q_0^{\tau}$.
\begin{theorem}\label{thm:Coulomb}
   The homomorphism $\psi$ in Theorem \ref{thm:gklo} factors through a unique $K^F(\pt)\otimes_{\mathbb{C}}\mathbb{C}(q)$-algebra homomorphism $\Psi$:
   \[\xymatrix{\Ui_\mu\otimes_{\mathbb{C}} K^F(\pt) \ar[rr]^-\Psi \ar[rd]^-\psi & &\widetilde{\mathcal{A}_q}\otimes_{\mathbb{C}[q^{\pm 1}]}\mathbb{C}(q) \ar@{^{(}->}[ld]^-{z^*\circ(\iota_*)^{-1}}\\
   &K^F(\pt)\otimes_{\mathbb{C}}\widetilde{\calD_q(T)}_{\loc},}\] 
    such that for any $i\in Q_0$, $\Psi(\mathbb{K}_{i})=k_i$, $\Psi(\Theta_{i,r})=\hat{\Theta}_{i,r}$; and
    \[\Psi(B_{i,s})= \begin{cases}
    q^{-s}M_i\prod_{h:s(h)=i}M_{t(h)}\cdot \mathcal Q_i^{-s}\cdot [\calO_{\calR_{\epsilon_{i,1}}}],&i\in Q_0^+,\\
    q^sM_i\prod_{h:s(h)=i}M_{t(h)}\cdot \mathcal S_{\tau i}^{s}\cdot [\calO_{\calR_{-\epsilon_{\tau i,v_i}}}],
    &i\in\tau(Q_0^+),\\
    q^{-s}\prod_{h:s(h)=i}M_{t(h)}\cdot \mathcal L_i^{-s}\cdot [\calO_{\calR_{\epsilon_{i,1}}}],&i\in Q_O,\\
    q^{-s}\prod_{h:s(h)=i}M_{t(h)}\cdot \tilde{p}_*\big(\mathcal L_i^{-s}\cdot [\calO_{\tilde{\calR}_{G,\bfN,\leq \epsilon_{i,1}}}]\big)+\cdots,&i\in Q_S,
 \end{cases}\]
where $\cdots$ denotes some element in $K^{\widetilde{G}\times F\times \widetilde{\mathbb{C}}_q^\times}(\pt)$ determined by Proposition \ref{prop:ThetaBS}.
\end{theorem}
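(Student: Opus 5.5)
The factorization follows from the injectivity of the localization embedding together with the fact that $\psi$ sends generators into the image. Write $\iota:=z^*\circ(\iota_*)^{-1}$, extended $\mathbb{C}(q)$-linearly and $K^{\widetilde G\times F}(\pt)$-linearly to
\[
\widetilde{\mathcal A_q}\otimes_{\mathbb C[q^{\pm1}]}\mathbb C(q)\hookrightarrow K^F(\pt)\otimes_{\mathbb C}\widetilde{\calD_q(T)}_{\loc}.
\]
By definition, $\widetilde{\mathcal A_q}$ is a subalgebra of $K^F(\pt)\otimes_{\mathbb C}\widetilde{\calD_q(T)}$, so this map is the inclusion tensored with $\mathbb C(q)$. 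It remains injective because $\mathbb C(q)$ is a flat, indeed localizing, extension of $\mathbb C[q^{\pm1}]$ and the target is torsion free over $\mathbb C[q^{\pm1}]$. In particular the diagonal arrow in the diagram is an injective algebra homomorphism.

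It therefore suffices to show that the image of $\psi$ lies in the subalgebra $\widetilde{\mathcal A_q}\otimes\mathbb C(q)$. Once this holds, $\Psi:=\iota^{-1}\circ\psi$ is a well-defined $K^F(\pt)\otimes\mathbb C(q)$-algebra homomorphism. It is unique because $\iota$ is injective: any $\Psi'$ with $\iota\circ\Psi'=\psi$ must equal $\Psi$. Since $\Ui_\mu\otimes K^F(\pt)$ is generated by $\mathbb K_i^{\pm1}$, $\Theta_{i,r}$, $B_{i,s}$ and $K^F(\pt)$, and $\widetilde{\mathcal A_q}\otimes\mathbb C(q)$ is a subalgebra containing $K^F(\pt)$, we only need to check generators.

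The generators are handled as follows.

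\textbf{The $\mathbb K_i^{\pm1}$ and $\Theta_{i,r}$.} Proposition~\ref{prop:ThetaBS} gives $k_i\in\widetilde{\mathcal A_q}$ and $(q-q^{-1})\hat\Theta_{i,r}\in\widetilde{\mathcal A_q}$. After tensoring with $\mathbb C(q)$ we may divide by $q-q^{-1}$. By Lemma~\ref{lem:Thetauniform}, $k_i$ is a unit in $K^{\widetilde G\times F}(\pt)\otimes\mathbb C(q)\subset\widetilde{\mathcal A_q}\otimes\mathbb C(q)$, so $k_i^{-1}$ also lies in the subalgebra. This gives $\Psi(\mathbb K_i)=k_i$ and $\Psi(\Theta_{i,r})=\hat\Theta_{i,r}$.

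\textbf{The $B_{i,s}$.} Proposition~\ref{prop:ThetaBS} expresses each coefficient $\hat B_{i,s}$ of $\hat B_i(u)$ as a product of two factors. The first is the equivariant character $q^{\mp s}M_i\prod_{h:s(h)=i}M_{t(h)}$, which lies in $K^{\widetilde G\times F}(\pt)[q^{\pm1}]$. The second is the image under $\iota$ of the dressed minuscule class $[\calO_{\calR_{\epsilon_{i,1}}}]$ or $[\calO_{\calR_{-\epsilon_{\tau i,v_i}}}]$ (from Propositions~\ref{prop:fmono}, \ref{prop:fmono2}, \ref{prop:orthmon}), or of the quasi-minuscule class $\tilde p_*(\cdots)$ (from Lemma~\ref{lem:sympmono}). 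In the symplectic case there is an additional term in $K^{\widetilde G\times F\times\widetilde{\mathbb C}_q^\times}(\pt)$. Every one of these pieces lies in $\widetilde{\mathcal A_q}$, which yields the stated formula for $\Psi(B_{i,s})$.

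The main care is needed in the symplectic case, where two points must be checked:
\begin{enumerate}[(a)]
\item The constant terms $\sum_\epsilon \delta(\epsilon u)A_i(\epsilon q^{-1})/(2V_i(\epsilon q^{-1}))$ in \eqref{eq:Bsymplectic} must match exactly the terms $\tfrac12 f(\pm q^{-1})g(\pm q^{-1})/\prod_r(1\mp qx_{i,r})$ of Lemma~\ref{lem:sympmono}. Under the substitution $f(x)=(qx)^{-s}$, extracting the coefficient of $u^{-s}$ from $\delta(\pm u)$ produces the sign $(\pm1)^s$, and this must agree. One also needs $A_i(x)=g(x)$ up to the factor $\prod M_{t(h)}$ coming from the $\hat V$'s, and $V_i(\pm q^{-1})=\prod_r(1\mp qx_{i,r})$, using $M_i=1$ at a fixed vertex.
\item The remaining ``$\cdots$'' must lie in $K^{\widetilde G\times F}(\pt)\otimes\mathbb C[q^{\pm1}]$, which is contained in $\widetilde{\mathcal A_q}$ by construction. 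That $\Psi(B_{i,s})$ is then defined up to exactly this ambiguity is the content of the ``$\cdots$'' in the statement.
\end{enumerate}
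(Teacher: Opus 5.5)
Your proposal is correct and follows the paper's route: the paper deduces the factorization in one line from Theorem~\ref{thm:gklo}, Proposition~\ref{prop:ThetaBS}, and injectivity of the localization embedding. Your extra checks fill in what that line leaves implicit: flatness of $\mathbb{C}(q)$ over $\mathbb{C}[q^{\pm1}]$, invertibility of $k_i$ in $K^{\widetilde G\times F}(\pt)\otimes\mathbb{C}(q)$, and matching the two scalar delta terms at $u=\pm1$ (with the sign $(\pm1)^s$) against the constants in Lemma~\ref{lem:sympmono}.
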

\begin{proof}
This follows from Theorem \ref{thm:gklo} and Proposition \ref{prop:ThetaBS}.
\end{proof}

\begin{remark}
The operators are obtained from the signed gauge theory, so their
geometric interpretation is part of the construction. GKLO
homomorphisms originated in \cite{GKLO05,GKLOquant} and have been
extended to shifted Yangians and quantum affine algebras in
\cite{KWWY14,BFN19,FT19}. For the $\mathrm{i}$analogues see
\cite{bartlett2025gklo,li2026gklorepresentationsshiftedquantum,lu2025shifted,LWW26,SSX26,SSX26b,wang2025quivers}. Our operators $\hat B_i(u)$ are quite different than the ones $\hat B^{\mathrm{LWW}}_i(u)$ in \cite{LWW26}. The symplectic current \eqref{eq:Bsymplectic} contains two scalar delta terms, supported at $u=\pm1$. By comparison, the inhomogeneous fixed-vertex current in \cite[Theorem~3.1, (3.5)]{LWW26} contains a single scalar delta term, supported at $u=1$.
\end{remark}

\subsection{Specializations of the construction}
This construction gives a uniform treatment for the Satake diagram $\mathrm{AIII}_{n}^{(\tau)}$.

\subsubsection{Fixed-point-free involutions}
If $Q_0^\tau=\varnothing$, then
\[
 G=\prod_{i\in Q_0^+}\GL(V_i),\qquad
 \bfN\cong E_V^\sigma\oplus\bigoplus_{i\in Q_0}\Hom(V_i,W_i),
\]
where $E_V^\sigma$ is the signed fixed arrow space. Taking all arrow
signs to be $-1$ gives the convention
$f_{\tau h}=-f_h^t$ and the exterior-square term at each fixed arrow.

\begin{example}[Type $\mathrm{AIII}_{2n}^{(\tau)}$]
Take the Satake diagram in Figure \ref{fig:AIII2n}, with $\tau(i)=2n+1-i$ and
$Q_0^+=\{1,\ldots,n\}$. All vertices are nonfixed. Orient the
underlying quiver as
\[
 1\longrightarrow 2\longrightarrow\cdots\longrightarrow n
 \longrightarrow n+1\longrightarrow\cdots\longrightarrow 2n,
\]
so that $\tau$ reverses arrows, and choose all arrow signs to be $-1$.
The arrows $i\to i+1$, $1\leq i<n$, represent the nonfixed arrow
orbits; the middle arrow $n\to n+1$ is fixed.
\begin{figure}[htbp]
\centering
\begin{tikzpicture}[scale=.4]
\foreach \x/\lab/\dual in {0/1/{2n},4/2/{2n-1},12/{n-1}/{n+2},16/n/{n+1}} {
 \draw[thick] (\x,0) circle (.3);
 \draw[thick] (\x,-3) circle (.3);
 \node (evenTop\x) at (\x,.75) {$\lab$};
 \node (evenBottom\x) at (\x,-3.75) {$\dual$};
}
\node at (8,0) {$\dots$};
\node at (8,-3) {$\dots$};
\foreach \y in {0,-3} {
 \draw[thick] (.3,\y) -- (3.7,\y);
 \draw[thick] (4.3,\y) -- (6.8,\y);
 \draw[thick] (9.2,\y) -- (11.7,\y);
 \draw[thick] (12.3,\y) -- (15.7,\y);
}
\draw[thick] (16,-.3) -- (16,-2.7);
\foreach \x in {-.5,3.5,15.5}
 \draw[thick,<->,red,bend left=50]
 (\x+.3,-2.5) to (\x+.3,-.5);
\draw[thick,<->,red,bend left=50] (11.7,-2.5) to (11.7,-.5);
\end{tikzpicture}
\caption{Satake diagram $\mathrm{AIII}_{2n}^{(\tau)}$}
\label{fig:AIII2n}
\end{figure}
We obtain
\begin{align*}
 G_V^\sigma\cong\prod_{i=1}^n\GL(V_i),\quad
 E_V^\sigma\cong\bigoplus_{i=1}^{n-1}\Hom(V_i,V_{i+1})
                 \oplus\bigwedge\nolimits^2 V_n^*.
\end{align*}
Indeed, a map $f:V_n\to V_{n+1}=V_n^*$ at the fixed middle
arrow satisfies $f=-f^t$, so it is an alternating bilinear form on
$V_n$. The displayed exterior-square summand carries the
contragredient action of $\GL(V_n)$. This example is discussed in \cite[Section 3(ix)(C)]{Nak25}.
\end{example}

\subsubsection{Type $\mathrm{AIII}_{2n-1}^{(\tau)}$}
For the diagram $A_{2n-1}$ take $\tau(i)=2n-i$, $Q_0^+=\{1,\ldots,n-1\}$,
and $Q_0^\tau=\{n\}$. Choose any orientation reversed by
$\tau$. At the fixed vertex, choose
$W_n=U_n\oplus U_n^*$. The common polarization is
\begin{equation*}
 \bfN=\bigoplus_{h\in Q_1^+}\Hom(V_{s(h)},V_{t(h)})
 \oplus\bigoplus_{\substack{1\leq i\leq 2n-1\\ i\neq n}}\Hom(V_i,W_i)\oplus U_n\otimes V_n,
\end{equation*}
with flavor group
$F=\displaystyle\prod_{\substack{1\leq i\leq 2n-1\\ i\neq n}}\GL(W_i)\times\GL(U_n)$.

\begin{figure}[htbp]
\centering
\begin{tikzpicture}[scale=.4]
\node at (0,0.75) {$1$};
\node at (4,0.75) {$2$};
\node at (12,0.75) {$n-1$};    
\node at (0,-3.75) {$2n-1$};
\node at (4,-3.75) {$2n-2$};
\node at (12,-3.75) {$n+1$};
\node at (8,0) {$\dots$};
\node at (8,-3) {$\dots$};

\draw[thick] (12,0) circle (0.3);
\draw[thick] (12,-3) circle (0.3);

\draw[thick] (0,0) -- (6.5,0);
\draw[thick] (0,-3) -- (6.5,-3);
    
\draw[thick] (9,0) -- (12, 0) -- (14,-1.5);
\draw[thick] (9,-3) -- (12, -3) -- (14,-1.5);

    \foreach \x in {0,2,6} 
{\draw[thick,xshift=\x cm,fill=white] (\x, 0) circle (0.3); 
\draw[thick,xshift=\x cm,fill=white] (\x, -3) circle (0.3);}
\draw[thick,fill=white] (14,-1.5) circle (0.3);
\node at (14,-2.5) {$n$};

\draw[red, thick, <-] (14.5, -1.8) arc[start angle=210, end angle=510, radius=0.7];

\foreach \x in {-.5,3.5}
\draw[thick,<->, red, bend left=50] (\x+0.3,-2.5) to (\x+0.3,-0.5);
\draw[thick,<->, red, bend left=50] (11.7,-2.5) to (11.7,-0.5);

\end{tikzpicture}
\caption{Satake diagram $\mathrm{AIII}_{2n-1}^{(\tau)}$}

\end{figure}

\begin{example}
Take $Q_O=\{n\}$, with $v_n\geq3$, and $Q_S=\varnothing$.
Then $V_n$ is orthogonal, $W_n$ is symplectic, and
\[
 G=\prod_{i=1}^{n-1}\GL(V_i)\times\SO(V_n).
\]
The fixed current is \eqref{eq:Borthogonal}; its modes are the
minuscule classes dressed by $\mathcal L_n^{-s}$. Theorem~\ref{thm:Coulomb} gives the orthogonal type $\mathrm{AIII}_{2n-1}^{(\tau)}$ realization.
\end{example}

\begin{example}
Take $Q_S=\{n\}$ and $Q_O=\varnothing$, with $v_n$ positive and even.
Then $V_n$ is symplectic, $W_n$ is orthogonal of even dimension, and
\[
 G=\prod_{i=1}^{n-1}\GL(V_i)\times\Sp(V_n).
\]
The fixed current is \eqref{eq:Bsymplectic}. Its two terms at $u=\pm1$
complete the quasi-minuscule monopole as in Lemma~\ref{lem:sympmono}.
Theorem~\ref{thm:Coulomb} gives the
symplectic type $\mathrm{AIII}_{2n-1}^{(\tau)}$ realization.
\end{example}

These polarizations give cotangent versions of the gauge theories
in \cite[Section 3(ix)(B)]{Nak25}.

\appendix

\section{Paired-vertex relations}\label{sec:appen1}

Throughout the appendices, we use the formal-distribution conventions of Section~\ref{sec:formal-distributions}.

We verify the relations in Definition~\ref{def:iquant} for the operators $\hat{B}_i(u)$ and $\hat{\Theta}_i(u)$, beginning with those involving nonfixed vertices.

\subsection{Exchange factors and residues}

The following formal-distribution identity converts
the difference of two expansions into delta functions. 
\begin{lemma}[{\cite[Lemma C.2]{FT19}}]\label{lem:res}
    For any rational function $ \gamma(u) $ with simple poles $ \{a_k\}\subset K^\times $ and possibly poles of higher order at $ u=0,\infty $, we have
    \begin{equation*}
        \gamma(u)^+-\gamma(u)^-=\sum_k\delta\left( \frac{u}{a_k} \right) \Res_{u=a_k}\frac{1}{u}\gamma(u)du,
    \end{equation*}
    where $ \gamma(u)^+ $ (resp. $ \gamma(u)^- $) is the Laurent expansion of $ \gamma(u) $ at $ \infty $ (resp. $ 0 $).
\end{lemma}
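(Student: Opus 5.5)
The plan is to prove the identity coefficientwise, reducing it to the elementary one-variable fact
\[
\sum_{n\in\mathbb Z}\frac{u^{-n}}{a^{-n}}=\delta(u/a),
\]
which is the partial-fraction computation for a simple pole. Since both sides are additive in $\gamma$, we decompose $\gamma$ by partial fractions over $K$:
\[
\gamma(u)=P(u)+\sum_k\frac{c_k}{u-a_k},
\]
where $P\in K[u^{\pm1}]$ absorbs all polar parts at $0$ and $\infty$. This decomposition is valid because the finite nonzero poles $a_k$ are simple by assumption, and every other pole is at $0$ or $\infty$.

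For the Laurent polynomial part $P$, both expansions agree, so $P^+-P^-=0$. The right-hand side also receives no contribution from $P$, since $P/u$ has no residue at any $a_k\neq0$. So this part contributes nothing to either side.

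For a single term $\gamma(u)=c/(u-a)$ with $a\in K^\times$, expanding at infinity gives
\[
\frac{c}{u}\cdot\frac{1}{1-a/u}=c\sum_{n\ge1}a^{n-1}u^{-n}.
\]
Expanding at zero gives
\[
-\frac{c}{a}\cdot\frac{1}{1-u/a}=-c\sum_{n\ge0}a^{-n-1}u^{n}.
\]
Subtracting, the difference is $c\,a^{-1}\sum_{m\in\mathbb Z}(u/a)^m=\frac{c}{a}\,\delta(u/a)$. On the other side, the residue of $\frac{1}{u}\cdot\frac{c}{u-a}$ at $u=a$ is exactly $c/a$. The residues of the other summands at $a$ vanish, because their poles lie elsewhere. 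Summing over $k$ then gives the identity.

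The only point needing care is the bookkeeping. We must check that the summand $\frac{1}{u}\gamma(u)$ introduces no extra residue at the nonzero points, which holds since $1/u$ is regular there. We must also check that higher-order behavior at $0$ and $\infty$ is entirely absorbed into $P$, which holds because the poles $a_k$ are simple and distinct from $0$. We expect no real obstacle beyond this.
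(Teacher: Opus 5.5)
Your proof is correct and complete. The partial-fraction decomposition $\gamma=P+\sum_k c_k/(u-a_k)$ with $P\in K[u^{\pm1}]$ reduces everything to single simple-pole terms, and for each of these the two expansions differ by $\frac{c}{a}\,\delta(u/a)$ with $\frac{c}{a}=\Res_{u=a}\frac{c}{u(u-a)}\,du$. The paper gives no argument of its own and only quotes \cite[Lemma C.2]{FT19}; your partial-fraction verification is the standard argument for that cited lemma.
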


For a nonfixed vertex $i\notin Q_0^\tau$ put
\[
 X_{i,r}=\frac{A_i(x_{i,r})}
 {(1-qx_{i,r}^2)^{e_i}V_{i,r}(x_{i,r})}D_{i,r},
 \qquad \hat{B}_i(u)=\sum_r\delta(qx_{i,r}u)X_{i,r}.
\]
For a fixed vertex $j$, $X_{j,s}$ denotes the coefficient of
$\delta(qx_{j,s}v)$ in \eqref{eq:Borthogonal} or
\eqref{eq:Bsymplectic}. The symplectic case includes the two
indices $s=\epsilon\in\{+,-\}$, with
$x_{j,\epsilon}=\epsilon q^{-1}$ and
$X_{j,\epsilon}=A_j(\epsilon q^{-1})/(2V_j(\epsilon q^{-1}))$.
No difference operator is attached to these two indices.
For sums of current terms write $\mathcal J_j=I_j$ outside $Q_S$
and $\mathcal J_j=I_j\sqcup\{+,-\}$ on $Q_S$.

Direct calculation gives the following exchange identities.

\begin{lemma}\label{lem:commBB}
    Fix $i\notin Q_0^\tau$. If $ j\notin\{i,\tau i\} $ or $ r\neq s $, we have $  X_{j,s} X_{i,r}=C_{i,j,r,s} X_{i,r} X_{j,s} $, where
    \begin{equation*}
        C_{i,j,r,s} = C_{i,j}(x_{i,r},x_{j,s}), \qquad C_{i,j}(u,v):=\frac{q^{c_{ij}}u-v}{u-q^{c_{ij}}v}.
    \end{equation*}
\end{lemma}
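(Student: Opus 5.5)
The identity is a direct computation in $\calD_q(T)$ using only the commutation rule $D_{i,r}x_{j,s}=q^{2(\cdots)}x_{j,s}D_{i,r}$. I will write $X_{i,r}=\phi_{i,r}(x)D_{i,r}$ and $X_{j,s}=\phi_{j,s}(x)D_{j,s}$, where $\phi_{i,r}=A_i(x_{i,r})/\bigl((1-qx_{i,r}^2)^{e_i}V_{i,r}(x_{i,r})\bigr)$. In the symplectic scalar case $s=\pm$ there is no shift, so $D_{j,s}=1$. Moving all shifts to the right gives
\[
X_{j,s}X_{i,r}=\phi_{j,s}\,(D_{j,s}\phi_{i,r}D_{j,s}^{-1})\,D_{j,s}D_{i,r},
\qquad
X_{i,r}X_{j,s}=\phi_{i,r}\,(D_{i,r}\phi_{j,s}D_{i,r}^{-1})\,D_{i,r}D_{j,s}.
\]
The shifts commute, so the claim is equivalent to the scalar identity
\[
\frac{D_{j,s}(\phi_{i,r})}{\phi_{i,r}}\cdot\frac{\phi_{j,s}}{D_{i,r}(\phi_{j,s})}=C_{i,j}(x_{i,r},x_{j,s}).
\]
Here $D(\phi)$ denotes the conjugated function. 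The plan is to compute each of the two ratios as a product of elementary factors and check that only one factor survives.

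First I record how each shift acts. $D_{i,r}$ multiplies $x_{i,r}$ by $q^2$ and $x_{\tau i,r}=x_{i,r}^{-1}$ by $q^{-2}$. It fixes every other coordinate, and it also fixes coordinates at a fixed vertex $j$ unless $j\in\{i,\tau i\}$, which is impossible. The square-root factors $M_i$ also shift: $D_{i,r}M_i=qM_i$. Next I identify which factors of $\phi_{i,r}$ involve the variables moved by $D_{j,s}$. These are the factors $\hat V_{t(h)}(qx_{i,r})$ for arrows $h$ with $t(h)\in\{j,\tau j\}$, the $M$-prefactors, and, when $j=\tau i$, also $V_{i,r}$ and $(1-qx_{i,r}^2)$. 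The contribution from $\hat V_j(qx_{i,r})$ is the factor $(1-qx_{i,r}/x_{j,s})$ changing to $(1-q^{-1}x_{i,r}/x_{j,s})$, together with the $M_j$ shift by $q$. For an edge $i\to j$, the symmetric contribution coming from $D_{i,r}$ acting on $\phi_{j,s}$ supplies the reciprocal pattern. Combining the two, the factor $(u-q^{-1}v)/(u-qv)$ with $u=x_{i,r}$, $v=x_{j,s}$ remains, up to monomials. These monomials should be cancelled exactly by the $q$-powers from the $M$'s, leaving $C_{i,j}$ with $c_{ij}=-1$.

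I will split into cases. (a) If $c_{ij}=0$ and $j\neq\tau i$, no variables interact and both ratios are $1=C_{i,j}$. (b) If $j=i$ and $r\neq s$, the relevant factors are in $V_{i,r}$ and $V_{i,s}$. The ratio reduces to $(q^2x_{i,r}-x_{i,s})/(x_{i,r}-q^2x_{i,s})$ after the $M_i^{\pm1}$ prefactor shifts contribute $q^{\pm1}$. (c) If $i,j$ are adjacent, the computation is the one sketched above. When $j$ is fixed, I also need the factor $(1-x_{j,s}^{-2})$ and the orthogonal or symplectic denominators. These depend only on $x_j$, so they are untouched by $D_{i,r}$. (d) If $j=\tau i$, the variables are linked by $x_{\tau i,s}=x_{i,s}^{-1}$. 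Here the $e_i$ factor $(1-qx_{i,r}^2)$ and the fixed-arrow exclusions must be tracked, and $c_{i,\tau i}\in\{0,-1\}$ decides which terms appear.

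I expect case (d) and the bookkeeping of the $M$-monomials to be the main obstacle. In those cases spurious powers of $q$ and of $x$ must cancel exactly, and sign conventions such as $x_{\tau i,a}=x_{i,a}^{-1}$ inside $\hat V_{\tau i}(qx_{i,r})$ can produce reversed factors like $(1-qx_{i,r}x_{i,a})$. I would first verify case (d) with $v_i=1$ numerically, and only then write the general cancellation.
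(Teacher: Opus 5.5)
Your reduction to the scalar identity $\frac{D_{j,s}(\phi_{i,r})}{\phi_{i,r}}\cdot\frac{\phi_{j,s}}{D_{i,r}(\phi_{j,s})}=C_{i,j}(x_{i,r},x_{j,s})$ is exactly the ``direct calculation'' the paper relies on. However, your case analysis misses the terms that enter through the $\tau$-partner coordinates $x_{\tau k,b}=x_{k,b}^{-1}$, and several of your intermediate claims are false as a result.

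\textbf{Case (a).} The claim ``no variables interact'' fails whenever $j$ is nonfixed and there is an arrow $h\colon i\to\tau j$ (equivalently $\tau h\colon j\to\tau i$). This is compatible with $c_{ij}=0$ and $j\ne\tau i$. It already happens in the paper's $\mathrm{AIII}^{(\tau)}_{2n}$ example with $i=n+1$, $j=n-1$ ($n\ge2$): $\phi_{n+1,r}$ contains $\hat V_{n+2}(qx_{n+1,r})=M_{n-1}^{-1}\prod_b(1-qx_{n+1,r}x_{n-1,b})$, which $D_{n-1,s}$ moves.

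What is true is the following. Such an arrow contributes $q^{-1}(1-q^3x_{i,r}x_{j,s})/(1-qx_{i,r}x_{j,s})$ to the first ratio. Through $\hat V_{\tau i}(qx_{j,s})$ in $\phi_{j,s}$, it contributes exactly the inverse to the second ratio. An arrow $\tau j\to i$ contributes nothing. This cancellation is the missing ingredient, and it is what makes the exchange factor depend only on $c_{ij}$.

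\textbf{The same mechanism in your other cases.}
\begin{itemize}
\item In (b) with $e_i=1$, the fixed arrow puts $x_{i,s}$ into $\hat V_{\tau i}(qx_{i,r})$. So the relevant factors are not only in $V_{i,r}$ and $V_{i,s}$.
\item In (c), it is needed when $i$ is adjacent to both $j$ and $\tau j$.
\item In (c) with $j$ fixed, $D_{j,s}$ also moves $x_{j,s'}=x_{j,s}^{-1}$ in $\hat V_j(qx_{i,r})$. This produces the cross factor $(1-q^3x_{i,r}x_{j,s})/(1-qx_{i,r}x_{j,s})$, which is cancelled by the factor from the forced arrow $j\to\tau i$. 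Its $M_{\tau i}$ also supplies the power of $q$ that $M_j=1$ no longer provides.
\end{itemize}

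\textbf{Two further corrections.}

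First, in (c) with $j$ nonfixed and a single arrow $i\to j$, $\phi_{j,s}$ does not involve $x_i$ at all, so there is no ``reciprocal pattern.'' The whole factor is $q\,(x_{j,s}-q^{-1}x_{i,r})/(x_{j,s}-qx_{i,r})=C_{i,j}(x_{i,r},x_{j,s})$. The $q$ from $M_j$ is part of $C_{i,j}$, not something to be cancelled. Your $(u-q^{-1}v)/(u-qv)$ has $u$ and $v$ interchanged, which would give $C_{i,j}^{-1}$ up to a monomial.

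Second, the proposed $v_i=1$ test of case (d) is vacuous. For $j=\tau i$ the lemma requires $r\ne s$, so $v_i\ge2$. At $r=s$ the identity genuinely fails, and that failure is what produces the residues in Lemmas~\ref{lem:resT0} and~\ref{lem:resT}.

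For $r\ne s$, case (d) is actually easy:
\begin{itemize}
\item $(1-qx_{i,r}^2)^{e_i}$ is inert.
\item The $a=s$ factor of $V_{i,r}$ and the $a=r$ factor of $V_{\tau i,s}$ cancel, together with the $M_i^{\pm1}$ shifts.
\item $C_{i,\tau i}$ comes from the $a=s$ factor of $\hat V_{\tau i}(qx_{i,r})$ if $e_i=1$, or from the $a=r$ factor of $\hat V_i(qx_{\tau i,s})$ if $e_{\tau i}=1$, in each case with the accompanying $M$-shift.
\end{itemize}
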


\begin{lemma}\label{lem:commBT}
    For any $i$ and nonfixed $j$, we have $  X_{j,s}\hat{\Theta}_i(u)=D_{i,j,s}(u)\hat{\Theta}_i(u) X_{j,s} $, where
    \begin{equation*}
        D_{i,j,s}(u):=\frac{u-q^{c_{ij}-1}x_{j,s}^{-1}}{q^{c_{ij}}u-q^{-1}x_{j,s}^{-1}} \cdot \frac{q^{c_{\tau i,j}}u-qx_{j,s}}{u-q^{c_{\tau i,j}+1}x_{j,s}}=C_{i,j}(u,q^{-1}x_{j,s}^{-1})^{-1}C_{\tau i,j}(u,qx_{j,s}).
    \end{equation*}
\end{lemma}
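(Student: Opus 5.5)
The plan is to move $X_{j,s}$ past $\hat\Theta_i(u)$ factor by factor, using only the commutation rules $D_{j,s}x_{k,t}=q^{\pm2}x_{k,t}D_{j,s}$. Since $j$ is nonfixed, $X_{j,s}$ equals a coefficient function times $D_{j,s}$. The $x$-dependent coefficient of $X_{j,s}$ commutes with $\hat\Theta_i(u)$, because both are functions of the $x$'s alone. So we only need
\[
D_{j,s}\hat\Theta_i(u)D_{j,s}^{-1}=D_{i,j,s}(u)\hat\Theta_i(u),
\]
and then commute $D_{i,j,s}(u)$ back past the coefficient, which is harmless since it too depends only on $x$'s.

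The variable $x_{j,s}$ appears in $\hat\Theta_i(u)$ only through the factors involving vertices $j$ and $\tau j$:
\begin{itemize}
\item the $V_i,V_{\tau i}$ denominators when $j\in\{i,\tau i\}$;
\item the $\hat V_{t(h)}$ factors inside $A_i$ and $A_{\tau i}$ when $j$ or $\tau j$ is adjacent to $i$ or $\tau i$.
\end{itemize}
The flavor factors $W_i$, the polynomial $p_i(u)$ and the constant $(q-q^{-1})^2$ are untouched. The $M$-prefactors contribute only a power of $q$. These are $M_i^{\pm1}$, $M_{\tau i}^{\pm1}$ inside $V$, and $M_{t(h)}$ inside $\hat V_{t(h)}$, each containing $x_{j,s}^{\pm1/2}$, and each is shifted by $q^{\pm1}$ under $D_{j,s}$.

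The plan is to handle the cases separately, with $j$ nonfixed throughout.

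\textbf{Case $j=i$.} Conjugation sends $x_{i,s}\mapsto q^2x_{i,s}$ in $V_i(q^{-1}u^{-1})$. Since $x_{\tau i,s}=x_{i,s}^{-1}$, it also sends $x_{\tau i,s}\mapsto q^{-2}x_{\tau i,s}$ in $V_{\tau i}(q^{-1}u)$. The ratio of the shifted to the unshifted single-variable factor, together with the $M$-shift, should give $C_{i,i}(u,q^{-1}x_{i,s}^{-1})^{-1}$ from $V_i$ and $C_{\tau i,i}(u,qx_{i,s})$ from $V_{\tau i}$. If $i$ is adjacent to $\tau i$, the $\hat V$ factors in $A_i$ and $A_{\tau i}$ also contribute. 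These supply the $c_{\tau i,i}=-1$ part of the formula, and the correction from $c_{ii}=2$.

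\textbf{Case $j$ adjacent to $i$ or $\tau i$.} Each arrow $h$ with $s(h)=i$ and $t(h)\in\{j,\tau j\}$ gives a factor $(1-q^2u^{-1}/x_{t(h),b})$ in $A_i(q^{-1}u^{-1})$. Each arrow out of $\tau i$ gives a factor $(1-u/x_{t(h),b})$ in $A_{\tau i}(q^{-1}u)$. After the shift each such factor produces one of the two $C$-factors: a single adjacency gives $c=-1$ in exactly one of $C_{i,j}$ or $C_{\tau i,j}$. The simply-laced assumption guarantees exactly one arrow per edge; note that the arrows into $i$ appear via the arrows out of $\tau i$, since $\tau$ reverses arrows.

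\textbf{Case $j$ not adjacent and $j\notin\{i,\tau i\}$.} Both $C$-factors equal $1$ since $c=0$, and nothing depends on $x_{j,s}$.

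The main obstacle is the bookkeeping in the case $j=i$ when $c_{i,\tau i}=-1$, together with the overall power of $q$. First, there the same variable enters through the $V$'s and through the $\hat V_{\tau i}$ factors of the fixed arrow. Second, the half-integral $M$-shifts and the normalization $\hat V(z)=M\prod(1-z/x)$ must produce exactly the linear-fraction form $(q^{c}u-v)/(u-q^{c}v)$ rather than a $q$-multiple of it. I would check that case by hand with $v_i=1$ first: write each factor as $M^{\pm1}(1-a/x)$, apply the shift, and simplify the rational function directly. Fixed $i$ can be treated the same way, since $p_i$ and the $M_i=1$ conventions only simplify matters.
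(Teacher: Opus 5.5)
Your strategy is the paper's, which states the lemma as a direct calculation: compute $D_{j,s}\hat\Theta_i(u)D_{j,s}^{-1}/\hat\Theta_i(u)$ factor by factor, and commute the $x$-dependent coefficient of $X_{j,s}$ freely. The method works. However, two of the intermediate results you predict are wrong, and if you followed them literally the computation would not close.

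\textbf{The factor in $A_i$.} Since $A_i(z)$ contains $\hat V_{t(h)}(qz)$, the relevant factor of $A_i(q^{-1}u^{-1})$ is $\hat V_{t(h)}(u^{-1})=M_{t(h)}\prod_b(1-u^{-1}x_{t(h),b}^{-1})$, not $(1-q^2u^{-1}/x_{t(h),b})$. For $h\colon i\to j$ the shift multiplies the correct factor by $q\frac{1-q^{-2}u^{-1}x_{j,s}^{-1}}{1-u^{-1}x_{j,s}^{-1}}=C_{i,j}(u,q^{-1}x_{j,s}^{-1})^{-1}$. Your factor would instead give $q\frac{1-u^{-1}x_{j,s}^{-1}}{1-q^2u^{-1}x_{j,s}^{-1}}$, which is not the required $C$-factor.

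\textbf{The case $j=i$.} Here the two denominators do not separately produce the two $C$-factors. Write $y=x_{i,s}$. The shift multiplies $V_i(q^{-1}u^{-1})^{-1}$ by $q\frac{1-quy}{1-q^3uy}$ and $V_{\tau i}(q^{-1}u)^{-1}$ by $q^{-1}\frac{1-qu^{-1}y^{-1}}{1-q^{-1}u^{-1}y^{-1}}$. Neither is a $C$-factor; for $c_{i,\tau i}=0$ the second would have to be $C_{\tau i,i}(u,qy)=1$. Their product, however, is exactly $C_{i,i}(u,q^{-1}y^{-1})^{-1}=\frac{u-qy^{-1}}{q^2u-q^{-1}y^{-1}}$, and the $M_i^{\pm1}$ shifts cancel.

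The factor $C_{\tau i,i}(u,qy)$ then comes entirely from the single fixed arrow. If $e_i=1$ it enters through $\hat V_{\tau i}(u^{-1})$ in $A_i(q^{-1}u^{-1})$; if $e_{\tau i}=1$ it enters through $\hat V_i(u)$ in $A_{\tau i}(q^{-1}u)$. In either case the factor is multiplied by $q^{-1}\frac{u-q^2y}{u-y}=C_{\tau i,i}(u,qy)$. Only one of $A_i$, $A_{\tau i}$ contributes, and there is no further ``correction from $c_{ii}=2$''.

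\textbf{A missing case.} Your case list omits $j=\tau i$, where the $V$-denominators also contribute; for $c_{i,\tau i}=-1$ your adjacency case misses this. It follows from the $j=i$ computation applied at $\tau i$, combined with $\hat\Theta_i(u)=\hat\Theta_{\tau i}(u^{-1})$ and $C_{a,b}(u^{-1},v)=C_{a,b}(u,v^{-1})^{-1}$. Alternatively, the same direct calculation handles it.
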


\begin{lemma}\label{lem:valUnderlineT}
    For $i\notin Q_0^\tau$ and $1\leq r\leq v_i$, set
    \begin{equation*}
        \underline{\hat{\Theta}}_{i,r}(u) := \frac{(1-x_{i,r}/(q^{-1}u^{-1}))(1-x_{\tau i,r}/(q^{-1}u))}{(q-q^{-1})^2} \hat{\Theta}_i(u),
    \end{equation*}
    then for every $ k\in\mathbb{Z} $,
    \begin{equation*}
        \underline{\hat{\Theta}}_{i,r}(q^{-2k-1}x_{i,r}^{-1}) = q^{1-(k+1)c_{i,\tau i}} \left. \frac{u-q^{(2k+1)c_{i,\tau i}}u^{-1}}{u-u^{-1}} \right|_{u=q^{-2k-1}x_{i,r}^{-1}} \cdot D_{i,r}^k  X_{i,r} X_{\tau i,r} D_{\tau i,r}^k.
    \end{equation*}
\end{lemma}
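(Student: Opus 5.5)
The identity is a direct evaluation, and I will prove it by computing both sides at $u=q^{-2k-1}x_{i,r}^{-1}$ and comparing.

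\emph{Right-hand side: normal ordering.} I commute every shift operator to the right. Put $x=x_{i,r}$. Since $x_{\tau i,r}=x^{-1}$ and $D_{\tau i,r}=D_{i,r}^{-1}$, the coefficient of $X_{\tau i,r}$ is the rational function
\[
 \frac{A_{\tau i}(x^{-1})}{(1-qx^{-2})^{e_{\tau i}}V_{\tau i,r}(x^{-1})}.
\]
Conjugating it by $D_{i,r}$ replaces $x$ by $q^2x$, so
\[
 X_{i,r}X_{\tau i,r}=\frac{A_i(x)}{(1-qx^2)^{e_i}V_{i,r}(x)}\cdot\frac{A_{\tau i}(q^{-2}x^{-1})}{(1-q^{-3}x^{-2})^{e_{\tau i}}V_{\tau i,r}(q^{-2}x^{-1})}.
\]
The shifts $D_{i,r}$ and $D_{\tau i,r}=D_{i,r}^{-1}$ cancel, so this product is a pure function. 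The outer $D_{i,r}^k(\cdot)D_{\tau i,r}^k$ then acts by the substitution $x\mapsto q^{2k}x$ and leaves no shift. Hence the right-hand side is an explicit rational function of the torus variables.

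\emph{Left-hand side: direct evaluation.} At $u=q^{-2k-1}x^{-1}$ we have $q^{-1}u^{-1}=q^{2k}x$ and $q^{-1}u=q^{-2k-2}x^{-1}$. So
\[
 A_i(q^{-1}u^{-1})=A_i(q^{2k}x),\qquad A_{\tau i}(q^{-1}u)=A_{\tau i}(q^{-2}(q^{2k}x)^{-1}),
\]
which are exactly the shifted numerators above. In $V_i(q^{-1}u^{-1})$ and $V_{\tau i}(q^{-1}u)$, the $r$-th factors are $1-x/(q^{2k}x)$ and $1-x^{-1}/(q^{-2k-2}x^{-1})$. These are precisely the factors removed by the prefactor in the definition of $\underline{\hat\Theta}_{i,r}$, so what remains is $V_{i,r}(q^{2k}x)$ and $V_{\tau i,r}(q^{-2k-2}x^{-1})$, matching the right-hand side.

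\emph{Comparing prefactors.} It remains to match the remaining scalars:
\begin{itemize}
\item[(a)] the normalizations $M_i^{\pm1}$, which cancel since $M_{\tau i}=M_i^{-1}$ and the $M$-factors inside $A_i,A_{\tau i}$ coincide on both sides;
\item[(b)] the factor $p_i(u)$ evaluated at $q^{-2k-1}x^{-1}$, times the $(q-q^{-1})^2$ cancelled by the prefactor, against $(1-q^{2k+1}x^2)^{-e_i}(1-q^{-4k-3}x^{-2})^{-e_{\tau i}}$ and the claimed factor $q^{1-(k+1)c}\frac{u-q^{(2k+1)c}u^{-1}}{u-u^{-1}}$, where $c=c_{i,\tau i}$.
\end{itemize}

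\emph{Case analysis for (b).} I split by $c\in\{0,-1\}$.
\begin{itemize}
\item If $c=0$, then $e_i=e_{\tau i}=0$, $p_i=1$, and the claimed factor is $q$. I expect the extra $q$ to come from reconciling the $q$'s inside $W_i$ and $\hat V$ in $A$ under the two different evaluation points, which needs rechecking.
\item If $c=-1$, exactly one of $e_i,e_{\tau i}$ equals $1$. Then $p_i(u)=(qu)^{e_i}(-qu^{-1})^{e_{\tau i}}(u-u^{-1})^{-1}$, and $(1-qx^2)^{e_i}$ evaluated at $q^{2k}x$ becomes $1-q^{4k+1}x^2$, which relates to $u-q^{-(2k+1)}u^{-1}$ at our point. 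I will check the two subcases $e_i=1$ and $e_{\tau i}=1$ separately, and verify the resulting identity of monomials in $q$ and $u$.
\end{itemize}

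\emph{Expected obstacle.} The main difficulty is this last bookkeeping: the $q$-powers and signs from $p_i$ and from the fixed arrow between $i$ and $\tau i$, where $\hat V_{t(h)}$ contributes a factor vanishing at $s=r$ that must be matched with the excluded $a\neq r$ in Proposition \ref{prop:fmono}. The rest is mechanical substitution.
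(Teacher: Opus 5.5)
Your strategy is the same as the paper's: normal-order $D_{i,r}^kX_{i,r}X_{\tau i,r}D_{\tau i,r}^k$ into a function and compare it with the direct evaluation (the paper multiplies by $1=D_{i,r}^{k+1}D_{\tau i,r}^{k+1}$ and moves $D_{i,r}^{k+1}$ to the left). However, the bookkeeping is the whole content of this lemma, you leave it undone, and the definite claims you make about it are wrong. There is a genuine gap.

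Conjugation $D_{i,r}(\cdot)D_{i,r}^{-1}$ is the substitution $\sigma\colon x_{i,r}\mapsto q^2x_{i,r}$, $x_{i,r}^{1/2}\mapsto qx_{i,r}^{1/2}$. It applies to \emph{every} occurrence of $x_{i,r}$, not only to the explicit argument. In particular it acts on $M_i=\prod_s x_{i,s}^{1/2}$. The two coefficients are also conjugated by different powers: $\sigma^k$ for $X_{i,r}$ and $\sigma^{k+1}$ for $X_{\tau i,r}$. So your item (a) fails. With $x=x_{i,r}$ and $u_k=q^{-2k-1}x^{-1}$, and the right side written with unshifted internal variables as in the lemma,
\[
\sigma^k\bigl[V_{i,r}(x)\bigr]\,\sigma^{k+1}\bigl[V_{\tau i,r}(x^{-1})\bigr]
=q^{-k}\cdot q^{k+1}\,V_{i,r}(q^{-1}u_k^{-1})\,V_{\tau i,r}(q^{-1}u_k).
\]
This $q$ is exactly the factor $q^{1-(k+1)c_{i,\tau i}}=q$ when $c_{i,\tau i}=0$. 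It does not come from $W_i$ or from the $\hat V_{t(h)}$ with $t(h)\notin\{i,\tau i\}$: these contain no $x_{i,r}$ and agree exactly on both sides. If you carry out the computation as you describe it (normalizations cancelling, only arguments shifted), both sides come out equal with no factor $q$. That contradicts the statement rather than proving it.

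For $c_{i,\tau i}=-1$ the same issue arises inside $A_i$ or $A_{\tau i}$, so your claim that the numerators are ``exactly the shifted numerators'' is false there.

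\emph{Case $i\to\tau i$.} Here $A_i$ contains $\hat V_{\tau i}(qz)=M_i^{-1}\prod_a(1-qzx_{i,a})$. Its $a=r$ factor at $z=x$ cancels $(1-qx^2)^{e_i}$ before shifting. Hence the right side carries $q^{-k}M_i^{-1}\prod_{a\ne r}(1-q^{2k+1}xx_{i,a})$. The left side carries $M_i^{-1}(1-q^{2k+1}x^2)\prod_{a\ne r}(1-q^{2k+1}xx_{i,a})$. The surviving factor is therefore $q^k(1-q^{2k+1}x^2)$, not $1-q^{4k+1}x^2$, which cancels. Combining it with the $q$ above and $p_i(u_k)=qu_k/(u_k-u_k^{-1})$ gives
\[
q^{k+2}\,\frac{u_k(1-q^{2k+1}x^2)}{u_k-u_k^{-1}}
=q^{k+2}\,\frac{u_k-q^{-2k-1}u_k^{-1}}{u_k-u_k^{-1}}.
\]

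\emph{Case $\tau i\to i$.} The fixed arrow now puts $\hat V_i(qz)$ into $A_{\tau i}$, and the surviving factor is $q^{-k-1}(1-q^{-2k-1}x^{-2})$. With $p_i(u_k)=-qu_k^{-1}/(u_k-u_k^{-1})$ and $u_k^{-1}=q^{2k+1}x$, one gets the same expression.

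The proof closes once you track all occurrences of $x_{i,r}$ and $x_{i,r}^{1/2}$ this way, including the different shift powers on the two factors. As written, your proposal attributes both the factor $q$ and the rational factor to the wrong terms.
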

\begin{proof}
    We multiply $\underline{\hat{\Theta}}_{i,r}(q^{-2k-1}x_{i,r}^{-1})$ by $1=D_{i,r}^{k+1}D_{\tau i,r}^{k+1}$, and move $D_{i,r}^{k+1}$ to the left to get the desired formula.
\end{proof}

For $k=-1$, both $D_{i,r}^{-1}  X_{i,r}$ and $X_{\tau i,r} D_{\tau i,r}^{-1}$ are scalars. Hence,
\[D_{i,r}^{-1}  X_{i,r} X_{\tau i,r} D_{\tau i,r}^{-1}=X_{\tau i,r} D_{\tau i,r}^{-1}D_{i,r}^{-1}  X_{i,r}=X_{\tau i,r}X_{i,r}.\]
The cases $k=0,-1$ of Lemma~\ref{lem:valUnderlineT} give the following residue formulas. 
\begin{lemma}\label{lem:resT0}
    If $c_{i,\tau i}=0$, the function $\hat{\Theta}_i(u) $ has only possible finite nonzero simple poles at $ q^{\pm 1}x_{i,r}^{-1} (1\leq r\leq v_i) $, and its residues are given by
    \begin{gather*}
        \Res_{u=q^{-1}x_{i,r}^{-1}}\frac{\hat{\Theta}_i(u)}{(q-q^{-1})u}du= X_{i,r} X_{\tau i,r},\quad
        \Res_{u=qx_{i,r}^{-1}}\frac{\hat{\Theta}_i(u)}{(q-q^{-1})u}du=- X_{\tau i,r} X_{i,r}.
    \end{gather*}
\end{lemma}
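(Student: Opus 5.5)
The plan is to read off the pole structure directly from the definition of $\hat\Theta_i(u)$, and then to evaluate both residues through the auxiliary function $\underline{\hat\Theta}_{i,r}(u)$ of Lemma~\ref{lem:valUnderlineT}, specialized to $k=0$ and $k=-1$.

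\emph{Pole structure.} When $c_{i,\tau i}=0$ and $i\notin Q_0^\tau$, the prefactor is $p_i(u)=(qu)^{e_i}(-qu^{-1})^{e_{\tau i}}$. It only contributes poles or zeros at $u=0,\infty$. The numerators $A_i(q^{-1}u^{-1})$ and $A_{\tau i}(q^{-1}u)$ are Laurent polynomials in $u$, so they give no finite nonzero poles either. The denominators are
\[
V_i(q^{-1}u^{-1})=M_i^{-1}\prod_{r}(1-qx_{i,r}u),\qquad V_{\tau i}(q^{-1}u)=M_i\prod_{r}(1-qx_{i,r}^{-1}u^{-1}).
\]
They vanish exactly at $u=q^{-1}x_{i,r}^{-1}$ and $u=qx_{i,r}^{-1}$. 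Since the $x_{i,r}$ are independent coordinates, these $2v_i$ points are pairwise distinct in $K$, so every such pole is simple.

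\emph{Residue at $u=q^{-1}x_{i,r}^{-1}$.} By definition of $\underline{\hat\Theta}_{i,r}$ (note $x_{\tau i,r}=x_{i,r}^{-1}$),
\[
\frac{\hat\Theta_i(u)}{(q-q^{-1})u}=\frac{(q-q^{-1})\,\underline{\hat\Theta}_{i,r}(u)}{u\,(1-qx_{i,r}u)(1-qx_{i,r}^{-1}u^{-1})}.
\]
Here $\underline{\hat\Theta}_{i,r}$ is regular at both points $q^{\mp1}x_{i,r}^{-1}$. Evaluating the remaining factors at $u=q^{-1}x_{i,r}^{-1}$:
\begin{itemize}
\item $\operatorname{Res}\,\frac{1}{u(1-qx_{i,r}u)}=-1$;
\item $1-qx_{i,r}^{-1}u^{-1}=1-q^2$.
\end{itemize}
Hence the residue equals $\frac{q-q^{-1}}{q^2-1}\,\underline{\hat\Theta}_{i,r}(q^{-1}x_{i,r}^{-1})=q^{-1}\underline{\hat\Theta}_{i,r}(q^{-1}x_{i,r}^{-1})$. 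Lemma~\ref{lem:valUnderlineT} with $k=0$ and $c_{i,\tau i}=0$ makes the rational prefactor equal to $q\cdot 1$, so $\underline{\hat\Theta}_{i,r}(q^{-1}x_{i,r}^{-1})=q\,X_{i,r}X_{\tau i,r}$. This gives the first formula.

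\emph{Residue at $u=qx_{i,r}^{-1}$.} Here the local factors are:
\begin{itemize}
\item $\operatorname{Res}\,\frac{1}{u(1-qx_{i,r}^{-1}u^{-1})}=\operatorname{Res}\,\frac{1}{u-qx_{i,r}^{-1}}=1$;
\item $1-qx_{i,r}u=1-q^2$.
\end{itemize}
So the residue is $-q^{-1}\underline{\hat\Theta}_{i,r}(qx_{i,r}^{-1})$. Lemma~\ref{lem:valUnderlineT} with $k=-1$ gives $q\,D_{i,r}^{-1}X_{i,r}X_{\tau i,r}D_{\tau i,r}^{-1}$. By the remark preceding the statement, $D_{i,r}^{-1}X_{i,r}$ and $X_{\tau i,r}D_{\tau i,r}^{-1}$ are scalars, so this rewrites as $q\,X_{\tau i,r}X_{i,r}$. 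The residue is therefore $-X_{\tau i,r}X_{i,r}$.

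The only delicate point is the bookkeeping of signs and powers of $q$ in the local factors and in the prefactor of Lemma~\ref{lem:valUnderlineT}. In particular, one must check that the residue is taken of a scalar rational function with no shift operators, so no reordering issue arises before Lemma~\ref{lem:valUnderlineT} is applied.
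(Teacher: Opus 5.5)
Your proposal is correct and takes the same route as the paper, which obtains these residues as the cases $k=0$ and $k=-1$ of Lemma~\ref{lem:valUnderlineT}, using the preceding remark that $D_{i,r}^{-1}X_{i,r}$ and $X_{\tau i,r}D_{\tau i,r}^{-1}$ are scalars to rewrite the $k=-1$ value as $X_{\tau i,r}X_{i,r}$. Your local factors $-1/(1-q^2)$ and $1/(1-q^2)$, combined with the prefactor $q$ from that lemma, give exactly the stated signs and normalizations.
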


\begin{lemma}\label{lem:resT}
    For $c_{i,\tau i}=-1$, the function $ (u-u^{-1})\hat{\Theta}_i(u) $ has only possible finite nonzero simple poles at $ q^{\pm 1}x_{i,r}^{-1} (1\leq r\leq v_i) $, and its residues are given by
    \begin{gather*}
        \Res_{u=q^{-1}x_{i,r}^{-1}}\frac{(u-u^{-1})\hat{\Theta}_i(u)}{(q^2-1)u}du=(q^{-1}x_{i,r}^{-1}-x_{i,r}) X_{i,r} X_{\tau i,r}, \\
        \Res_{u=qx_{i,r}^{-1}}\frac{(u-u^{-1})\hat{\Theta}_i(u)}{(q^2-1)u}du=(q^{-1}x_{i,r}-x_{i,r}^{-1}) X_{\tau i,r} X_{i,r}.
    \end{gather*}
\end{lemma}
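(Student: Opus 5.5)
The plan is to mirror the proof of Lemma~\ref{lem:resT0}, using Lemma~\ref{lem:valUnderlineT} at $k=0$ and $k=-1$ to evaluate the residues.

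\emph{Locating the poles.} I will first write out the rational function explicitly:
\[
 (u-u^{-1})\hat\Theta_i(u)=(q-q^{-1})^2\,(u-u^{-1})p_i(u)\,
 \frac{A_i(q^{-1}u^{-1})A_{\tau i}(q^{-1}u)}{V_i(q^{-1}u^{-1})V_{\tau i}(q^{-1}u)} .
\]
Since $c_{i,\tau i}=-1$, $e_i+e_{\tau i}=1$. The factor $p_i(u)=(qu)^{e_i}(-qu^{-1})^{e_{\tau i}}(u-u^{-1})^{-1}$ has its poles at $u=\pm1$ cancelled by the prefactor $(u-u^{-1})$. It therefore contributes only monomials in $u$, which matter only at $0$ and $\infty$.

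The $A$-factors are Laurent polynomials in $u$, so they contribute no finite nonzero poles.

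For the denominators, use $M_{\tau i}=M_i^{-1}$ and $x_{\tau i,r}=x_{i,r}^{-1}$. This gives
\[
V_i(q^{-1}u^{-1})=M_i^{-1}\prod_r(1-qx_{i,r}u),\qquad
V_{\tau i}(q^{-1}u)=M_i\prod_r\bigl(1-q x_{i,r}^{-1}u^{-1}\bigr).
\]
Their zeros are exactly $u=q^{-1}x_{i,r}^{-1}$ and $u=qx_{i,r}^{-1}$. These are simple for generic coordinates, which is all we need in the localized ring.

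\emph{Residue at $a=q^{-1}x_{i,r}^{-1}$.} Using the definition of $\underline{\hat\Theta}_{i,r}$, rewrite
\[
\frac{(u-u^{-1})\hat\Theta_i(u)}{(q^2-1)u}=\frac{(q-q^{-1})^2(u-u^{-1})}{(q^2-1)\,u\,(1-qx_{i,r}u)(1-qx_{i,r}^{-1}u^{-1})}\underline{\hat\Theta}_{i,r}(u).
\]
Since $1-qx_{i,r}u=-qx_{i,r}(u-a)$, the residue equals
\[
\frac{(q-q^{-1})^2(a-a^{-1})}{(q^2-1)\,a\,(-qx_{i,r})(1-q^2)}\,\underline{\hat\Theta}_{i,r}(a).
\]
Now apply Lemma~\ref{lem:valUnderlineT} with $k=0$. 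It gives $\underline{\hat\Theta}_{i,r}(a)=q^{2}\frac{a-q^{-1}a^{-1}}{a-a^{-1}}X_{i,r}X_{\tau i,r}$. The factor $a-a^{-1}$ cancels, and what remains is to simplify the scalar prefactor. With $a=q^{-1}x_{i,r}^{-1}$ we have $a-q^{-1}a^{-1}=q^{-1}x_{i,r}^{-1}-x_{i,r}$. The claim is that the other factors collapse to $1$, giving $(q^{-1}x_{i,r}^{-1}-x_{i,r})X_{i,r}X_{\tau i,r}$.

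\emph{Residue at $b=qx_{i,r}^{-1}$.} This case is identical except that the vanishing factor is $1-qx_{i,r}^{-1}u^{-1}$. We then use Lemma~\ref{lem:valUnderlineT} with $k=-1$, where the prefactor is $q\,\frac{b-qb^{-1}}{b-b^{-1}}$. We also use the remark following that lemma, that $D_{i,r}^{-1}X_{i,r}X_{\tau i,r}D_{\tau i,r}^{-1}=X_{\tau i,r}X_{i,r}$. The factor $b-qb^{-1}=q(x_{i,r}^{-1}-x_{i,r})$ then produces $(q^{-1}x_{i,r}-x_{i,r}^{-1})$ after the constants are simplified.

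\emph{Main obstacle.} The only delicate step is bookkeeping of the scalar constants: the signs from $e_i$ versus $e_{\tau i}$ in $p_i$, the $M_i$ factors, and the powers of $q$ coming from Lemma~\ref{lem:valUnderlineT}. I would carry this out by evaluating each factor at the pole separately, to confirm that everything except the stated linear factor cancels.
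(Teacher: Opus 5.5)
Your proposal is correct and is the paper's argument: the poles come only from the two $V$-denominators, and the residues follow from Lemma~\ref{lem:valUnderlineT} at $k=0$ and $k=-1$. At $a$ the constants do collapse to $1$, since $(q-q^{-1})^2q^2=(q^2-1)^2=(q^2-1)\cdot a\cdot(-qx_{i,r})\cdot(1-q^2)$.

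There is one arithmetic slip at the second pole. There $b-qb^{-1}=qx_{i,r}^{-1}-x_{i,r}$, not $q(x_{i,r}^{-1}-x_{i,r})$. Your value would give $x_{i,r}-x_{i,r}^{-1}$, which is not the stated residue. With the correct value, the prefactor $\frac{(q-q^{-1})^2q}{(q^2-1)(1-q^2)}=-q^{-1}$ yields exactly $(q^{-1}x_{i,r}-x_{i,r}^{-1})X_{\tau i,r}X_{i,r}$.
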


We now verify the relations in Definition~\ref{def:iquant}. Relation~\eqref{rel:HH} is obvious. We first consider the relations involving a nonfixed current $\hat{B}_i(u)$.
Relation~\eqref{rel:HB}, when $j$ is nonfixed, follows from Lemma~\ref{lem:commBT} and the identities
\begin{align*}
    \delta(qx_{j,s}v)D_{i,j,s}(u)^+ &= \delta(qx_{j,s}v)\left( C_{i,j}(u,v)^{-1}C_{\tau i,j}(u,v^{-1}) \right) ^+,\\
    C_{\tau i,j}(u^{-1},v) &= C_{\tau i,j}(u,v^{-1})^{-1}.
\end{align*}
Here the delta substitution is justified coefficient-wise by Lemma~\ref{lem:coefficientwise-delta}.
The relation~\eqref{rel:BB4} with at least one nonfixed vertex follows directly from Lemma~\ref{lem:commBB}.
For relation~\eqref{rel:BB5}, first suppose $j\neq i$ and $j\neq\tau i$. Lemma~\ref{lem:commBB} applies to every pair $(r,s)$, and the identity
\begin{equation*}
    (q^{-1}x_{j,s}^{-1}-q^{c_{ij}-1}x_{i,r}^{-1})C_{i,j}(x_{i,r},x_{j,s})=q^{c_{ij}-1}x_{j,s}^{-1}-q^{-1}x_{i,r}^{-1}
\end{equation*}
gives $(v-q^{c_{ij}}u)\hat{B}_j(v)\hat{B}_i(u)=(q^{c_{ij}}v-u)\hat{B}_i(u)\hat{B}_j(v)$. For nonfixed $i=j$, the same calculation applies to $r\neq s$. The diagonal terms vanish separately: after moving the first shift operator,
\[
 (u-q^2v)\delta(qx_{i,r}u)\delta(q^3x_{i,r}v)=0,
\]
and the reversed order is identical with $u,v$ exchanged. Thus
\eqref{rel:BB5} also holds for nonfixed $i=j$.

\subsection{Relation~\eqref{rel:BB1}}
For $c_{i,\tau i}=0$, we must prove
\begin{equation*}
    \hat{B}_i(u)\hat{B}_{\tau i}(v)-\hat{B}_{\tau i}(v)\hat{B}_i(u) = \frac{\delta(uv)}{q-q^{-1}}(\hat{\Theta}_i^+(u)-\hat{\Theta}_{\tau i}^+(v)).
\end{equation*}

By Lemma~\ref{lem:commBB}, for all $ r\neq s $, the terms $  X_{i,r} X_{\tau i,s} $ and $  X_{\tau i,s} X_{i,r} $ on the left-hand side cancel. Hence
\begin{align*}
    LHS 
    &= \sum_{r=1}^{v_i}\delta(qx_{i,r}u)\delta(uv) X_{i,r} X_{\tau i,r}-\sum_{r=1}^{v_i}\delta(q^{-1}x_{i,r}u)\delta(uv) X_{\tau i,r} X_{i,r}.
\end{align*}

As rational functions, $ \hat{\Theta}_i(u)=\hat{\Theta}_{\tau i}(u^{-1}) $. Their prescribed Laurent expansions are taken at $u=\infty$ and $u=0$, respectively.
By Lemma~\ref{lem:res},
\begin{align*}
    RHS 
    &= \delta(uv)\sum_k\delta(a_k^{-1}u)\Res_{u=a_k}\frac{\hat{\Theta}_i(u)}{(q-q^{-1})u}du.
\end{align*}
Applying Lemma~\ref{lem:resT0}, we get $ LHS=RHS $.

\subsection{Relation~\eqref{rel:BB3}}
For $c_{i,\tau i}=-1$, we must prove
\begin{equation*}
    (u-q^{-1}v)\hat{B}_i(u)\hat{B}_{\tau i}(v)+(v-q^{-1}u)\hat{B}_{\tau i}(v)\hat{B}_i(u)=\frac{\delta(uv)(u-v)}{q^2-1}(\hat{\Theta}_i^+(u)-\hat{\Theta}_{\tau i}^+(v)).
\end{equation*}

By the same reasoning, for all $ r\neq s $, the terms $  X_{i,r} X_{\tau i,s} $ and $  X_{\tau i,s} X_{i,r} $ on the left-hand side cancel. Hence
\begin{align*}
    LHS 
    &= \sum_{r=1}^{v_i}(q^{-1}x_{i,r}^{-1}-x_{i,r})\delta(qx_{i,r}u)\delta(uv) X_{i,r} X_{\tau i,r} \\
    &\mspace{24mu} +\sum_{r=1}^{v_i}(q^{-1}x_{i,r}-x_{i,r}^{-1})\delta(q^{-1}x_{i,r}u)\delta(uv) X_{\tau i,r} X_{i,r}.
\end{align*}

By Lemma~\ref{lem:res},
\begin{align*}
    RHS= \delta(uv)\sum_k\delta(a_k^{-1}u)\Res_{u=a_k}\frac{(u-u^{-1})\hat{\Theta}_i(u)}{(q^2-1)u}du.
\end{align*}
Applying Lemma~\ref{lem:resT}, we get $ LHS=RHS $.

\subsection{The Serre relation~\eqref{rel:Serre1}}\label{sec:serre1}

Assume $i\notin Q_0^\tau$, $j\neq\tau i$, and $c_{ij}=-1$. Write $\hat{\mathbb{S}}_{i,j}$ for the left-hand side of
\eqref{rel:Serre1} evaluated at $B_a=\hat{B}_a$. Set
\begin{equation*}
    P(u_1,u_2,v)=(u_1-qv)(u_2-qv),\qquad L_i(u_1,u_2)=\Sym_{u_1,u_2}\bigl( (u_1-q^2u_2)\hat{B}_i(u_1)\hat{B}_i(u_2) \bigr).
\end{equation*}

The quadratic relation~\eqref{rel:BB5} gives
\begin{equation*}
    (u-qv)\hat{B}_j(v)\hat{B}_i(u)=(qu-v)\hat{B}_i(u)\hat{B}_j(v).
\end{equation*}
Consequently,
\begin{align*}
    & P\cdot\hat{\mathbb{S}}_{i,j}(u_1,u_2\mid v) \\
    ={}& \Sym_{u_1,u_2}(P\cdot\hat{B}_i(u_1)\hat{B}_i(u_2)\hat{B}_j(v)-P\cdot[2]\hat{B}_i(u_1)\hat{B}_j(v)\hat{B}_i(u_2)+P\cdot\hat{B}_j(v)\hat{B}_i(u_1)\hat{B}_i(u_2)) \\
    ={}& \Sym_{u_1,u_2}\big((P-[2](u_1-qv)(qu_2-v)+(qu_1-v)(qu_2-v))\cdot\hat{B}_i(u_1)\hat{B}_i(u_2)\hat{B}_j(v)\big) \\
    ={}& \Sym_{u_1,u_2}\big((q^{-1}-q)(u_1-q^2u_2)v\cdot\hat{B}_i(u_1)\hat{B}_i(u_2)\hat{B}_j(v)\big) \\
    ={}& (q^{-1}-q)L_i(u_1,u_2)v\hat{B}_j(v).
\end{align*}
This calculation uses only the mixed quadratic relation and therefore also applies when $i$ is fixed. For nonfixed $i$, relation~\eqref{rel:BB5} with $j=i$ gives $L_i(u_1,u_2)=0$. Hence
\begin{equation*}
    P\cdot\hat{\mathbb{S}}_{i,j}(u_1,u_2\mid v)=0.
\end{equation*}

After normal ordering, $\hat{\mathbb{S}}_{i,j}(u_1,u_2\mid v)$ is a finite delta sum. If $j$ is nonfixed or orthogonal, since normal ordering preserves the odd $q$-exponents of its support values, $u_a\neq qv$ on every delta support. If $j$ is symplectic, physical summands are still handled by the same parity argument; for a scalar summand $v=\epsilon$, the factors $u_a-q\epsilon$ are nonzero by coordinate independence. Thus $P$ is nonzero on every support, and Lemma~\ref{lem:delta-cancellation} proves \eqref{rel:Serre1}.

\subsection{The Serre relation~\eqref{rel:Serre3}}

Assume $c_{i,\tau i}=-1$ and retain the notation $P$ and $L_i$ from Section~\ref{sec:serre1}. Let $R_{i,\tau i}^{(3)}$ denote the right-hand side of \eqref{rel:Serre3}, with the prescribed expansions. Set
\begin{equation*}
    E_i(u,v)=\frac{\delta(uv)(u-v)}{q^2-1}\bigl(\hat{\Theta}_i^+(u)-\hat{\Theta}_i^-(u)\bigr).
\end{equation*}
Since $\hat{\Theta}_i(u)=\hat{\Theta}_{\tau i}(u^{-1})$, relation~\eqref{rel:BB3} becomes
\begin{equation*}
    (u-qv)\hat{B}_{\tau i}(v)\hat{B}_i(u)=(qu-v)\hat{B}_i(u)\hat{B}_{\tau i}(v)-qE_i(u,v).
\end{equation*}
Reordering as in Section~\ref{sec:serre1} and using $L_i(u_1,u_2)=0$, we obtain
\begin{align*}
    P\cdot\hat{\mathbb{S}}_{i,\tau i}(u_1,u_2\mid v)&=\Sym_{u_1,u_2}\Bigl( 0+[2](u_1-qv)\hat{B}_i(u_1)qE_i(u_2,v) \\
    &\mspace{24mu}-(u_2-qv)qE_i(u_1,v)\hat{B}_i(u_2)-(qu_1-v)\hat{B}_i(u_1)qE_i(u_2,v)
    \Bigr) \\
    &=\Sym_{u_1,u_2}\Bigl( (u_1-q^3v)\hat{B}_i(u_1)E_i(u_2,v)-q(u_1-qv)E_i(u_2,v)\hat{B}_i(u_1)
    \Bigr) .
\end{align*}

Lemma~\ref{lem:commBT} gives
\begin{equation*}
    \hat{B}_i(u_1)\hat{\Theta}_i^\pm(u_2)=D_{u_1}^\pm(u_2)\hat{\Theta}_i^\pm(u_2)\hat{B}_i(u_1),
\end{equation*}
where
\begin{equation*}
    D_v(u)=\frac{u-q^2v}{q^2u-v}\frac{u^{-1}-q^{-1}v}{q^{-1}u^{-1}-v}.
\end{equation*}

A direct calculation gives the rational identity
\begin{equation*}
    \frac{u-u^{-1}}{q^2-1}
    \Bigl((w-q^3u^{-1})D_w(u)-q(w-qu^{-1})\Bigr)
    \hat{\Theta}_i(u)
    =P(w,u,u^{-1})\mathcal H_{i,w}^{(3)}(u).
\end{equation*}
Applying the two exchange identities and expanding this identity
in the respective directions yields
\begin{align*}
    P\cdot\hat{\mathbb{S}}_{i,\tau i}
    &=P\cdot \Sym_{u_1,u_2}\delta(u_2v)
    \Bigl(
        \mathcal H_{i,u_1}^{(3),+}(u_2)
        -\mathcal H_{i,u_1}^{(3),-}(u_2)
    \Bigr)\hat{B}_i(u_1)\\
    &=P\cdot R_{i,\tau i}^{(3)}.
\end{align*}

It remains to justify cancellation of $P$. The delta supports in left-hand side still have odd $q$-exponents. The delta supports in right hand side are of the form $\delta(qx_{i,r}u_1)\delta(a/u_2)\delta(av)$ or with $u_1,u_2$ switched, where $a$ is a nonzero finite pole of $\mathcal H_{i,q^{-1}x_{i,r}^{-1}}^{(3)}(u)$. Since $c_{i,\tau i}=-1$, the numerator contains a factor $u-x_{i,r}$ coming from either $\hat V_{\tau i}(u^{-1})$ or $\hat V_i(u)$. Thus $u=x_{i,r}$ is not a pole. By Lemma~\ref{lem:resT}, the remaining finite nonzero poles
are simple and are contained in
\begin{equation*}
    \{q^{-1}x_{i,s}^{-1},qx_{i,s}^{-1}\mid 1\le s\le v_i\}
    \cup\{q^{-3}x_{i,r}^{-1}\}.
\end{equation*}
They also have odd $q$-exponents. The parity argument of Section~\ref{sec:serre1} shows that $P$ is nonzero on every support of both sides. Lemma~\ref{lem:delta-cancellation} proves \eqref{rel:Serre3}.

\section{Orthogonal fixed vertices}\label{sec:appen2}

\subsection{Cartan and mixed exchange relations}
The Cartan currents commute because their coefficients are rational
functions. For a fixed vertex $j$ and a physical index $s$, direct
computation gives
\[
 X_{j,s}\hat{\Theta}_a(u)=
 \frac{u-q^{c_{aj}-1}x_{j,s}^{-1}}{q^{c_{aj}}u-q^{-1}x_{j,s}^{-1}}
 \frac{q^{c_{\tau a,j}}u-qx_{j,s}}
 {u-q^{c_{\tau a,j}+1}x_{j,s}}
 \hat{\Theta}_a(u)X_{j,s}.
\]
This is the same identity as in Lemma~\ref{lem:commBT}. Expanding at $u=\infty$ and applying Lemma~\ref{lem:coefficientwise-delta} to each current summand gives \eqref{rel:HB}. At a symplectic vertex, the coefficient
$X_{j,\epsilon}$ commutes with $\hat\Theta_a(u)$. The rational
factor in \eqref{rel:HB} is $1$ at $v=\epsilon$, because
$c_{aj}=c_{\tau a,j}$, so these terms satisfy the same relation.

For distinct vertices $a,j$ with $j\ne\tau a$, the summand exchange is
\[
 X_{j,s}X_{a,r}=C_{a,j}(x_{a,r},x_{j,s})X_{a,r}X_{j,s}
\]
whenever one of them is nonfixed. This includes the scalar indices of a
symplectic fixed vertex. Clearing the supportwise denominator proves
\eqref{rel:BB5}; for $c_{aj}=0$ it proves \eqref{rel:BB4}.
Two distinct fixed vertices are nonadjacent. Their coefficients may
share neighbor variables, but their shifts act on distinct fixed
factors, so their currents commute. Together with Appendix~\ref{sec:appen1},
this proves all Cartan and mixed quadratic relations.

The argument of Section~\ref{sec:serre1} also proves \eqref{rel:Serre1} for an orthogonal neighbor, since the mixed quadratic relation has now been verified and all delta support values still have odd $q$-exponents.

\subsection{The fixed-vertex quadratic relation~\eqref{rel:BB2}}

Let $r'=v_i+1-r$ and $\mathcal{I}_i=\{1\leq r\leq v_i\mid r\ne r'\}$. Write $x_r:=x_{i,r}$ and $D_r:=D_{i,r}$; the central weight $x_{i,(v_i+1)/2}=1$ is excluded when $v_i$ is odd. As in Appendix \ref{sec:appen1}, denote the coefficient of each delta summand by $ X_r$, and set
\begin{equation*}
    A(z):=W_i(z)\prod_{\substack{h\in Q_1\\s(h)=i}}\hat{V}_{t(h)}(qz),\qquad
    X_r:=\frac{(1-x_r^{-2})A(x_r)}{V_{i,r}(x_r)}D_r\quad(r\in\mathcal{I}_i).
\end{equation*}
Since $i\in Q_0^\tau$, we have $M_i=1$.
Moreover, $x_{r'}=x_r^{-1}$, so the $r'$-factor in
$V_{i,r}(x_r)$ is $1-x_r^{-2}$. Cancelling this factor gives
\[
V_i(z)=\prod_{s=1}^{v_i}(1-x_s/z),
\qquad
X_r=
\frac{A(x_r)}
     {\prod_{\substack{1\leq s\leq v_i\\s\ne r,r'}}
      (1-x_s/x_r)}
D_r
\quad(r\in\mathcal I_i).
\]
The fixed-vertex series in the orthogonal case are
\begin{align}
    \hat{B}_i(u) &= \sum_{r\in\mathcal{I}_i}\delta(qx_ru) X_r, \\
    \hat{\Theta}_i(u) &= -q(q-q^{-1})^2(1-q^2u^2)(1-q^2u^{-2}) \frac{A(q^{-1}u^{-1})A(q^{-1}u)}{V_i(q^{-1}u^{-1})V_i(q^{-1}u)}. \label{eq:Thetan-O-short}
\end{align}
In particular, $\hat{\Theta}_i(u)=\hat{\Theta}_i(u^{-1})$ as a rational function. For $s\notin\{r,r'\}$, moving the difference operators past the coefficients gives
\begin{equation*}
    X_s X_r=\frac{q^2x_r-x_s}{x_r-q^2x_s} X_r X_s=C_{i,i}(x_r,x_s) X_r X_s.
\end{equation*}

For $s\notin\{r,r'\}$, the coefficient of $\delta(qx_ru)\delta(qx_sv) X_r X_s$ in the LHS of \eqref{rel:BB2} is
\begin{equation*}
    (q^{-1}x_r^{-1}-qx_s^{-1})+(q^{-1}x_s^{-1}-qx_r^{-1})\frac{q^2x_r-x_s}{x_r-q^2x_s}=0.
\end{equation*}
For $r=s$, the corresponding term vanishes because
\begin{equation*}
    (u-q^2v)\delta(qx_ru)\delta(q^3x_rv)=0.
\end{equation*}
For opposite indices, moving the shift through the delta function gives
\begin{equation*}
    X_r\delta(qx_{r'}v)=\delta(q^{-1}x_r^{-1}v) X_r.
\end{equation*}
These terms are supported on $uv=1$. Thus
\begin{align}
    LHS &= \sum_{r\in\mathcal{I}_i}\Big[ (u-q^2v)\delta(qx_ru)\delta(q^{-1}x_r^{-1}v) + (v-q^2u)\delta(qx_rv)\delta(q^{-1}x_r^{-1}u) \Big]  X_r X_{r'} \notag\\
    &= \delta(uv)\sum_{r\in\mathcal{I}_i}(q^{-1}x_r^{-1}-q^3x_r)\bigl( \delta(qx_ru)+\delta(q^{-1}x_r^{-1}u) \bigr)  X_r X_{r'}. \label{eq:BBn-O-LHS}
\end{align}

Set
\begin{equation*}
    \Gamma(u) := \frac{u-u^{-1}}{q^{-1}-q}\hat{\Theta}_i(u).
\end{equation*}
Its only possible finite nonzero poles are the simple poles $q^{-1}x_r^{-1},qx_r$ for $r\in\mathcal{I}_i$. In odd dimension, the factors from the middle weight $x_{(v_i+1)/2}=1$ in each denominator of \eqref{eq:Thetan-O-short} are cancelled by $(1-q^2u^2)(1-q^2u^{-2})$; they introduce no extra pole. Substitution at $u=q^{-1}x_r^{-1}$ gives
\begin{equation*}
    \Res_{u=q^{-1}x_r^{-1}}\frac{\Gamma(u)}{u}du = \frac{1-q^4x_r^2}{qx_r}\frac{A(x_r)A(q^{-2}x_r^{-1})}{\displaystyle\prod_{s\ne r,r'}(1-x_s/x_r)(1-q^2x_sx_r)} = (q^{-1}x_r^{-1}-q^3x_r) X_r X_{r'}.
\end{equation*}
The residue at $qx_r$ is the same, by the inversion symmetry of $\hat{\Theta}_i$. Hence Lemma \ref{lem:res} and \eqref{eq:BBn-O-LHS} yield
\begin{equation*}
    LHS = \delta(uv)\bigl( \Gamma(u)^+ - \Gamma(u)^- \bigr) = \frac{\delta(uv)(u-v)}{q^{-1}-q}\bigl( \hat{\Theta}_i^+(u)-\hat{\Theta}_i^+(v) \bigr) .
\end{equation*}
This is \eqref{rel:BB2}.

\subsection{The fixed-vertex Serre relation~\eqref{rel:Serre2}}

Let $j$ satisfy $c_{ij}=-1$. Since the fixed vertices are pairwise nonadjacent, $j$ is nonfixed. Write $\hat{\mathbb{S}}_{i,j}$ for the left-hand side of \eqref{rel:Serre2} evaluated at $B_a=\hat{B}_a$.

Retain the notation $P$, $L_i$, and $\hat{\mathbb{S}}_{i,j}$ from Section~\ref{sec:serre1}, and let $R_{i,j}^{(2)}$ denote the right-hand side of \eqref{rel:Serre2}, with the prescribed expansions.

Write $y_s=x_{j,s}$, $Y_s=X_{j,s}$, $a_s=q^{-1}y_s^{-1}$.
By Lemma~\ref{lem:coefficientwise-delta} and Remark~\ref{rem:expansions},
\begin{equation*}
    R_{i,j}^{(2)}=\sum_s\delta(qy_sv)\delta(u_1u_2)\bigl(\mathcal H_{i,a_s}^{(2),+}(u_1)-\mathcal H_{i,a_s}^{(2),-}(u_1)\bigr)Y_s.
\end{equation*}

Since
\begin{equation*}
    P(u,u^{-1},q^{-1}y_s^{-1})\mathcal H_{i,a_s}^{(2)}(u)=(u-u^{-1})q^{-1}y_s^{-1}\hat{\Theta}_i(u),
\end{equation*}
the fixed-vertex quadratic relation \eqref{rel:BB2} and the calculation in Section~\ref{sec:serre1} give
\begin{align*}
    P\cdot R_{i,j}^{(2)}&=\sum_s\delta(qy_sv)\delta(u_1u_2)\bigl(((u_1-u_1^{-1})\hat{\Theta}_i(u_1))^+-((u_1-u_1^{-1})\hat{\Theta}_i(u_1))^-\bigr)q^{-1}y_s^{-1}Y_s \\
    &=\delta(u_1u_2)(u_1-u_2)\bigl(\hat{\Theta}_i^+(u_1)-\hat{\Theta}_i^-(u_1)\bigr)v\sum_s\delta(qy_sv)Y_s \\
    &=(q^{-1}-q)L_i(u_1,u_2)v\hat{B}_j(v) \\
    &=P\cdot\hat{\mathbb{S}}_{i,j}.
\end{align*}

Since $c_{ij}=-1$, either $i\to j$ or $j\to i$; the latter is equivalent to $i\to\tau j$. In either case, the factors $(u-y_s^{-1})(u^{-1}-y_s^{-1})$ in the denominator of $h_{a_{s}}^{(2)}(u)$ cancel with numerator factors in $A(q^{-1}u^{-1})A(q^{-1}u)$. Thus $\mathcal H_{i,a_s}^{(2),+}-\mathcal H_{i,a_s}^{(2),-}$ has no support points beyond those of $\Gamma^+-\Gamma^-$. The parity argument in Section~\ref{sec:serre1} therefore shows that $P$ is nonzero on every delta support of both sides. Lemma~\ref{lem:delta-cancellation} proves \eqref{rel:Serre2}.

\section{Symplectic fixed vertices}\label{sec:appen3}
Fix $i\in Q_S$. The common Cartan and mixed relations were checked
in Appendix~\ref{sec:appen2}, and the ordinary Serre relation with
nonfixed repeated vertex in Appendix~\ref{sec:appen1}. We verify
\eqref{rel:BB2} and \eqref{rel:Serre2}, keeping both scalar indices
of \eqref{eq:Bsymplectic} throughout.

\subsection{The fixed-vertex quadratic relation \eqref{rel:BB2}}
We need to check
\begin{equation}\label{eq:BBnC}
    (u-q^2v)\hat{B}_i(u)\hat{B}_i(v)+(v-q^2u)\hat{B}_i(v)\hat{B}_i(u)
    =\frac{\delta(uv)(u-v)}{q^{-1}-q}
    \big(\hat{\Theta}_i^+(u)-\hat{\Theta}_i^+(v)\big).
\end{equation}

Recall $\mathcal{J}_i:=\{1,2,\dots,v_i\}\sqcup\{+,-\}$. For $1\leq r\leq v_i$, write $x_r:=x_{i,r}$ and $D_r:=D_{i,r}$; for $\epsilon\in\{+,-\}$, put $x_\epsilon:=\epsilon q^{-1}$. Let
\begin{equation*}
    A(z):=W_i(z)\prod_{\substack{h\in Q_1\\s(h)=i}}\hat{V}_{t(h)}(qz).
\end{equation*}
Recall $r':=1+v_i-r$, $x_{r'}=x_r^{-1}$ and $D_{r'}=D_r^{-1}$ for $1\leq r\leq v_i$.

Define
\begin{equation*}
    X_r:=\frac{A(x_r)}{(1-q^{-2}x_r^{-2})V_{i,r}(x_r)}D_r,
    \qquad
    X_\epsilon:=\frac{1}{2}\frac{A(\epsilon q^{-1})}
    {V_i(\epsilon q^{-1})},\quad\epsilon\in\{+,-\}.
\end{equation*}
Here $+$ and $-$ are the two extra index symbols; in scalar expressions they have values $+1$ and $-1$, respectively. Then
\begin{align*}
    \hat{B}_i(u)&=\sum_{r\in \mathcal{J}_i}\delta(qx_ru)X_r, \\
    \hat\Theta_i(u)&=\frac{q(q-q^{-1})^2}{(u-u^{-1})^2} \frac{A(q^{-1}u^{-1})A(q^{-1}u)}{V_i(q^{-1}u^{-1})V_i(q^{-1}u)}.
\end{align*}

By a direct check, we get that for $s\notin\{r,r'\}$, 
\begin{equation}
    X_sX_r=\frac{q^2x_r-x_s}{x_r-q^2x_s}X_rX_s.
\end{equation}
The same identity holds if $s$ is replaced by $\epsilon\in\{+,-\}$.  It also holds for the pair
$+,-$.  

The cancellation of distinct, non-opposite physical indices and the vanishing of repeated physical indices are the calculations preceding \eqref{eq:BBn-O-LHS}. The mixed terms involving $X_r$ and $X_\epsilon$, as well as the terms involving $X_+$ and $X_-$, cancel in the same way.
Only the pairs $r,r'$ and the squares of $X_\pm$ remain.
Consequently, the left-hand side of \eqref{eq:BBnC} is
\begin{align}
 &\sum_{\epsilon\in\{+,-\}}2\epsilon(1-q^2)
   \delta(\epsilon u)\delta(\epsilon v)X_\epsilon^2\notag\\
 &\quad+\sum_{r=1}^{v_i}\Big(
 (u-q^2v)\delta(qx_ru)\delta(q^{-1}x_r^{-1}v)
 +(v-q^2u)\delta(qx_rv)\delta(q^{-1}x_r^{-1}u)
 \Big)X_rX_{r'}.
 \label{eq:BBnC-LHS}
\end{align}

It remains to identify \eqref{eq:BBnC-LHS} with the right-hand side.
Let
\[
 \Gamma(u):=\frac{u-u^{-1}}{q^{-1}-q}\hat\Theta_i(u).
\]
As a rational function, $\hat\Theta_i(u)=\hat\Theta_i(u^{-1})$.
The finite nonzero poles of $\Gamma(u)$ are simple and are contained in
\[
 \{1,-1\}\cup
 \{q^{-1}x_r^{-1},qx_r\mid 1\leq r\leq v_i\}.
\]
A direct computation gives
\begin{align*}
 \Res_{u=\epsilon}\frac{\Gamma(u)}u du
 &=2\epsilon(1-q^2)X_\epsilon^2,
 \qquad \epsilon\in\{+,-\},\\
 \Res_{u=q^{-1}x_r^{-1}}\frac{\Gamma(u)}u du
 &=\Res_{u=qx_r}\frac{\Gamma(u)}u\,du=(q^{-1}x_r^{-1}-q^3x_r)X_rX_{r'}.
\end{align*}
Applying the formal-distribution identity of Lemma \ref{lem:res},
we obtain
\begin{align*}
 \delta(uv)&\bigl(\Gamma(u)^+-\Gamma(u)^-\bigr)
 =\sum_{\epsilon\in\{+,-\}}2\epsilon(1-q^2)
 \delta(\epsilon u)\delta(\epsilon v)X_\epsilon^2 \notag\\
 &+\sum_{r=1}^{v_i}\Big[
 (u-q^2v)\delta(qx_ru)\delta(q^{-1}x_r^{-1}v)+(v-q^2u)\delta(qx_rv)
 \delta(q^{-1}x_r^{-1}u)\Big]X_rX_{r'}.
\end{align*}
The right-hand side is exactly \eqref{eq:BBnC-LHS}.
On the other hand, since
$\hat\Theta_i(u)=\hat\Theta_i(u^{-1})$,
\begin{equation*}
    \delta(uv)\bigl(\Gamma(u)^+-\Gamma(u)^-\bigr)
 =\frac{\delta(uv)(u-v)}{q^{-1}-q}
 \bigl(\hat\Theta_i^+(u)-\hat\Theta_i^+(v)\bigr).
\end{equation*}
This proves \eqref{eq:BBnC}.

\subsection{The fixed-vertex Serre relation \eqref{rel:Serre2}}

Let $j$ satisfy $c_{ij}=-1$; it is nonfixed. Use the notation of the corresponding proof in Appendix~\ref{sec:appen2}. The mixed quadratic relation, including both scalar summands of $\hat B_i$, and \eqref{rel:BB2} give the same identity
\begin{equation*}
    P\cdot\hat{\mathbb{S}}_{i,j}=P\cdot R_{i,j}^{(2)}.
\end{equation*}

The cancellation argument in Appendix~\ref{sec:appen2} also applies: the additional supports $\pm1$ are covered by coordinate independence. Thus \eqref{rel:Serre2} follows.

\bibliographystyle{alpha}
\bibliography{iquantumCoulomb.bib}

@article{BFN18,
    AUTHOR = {Braverman, A. and Finkelberg, M. and Nakajima,
              H.},
     TITLE = {Towards a mathematical definition of {C}oulomb branches of
              3-dimensional {$\mathcal{N}=4$} gauge theories, {II}},
   JOURNAL = {Adv. Theor. Math. Phys.},
  FJOURNAL = {Advances in Theoretical and Mathematical Physics},
    VOLUME = {22},
      YEAR = {2018},
    NUMBER = {5},
     PAGES = {1071--1147},
      ISSN = {1095-0761,1095-0753},
   MRCLASS = {57R57 (14J33 14N35 16G20 17B67 81T13)},
  MRNUMBER = {3952347},
MRREVIEWER = {Dave\ Auckly},
       DOI = {10.4310/ATMP.2018.v22.n5.a1},
       URL = {https://doi.org/10.4310/ATMP.2018.v22.n5.a1},
}

@article{BFN19,
    AUTHOR = {Braverman, A. and Finkelberg, M. and Nakajima,
              H.},
     TITLE = {Coulomb branches of {$3d$} {$\mathcal{N}=4$} quiver gauge theories
              and slices in the affine {G}rassmannian},
      NOTE = {With two appendices by Braverman, Finkelberg, Joel Kamnitzer,
              Ryosuke Kodera, Nakajima, Ben Webster and Alex Weekes},
   JOURNAL = {Adv. Theor. Math. Phys.},
  FJOURNAL = {Advances in Theoretical and Mathematical Physics},
    VOLUME = {23},
      YEAR = {2019},
    NUMBER = {1},
     PAGES = {75--166},
      ISSN = {1095-0761,1095-0753},
   MRCLASS = {57R57 (14M15 16G20 17B81 81T13)},
  MRNUMBER = {4020310},
       DOI = {10.4310/ATMP.2019.v23.n1.a3},
       URL = {https://doi.org/10.4310/ATMP.2019.v23.n1.a3},
}

@incollection {GKLOquant,
    AUTHOR = {Gerasimov, A. and Kharchev, S. and Lebedev, D. and
              Oblezin, S.},
     TITLE = {On a class of representations of quantum groups},
 BOOKTITLE = {Noncommutative geometry and representation theory in
              mathematical physics},
    SERIES = {Contemp. Math.},
    VOLUME = {391},
     PAGES = {101--110},
 PUBLISHER = {Amer. Math. Soc., Providence, RI},
      YEAR = {2005},
      ISBN = {0-8218-3718-4},
   MRCLASS = {17B37 (20G42)},
  MRNUMBER = {2184015},
MRREVIEWER = {Fabio\ Gavarini},
       DOI = {10.1090/conm/391/07321},
       URL = {https://doi.org/10.1090/conm/391/07321},
}

@article{bartlett2025gklo,
  title={{GKLO} representations of twisted {Y}angians in type {AI} and quantizations of symmetric quotients of the affine {G}rassmannian},
  author={Bartlett, R. and Przezdziecki, T. and Tappeiner, L.},
  year={2025},
    JOURNAL = {preprint},
    note    = {\href{https://arxiv.org/abs/2510.12706}{arXiv:2510.12706}},
    URL = {https://arxiv.org/abs/2510.12706},
    EPRINT = {2510.12706},
}

@article {dong2024flag,
    AUTHOR = {Dong, Z. and Ma, H.},
     TITLE = {Flag varieties of type {B}/{C} and twisted {Y}angian},
   JOURNAL = {Algebra Colloq.},
  FJOURNAL = {Algebra Colloquium},
    VOLUME = {31},
      YEAR = {2024},
    NUMBER = {1},
     PAGES = {129--148},
      ISSN = {1005-3867,0219-1733},
   MRCLASS = {17B37},
  MRNUMBER = {4717572},
MRREVIEWER = {Aleksandr\ Panov},
       DOI = {10.1142/s1005386724000129},
       URL = {https://doi.org/10.1142/s1005386724000129},
}

@article {FMX22,
    AUTHOR = {Fan, Z. and Ma, H. and Xiao, H.},
     TITLE = {Equivariant {K}-theory approach to {$\imath$}-quantum groups},
   JOURNAL = {Publ. Res. Inst. Math. Sci.},
  FJOURNAL = {Publications of the Research Institute for Mathematical
              Sciences},
    VOLUME = {58},
      YEAR = {2022},
    NUMBER = {3},
     PAGES = {635--668},
      ISSN = {0034-5318,1663-4926},
   MRCLASS = {20G42 (16T20 19L47)},
  MRNUMBER = {4458549},
MRREVIEWER = {Eusebio\ Gardella},
       DOI = {10.4171/prims/58-3-6},
       URL = {https://doi.org/10.4171/prims/58-3-6},
}

@incollection {FT19,
    AUTHOR = {Finkelberg, M. and Tsymbaliuk, A.},
     TITLE = {Multiplicative slices, relativistic {T}oda and shifted quantum
              affine algebras},
 BOOKTITLE = {Representations and nilpotent orbits of {L}ie algebraic
              systems},
    SERIES = {Progr. Math.},
    VOLUME = {330},
     PAGES = {133--304},
 PUBLISHER = {Birkh\"auser/Springer, Cham},
      YEAR = {2019},
      ISBN = {978-3-030-23530-7; 978-3-030-23531-4},
   MRCLASS = {17B37 (81R10 81T13)},
  MRNUMBER = {3971731},
MRREVIEWER = {Kyungyong\ Lee},
       DOI = {10.1007/978-3-030-23531-4\_6},
       URL = {https://doi.org/10.1007/978-3-030-23531-4_6},
}

@article {GKLO05,
    AUTHOR = {Gerasimov, A. and Kharchev, S. and Lebedev, D. and Oblezin,
              S.},
     TITLE = {On a class of representations of the {Y}angian and moduli
              space of monopoles},
   JOURNAL = {Comm. Math. Phys.},
  FJOURNAL = {Communications in Mathematical Physics},
    VOLUME = {260},
      YEAR = {2005},
    NUMBER = {3},
     PAGES = {511--525},
      ISSN = {0010-3616,1432-0916},
   MRCLASS = {53C07 (17B37 53D17 53D30)},
  MRNUMBER = {2182434},
MRREVIEWER = {Olivier\ G.\ Schiffmann},
       DOI = {10.1007/s00220-005-1417-3},
       URL = {https://doi.org/10.1007/s00220-005-1417-3},
}

@article {KWWY14,
    AUTHOR = {Kamnitzer, J. and Webster, B. and Weekes, A. and Yacobi,
              O.},
     TITLE = {Yangians and quantizations of slices in the affine
              {G}rassmannian},
   JOURNAL = {Algebra Number Theory},
  FJOURNAL = {Algebra \& Number Theory},
    VOLUME = {8},
      YEAR = {2014},
    NUMBER = {4},
     PAGES = {857--893},
      ISSN = {1937-0652,1944-7833},
   MRCLASS = {17B37 (14D24 14M15 20G15 53D55)},
  MRNUMBER = {3248988},
MRREVIEWER = {Christian\ Ohn},
       DOI = {10.2140/ant.2014.8.857},
       URL = {https://doi.org/10.2140/ant.2014.8.857},
}

@article{LWW,
    author = {Lu, K. and Wang, W. and Weekes, A.},
    title = {Shifted Twisted {Y}angians and affine {G}rassmannian islices},
    year={2025},
JOURNAL = {preprint},
note    = {\href{https://arxiv.org/abs/2510.10652}{arXiv:2510.10652}},
URL = {https://arxiv.org/abs/2510.10652},
    EPRINT = {2510.10652},
}

@article{lu2025shifted,
  title={Shifted twisted {Y}angians of quasi-split {ADE} types},
  author={Lu, K. and Wang, W. and Weekes, A.},
  year={2025},
  JOURNAL = {preprint}, note    = {\href{https://arxiv.org/abs/2512.19998}{arXiv:2512.19998}}, 
  URL = {https://arxiv.org/abs/2512.19998}, 
  EPRINT = {2512.19998},
}

@article{li2026gklorepresentationsshiftedquantum,
      title={{GKLO} representations for shifted quantum affine symmetric pairs},
  author={Li, J. and Przezdziecki, T.},
      year={2026},
    JOURNAL = {preprint},
    note    = {\href{https://arxiv.org/abs/2603.07250}{arXiv:2603.07250}},
    URL = {https://arxiv.org/abs/2603.07250},
    EPRINT = {2603.07250},
}

@article {NakCou1,
    AUTHOR = {Nakajima, H.},
     TITLE = {Towards a mathematical definition of {C}oulomb branches of
              3-dimensional {$\mathcal N=4$} gauge theories, {I}},
   JOURNAL = {Adv. Theor. Math. Phys.},
  FJOURNAL = {Advances in Theoretical and Mathematical Physics},
    VOLUME = {20},
      YEAR = {2016},
    NUMBER = {3},
     PAGES = {595--669},
      ISSN = {1095-0761,1095-0753},
   MRCLASS = {57R57 (14J33 14N35 16G20 17B67 81T13)},
  MRNUMBER = {3565863},
       DOI = {10.4310/ATMP.2016.v20.n3.a4},
       URL = {https://doi.org/10.4310/ATMP.2016.v20.n3.a4},
}

@article {SSX26,
    AUTHOR = {Shen, Y. and Su, C. and Xiong, R.},
     TITLE = {Quivers with involutions and shifted twisted {Y}angians via {C}oulomb Branches},
      Journal = {Comm. Math. Phys. (to appear)},
ARCHIVEPREFIX = {arXiv},
note    = {\href{https://arxiv.org/abs/2510.12118}{arXiv:2510.12118}},
URL = {https://arxiv.org/abs/2510.12118},
    EPRINT = {2510.12118},
    year={2026},
}

@article{SSX26b,
  title={Quantized {C}oulomb Branches of Separated Cotangent Type and Orthosymplectic Quivers},
  author={Shen, Y. and Su, C. and Xiong, R.},
  journal={arXiv preprint arXiv:2608.16091},
  year={2026}
}

@article{wang2025quivers,
  title={Quivers with involutions and shifted twisted {Y}angians via {C}oulomb branches {II}},
  author={Wang, Z.},
  year={2026},
  JOURNAL = {preprint}, note    = {\href{https://arxiv.org/abs/2601.00039}{arXiv:2601.00039}}, 
  URL = {https://arxiv.org/abs/2601.00039}, 
  EPRINT = {2601.00039},
}

@article{LWW26,
  title={Shifted affine iquantum groups of quasi-split {ADE} types},
  author={Lu, K. and Wang, W. and Weekes, A.},
  year={2026},
  JOURNAL = {preprint},
note    = {\href{https://arxiv.org/abs/2603.28446}{arXiv:2603.28446}},
URL = {https://arxiv.org/abs/2603.28446},
    EPRINT = {2603.28446},
}

@article{GV93,
    AUTHOR = {Ginzburg, V. and Vasserot, E.},
     TITLE = {Langlands reciprocity for affine quantum groups of type
              {$A_n$}},
   JOURNAL = {Internat. Math. Res. Notices},
  FJOURNAL = {International Mathematics Research Notices},
      YEAR = {1993},
    NUMBER = {3},
     PAGES = {67--85},
      ISSN = {1073-7928,1687-0247},
   MRCLASS = {17B37 (17B67)},
  MRNUMBER = {1208827},
MRREVIEWER = {Jie\ Du},
       DOI = {10.1155/S1073792893000078},
       URL = {https://doi.org/10.1155/S1073792893000078},
}

@article{Vas98,
  author={Vasserot, E.},
  title={Affine quantum groups and equivariant {$K$}-theory},
  journal={Transform. Groups},
  volume={3},
  number={3},
  year={1998},
  pages={269--299},
  doi={10.1007/BF01236876}
}

@article{Nak01,
  author={Nakajima, H.},
  title={Quiver varieties and finite-dimensional representations of quantum affine algebras},
  journal={J. Amer. Math. Soc.},
  volume={14},
  number={1},
  year={2001},
  pages={145--238},
  doi={10.1090/S0894-0347-00-00353-2}
}

@article{Var00,
  author={Varagnolo, M.},
  title={Quiver varieties and {Y}angians},
  journal={Lett. Math. Phys.},
  volume={53},
  number={4},
  year={2000},
  pages={273--283}
}

@article{Nak25,
  author={Nakajima, H.},
  title={Instantons on {ALE} spaces for classical groups, involutions on quiver varieties, and quantum symmetric pairs},
  journal={preprint},
  year={2025},
  note={\href{https://arxiv.org/abs/2510.13007}{arXiv:2510.13007}},
  eprint={2510.13007}
}

@article{SW26,
  author={Su, C. and Wang, W.},
  title={Affine {$\mathrm{i}$}quantum groups and {S}teinberg varieties of type {C}},
  journal={Selecta Math. (N.S.)},
  volume={32},
  year={2026},
  pages={50},
  doi={10.1007/s00029-026-01161-x}
}

@article{LSX26,
  author={Luo, L. and Su, C. and Xu, Z.},
  title={Affine {$\mathrm{i}$}quantum groups and {S}teinberg varieties of type {C}, {II}},
  journal={preprint},
  year={2026},
  note={\href{https://arxiv.org/abs/2605.09589}{arXiv:2605.09589}},
  eprint={2605.09589}
}

@article{SY26,
  author={Su, C. and Yang, Y.},
  title={Twisted {Y}angians and {S}teinberg varieties of type {C}},
  journal={preprint},
  year={2026},
  note={\href{https://arxiv.org/abs/2603.28084}{arXiv:2603.28084}},
  eprint={2603.28084}
}

@article {NP01,
    AUTHOR = {Ng\^o, B. C. and Polo, P.},
     TITLE = {R\'esolutions de {D}emazure affines et formule de
              {C}asselman-{S}halika g\'eom\'etrique},
   JOURNAL = {J. Algebraic Geom.},
  FJOURNAL = {Journal of Algebraic Geometry},
    VOLUME = {10},
      YEAR = {2001},
    NUMBER = {3},
     PAGES = {515--547},
      ISSN = {1056-3911,1534-7486},
   MRCLASS = {14F43 (14F17)},
  MRNUMBER = {1832331},
MRREVIEWER = {Nguy\cftil en Qu\^oc Th\'ang},
}

@incollection {Zhu17,
    AUTHOR = {Zhu, X.},
     TITLE = {An introduction to affine {G}rassmannians and the geometric
              {S}atake equivalence},
 BOOKTITLE = {Geometry of moduli spaces and representation theory},
    SERIES = {IAS/Park City Math. Ser.},
    VOLUME = {24},
     PAGES = {59--154},
 PUBLISHER = {Amer. Math. Soc., Providence, RI},
      YEAR = {2017},
      ISBN = {978-1-4704-3574-5},
   MRCLASS = {14M15 (14D24 20F65 22E57)},
  MRNUMBER = {3752460},
MRREVIEWER = {Felipe\ Zald\'ivar},
}

\end{document}